\newtheorem{theorem}{Theorem}[section]
\newtheorem{lemma}[theorem]{Lemma}
\newtheorem{corollary}[theorem]{Corollary}
\newtheorem{proposition}[theorem]{Proposition}
\newtheorem{example}[theorem]{Example}
\newtheorem{remark}[theorem]{Remark}
\newtheorem{definition}[theorem]{Definition}
\newcommand{\be }{\begin{eqnarray*}}
\newcommand{\ee }{\end{eqnarray*}}
\newcommand{\bE}{\mathbf{E}}
\newcommand{\bF}{\mathbf{F}}
\newcommand{\bH}{\mathbf{H}}
\newcommand{\cA}{\mathcal{A}}
\newcommand{\cB}{\mathcal{B}}
\newcommand{\cI}{\mathcal{I}}
\def\gpd{\,\lower1pt\hbox{$\longrightarrow$}\hskip-.24in\raise2pt
         \hbox{$\longrightarrow$}\,}
\begin{document}
	\title{
	{On  (co-)morphisms  of $n$-Lie-Rinehart algebras with applications to Nambu-Poisson manifolds} }
\author{Yanhui Bi\thanks{{E-mail:}biyanhui0523@163.com}\\
	{\footnotesize
		Center for Mathematical Sciences, College of Mathematics and Information Science,}
	\\
	{\footnotesize Nanchang Hangkong University,		Nanchang 330063, PR China}\\
	Zhixiong Chen\thanks{{E-mail:}chenzhixiong0908@foxmail.com}\\
	{\footnotesize
		College of Mathematics and Information Science, Nanchang Hangkong University,	Nanchang 330063, PR China}  \\
	Tao Zhang\thanks{{E-mail:} zhangtao@htu.edu.cn}\\
	{\footnotesize College of Mathematics, Henan Normal University, Xinxiang 453007, PR China}}	
\date{}
\footnotetext{The research is supported by the National Natural Science Foundation of China (NSFC)  grants 11961049(Bi and Zhang), and 11601219(Bi and Zhang), and by  the Key Project of Jiangxi Natural Science Foundation grant 20232ACB201004(Bi, Chen and  Zhang).}
%	\footnotetext{{\it{MSC}}:  17B99, 53D05, 53D17.}	
\maketitle
\begin{abstract}
	In this paper, we give a unified description of morphisms and comorphisms of $n$-Lie-Rinehart  algebras. We show that these morphisms and comorphisms   can be regarded as  two subalgebras of   the $\psi$-sum of $n$-Lie-Rinehart algebras. We also provide similar descriptions for morphisms and comorphisms of $n$-Lie algebroids.
	It is proved that the category of vector bundles with  Nambu-Poisson structures of rank $n$ and the category of their dual bundles with  $n$-Lie algebroid structures of rank $n$ are equivalent to each other.
\end{abstract}
Keywords: $n$-Lie-Rinehart  algebras; Leibniz-Rinehart algebras; morphisms;
comorphisms; Nambu-Poisson structures; $n$-Lie algebroids.
%\tableofcontents
\section{Introduction}
The  $n$-Lie-Rinehart  algebra is a generalized structure of   Lie-Rinehart  algebras. It consists of a quadruple $(\bE,[\cdotp,\cdots,\cdotp],\rho,\cA)$, %written by $n$-Lie-Rinehart  algebra $(\bE,[\cdotp,\cdots,\cdotp],\rho,)$ over $\cA$,
where $\cA$ is a commutative algebra,  $\bE$ is an $\cA$-module and $(\bE,[\cdotp,\cdots,\cdotp])$ is an $n$-Lie algebra,
$\rho: \wedge^{n-1} \bE\to \operatorname{Der}(\cA)$ (called the anchor of $\bE$),
satisfying some compatibility conditions, see Definition  \ref{def:n-lie-renhart}.
The concept of an $n$-Lie algebroid
is a generalization of  Lie algebroids, which carries an $n$-Lie-Rinehart  algebra structure  in the differential geometric  context.
In \cite{ref21}, the author give a category equivalence between  vector bundles with  Poisson structures  and their dual bundles with Lie algebroid structures. In general, there is no one-to-one correspondence between a linear Nambu-Poisson structure  on $E$ and  a Filippov $n$-algebroid structure on $E^*$, see \cite{ref3}. However, it is pointed out in \cite{BC} that there exists an one-to-one correspondence between a linear Nambu-Poisson structure of rank $n$ on $E$ and  a Filippov $n$-algebroid structure of rank $n$ on $E^*$. It is natural to ask whether there is a category equivalence between the category of  vector bundles with  Nambu-Poisson structures of rank $n$  and  the category of their dual bundles with   $n$-Lie algebroid structures of rank $n$.

In order to obtain the equivalent relation between vector bundles with  Nambu-Poisson structures  of rank $n$ and their dual bundles with  $n$-Lie algebroid structures of rank $n$, we define  morphisms and comorphisms of $n$-Lie algebroids. Using  morphisms and comorphisms of $n$-Lie algebroids, we prove the  category equivalence relationship  between them.	Thanks to the morphisms  and comorphisms of Lie-Rinehart  algebras introduced by Z. Chen and Z. J. Liu \cite{ref5}, we obtained the equivalence in this $n$-Lie-Rinehart  algebra setting.

Lie-Rinehart  algebras can be found in \cite{ref5,ref17,ref18}. The $n$-Lie algebra first introduced by Filippov \cite{ref9}. The $n$-Lie algebra is very different form Lie algebras by the multilinear $n$-bracket.
The $n$-Lie algebra \cite{ref2,ref9}, the Nambu-Poisson structure \cite{ref1,ref3,ref8,ref10,ref20,ref22,ref25} and the $n$-Lie algebroid  play important roles in mathematics and physics  \cite{ref10}. The $n$-Lie algebroid  first is defined by
Grabowski and Marmo \cite{ref10}. 	
Recently, Hassine e.t.  studied the representation, cohomology, abelian extension theory of $n$-Lie-Rinehart  algebras in \cite{ref11}.
In particular, in that paper, the homomorphism $\Psi: (\bE,\cA)\to (\bF,\cA)$ of $n$-Lie-Rinehart  algebras is over the same base algebra $\cA$. In this paper, we introduce the concepts of  morphisms  and comorpisms from $(\bE,\cA)$ to $(\bF,\cB)$ over the different base algebra which is not given  in \cite{ref11}.

The first main aim of this paper is to show that   morphisms  and comorpisms of $n$-Lie-Rinehart  algebras can be unified via restriction theory.
The conclusion is that both of them are subalgebras in an $n$-Lie-Rinehart algebra
called the $\psi$-sum of $n$-Lie-Rinehart algebras.
The second main aim of this paper is to show that there is a category equivalence between  vector bundles with  Nambu-Poisson structures of rank $n$ and their dual bundles with  $n$-Lie algebroid structures of rank $n$. We obtain the result that  morphisms (comorphisms) of $n$-Lie-Rinehart algebras can be seen as comorphisms (morphism) of $n$-Lie algebroids (see, Examples (\ref{example5.4}) and (\ref{example5.10})).
As applications, we also study Nambu-Poisson submanifolds using Nambu-Poisson relations and results obtained above.

The paper is organized as follows. In section 2, we recall the notions of $n$-Lie algebras, $n$-Lie algebroids and Nambu-Poisson manifolds.
In section 3, we recall the notions of $n$-Lie-Rinehart  algebras and Leibniz-Rinehart algebras.
We introduce the restricting $n$-Lie-Rinehart  algebras and Leibniz algebras. Applying this process, we obtain the $\psi$-sum of $n$-Lie-Rinehart  algebras and Leibniz-Rinehart algebras.
In section 4, we give two kinds of morphisms of $n$-Lie-Rinehart  algebras. For the comorphism of  $n$-Lie-Rinehart  algebras, we give  Proposition \ref{prop4.6}. It provides a  relationship between comorphisms of $n$-Lie-Rinehart  algebras and differential operators of degree $n-1$ on $\wedge^{n-1}_\cA\bE^*_\cA$.  The principal objective in
this section is the proof of our main result (Theorem \ref{thero4.7}) in this paper. It provides a picture of the
relationship of the two different morphisms, where their graphs turn out to be two subalgebras of the $\psi$-sum
with respect to a given algebra morphism $\psi$.
%Moreover, we give the proof of a fact that either with morphism or comorphism, the $n$-Lie-Rinehart algebras algebras form a category.
In section 5, we recall the notion of comorphisms of vector bundles, and then we give the definition of comorphisms of $n$-Lie algebroids. We give a category equivalence between $\mathcal{VB}_{Nambu}$  and $\mathcal{LA^\vee}$, see Theorem \ref{theo5.5}.
In Section 6,  we introduce the notion of Nambu-Poisson submanifolds. We recall the coisotropic submanifold of Nambu-Poisson manifolds. As an application of coisotropic submanifolds, we give the notion of Nambu-Poisson relations. We use the Nambu-Poisson relation to prove the fact that there is a category equivalence between $\mathcal{VB}^\vee_{Nambu}$  and $\mathcal{LA}$, see Theorem \ref{theo6.10}.

\section{Preliminaries}
In this section, we recall  the definitions and some notations of $n$-Lie algebras, Leibniz-Rinehart algebras, $n$-Lie algebroids and Nambu-Poisson manifolds. Let $\mathcal{A}$ is a  commutative associative algebra over $K$, where $K$ is the number
field $\mathbb{R}$ or $\mathbb{C}$.
\begin{definition}[\cite{ref9}]
	An $n$-Lie algebra is a vector space $V$ equipped with an $n$-ary totally skew-symmetric multilinear map (called the $n$-bracket) $[\cdotp,\cdots,\cdotp]: \wedge^{n-1}V \to V$ such that for all $X_1,\cdots,X_{n-1}$, $Y_1,\cdots,Y_n\in V$,
	\begin{equation}
		[X_1,\cdotp \cdotp\cdotp,X_{n-1},[Y_1,\cdotp\cdotp \cdotp,Y_n]]=\sum_{i=1}^{n}[Y_1,\cdots,Y_{i-1},[X_1,\cdotp \cdotp\cdotp,X_{n-1},Y_i],Y_{i+1},\cdots,Y_n].
	\end{equation}
\end{definition}
The elements in $\wedge^{n-1}V$ are called fundamental elements. On $\wedge^{n-1}V$,  these is a new non-symmetric bracket $[\cdotp,\cdotp]_{\wedge^{n-1}V}$  given by
\begin{equation}\label{leibniz bracket}
	[x,y]_{\wedge^{n-1}V}=\sum_{i=1}^{n-1}Y_1\wedge \cdots Y_{i-1}\wedge [X_1,\cdots,X_{n-1},Y_i]\wedge Y_{i+1}\wedge \cdots \wedge Y_{n-1,}	
\end{equation}
for all $x=X_1\wedge \cdots \wedge X_{n-1}$ and $y=Y_1\wedge \cdots \wedge Y_{n-1}$.
This $(\wedge^{n-1}V,[\cdotp,\cdotp]_{\wedge^{n-1}V})$ is a Leibniz algebra, see \cite{ref25}.
\begin{definition}
	A representation of an $n$-Lie algebra $(V,[\cdotp,\cdots,\cdotp])$ on a vector space $W$
	is a multilinear map $\rho: \wedge^{n-1}V \to \operatorname{gl}(W)$ such that for all $X_1,\cdots,X_{n-1},Y_1,\cdots,Y_n\in V$, we have
	\begin{eqnarray}
		\notag[\rho(X_1,\cdots,X_{n-1}),\rho(Y_1,\cdots,Y_{n-1})]		&=&\sum_{i=1}^{n-1}\rho(Y_1,\cdots,Y_{i-1},[X_1,\cdotp \cdotp\cdotp,X_{n-1},Y_i],Y_{i+1},\cdots,Y_{n-1}),\\
		\\
		\notag\rho(X_1,\cdots,X_{n-2},[Y_1,\cdots,Y_n])	&=&\sum_{i=1}^{n}(-1)^{n-i}\rho(Y_1,\cdots,\widehat{Y_i},\cdots,Y_n)\circ \rho(X_1,\cdots,X_{n-2},Y_i).\\
	\end{eqnarray}
\end{definition}
\begin{definition}[\cite{ref11}]
	A Leibniz-Rinehart algebra over $\mathcal{A}$ is a tuple $(E,[\cdotp,\cdotp],\rho,\mathcal{A})$, where $\mathcal{A}$ is
	a  commutative algebra, $E$ is an $\mathcal{A}$-module, $[\cdotp,\cdotp]: E\times E\to
	E$ is a bilinear map
	and the anchor map $\rho: E\to Der(\mathcal{A})$ satisfying the following conditions:
	\begin{enumerate}
		\item[1)] The pair $(E,[\cdotp,\cdotp])$ is a Leibniz algebra;
		\item[2)]$\rho([X,Y])=\rho(X)\rho(Y)-\rho(Y)\rho(X)$;
		\item[3)]$\rho$(aX)=a$\rho(X)$;
		\item[4)] %Compatibility condition
		$[X,aY]=[X,Y]+\rho(X)(a)Y,$
	\end{enumerate}
	\noindent for all $a\in \mathcal{A},X,Y\in E$.
\end{definition}
\begin{definition}[\cite{ref10}]
	An $n$-Lie algebroid is a vector bundle $A \to M$ equipped with an
	$n$-bracket $[\cdotp,\cdots,\cdotp]$ on the section spaces  $\Gamma(A)$ of $A$ and a vector bundle map $\rho:\wedge^{n-1}A \to TM$ over	 a manifold $M$, called the anchor of the $n$-Lie algebroid, such that
	\begin{enumerate}
		\item[1)]$(\Gamma(A),[\cdotp,\cdots,\cdotp])$ is an $n$-Lie algebra;
		\item[2)]The anchor map  $\rho:\wedge^{n-1}A \to TM$ satisfies the following relations:
		\begin{enumerate}
			\item[(i)] \begin{eqnarray}
				[\rho(X_1, \cdots, X_{n-1}), \rho(Y_1, \cdots, Y_{n-1})]
				= \sum_{i}\rho(Y_1,\cdots, [X_1,\cdots,X_{n-1},Y_i], \cdots, Y_{n-1})
			\end{eqnarray}
			\item[(ii)]	\begin{eqnarray}
				[X_1,\cdots,X_{n-1},fX_n]=f[X_1,\cdots,X_{n-1},X_n]+\rho(X_1,\cdots, X_{n-1})(f)X_n
			\end{eqnarray}
			for all $X_i,Y_i\in \Gamma(A)$ and $f \in C^\infty(M)$.
		\end{enumerate}
	\end{enumerate}
\end{definition}
Obviously, an $n$-Lie algebroid over a point is an $n$-Lie algebra.
\begin{example}[\cite{ref10}]
	The tangent bundle $T\mathbb{R}^m\to \mathbb{R}^m$ has a structure of $n$-Lie algebroid uniquely determined by
	$$\left[\frac{\partial}{\partial x_{i_1}},\cdots,\frac{\partial}{\partial x_{i_n}}\right]=0$$
	and an anchor map $\rho: \wedge^{n-1}T\mathbb{R}^m\to T\mathbb{R}^m $ is given by $d{x_1}\wedge \cdots \wedge d{x_{n-1}}\otimes \frac{\partial}{\partial x_{1}}$, where $x_1,\cdots,x_n,\cdots,x_m$ are coordinates of $\mathbb{R}^m$.
\end{example}
\begin{remark}
	For $n>2$, the cotangent bundle $T^*M$ is not an $n$-Lie algebroid, since the Jacobi identity is satisfied only for closed 1-forms.
\end{remark}
\begin{definition}[\cite{ref22}]
	A Nambu-Poisson manifold is a smooth manifold $M$ equipped with a Nambu-bracket $\{\cdotp,\cdots,\cdotp\}$ of orede $n$ over $M$ such that it is an $n$-multilinear mapping
	\begin{eqnarray*}
		&\{\cdotp,\cdots,\cdotp\}:C^\infty(M)\times \cdots \times C^\infty(M) \to C^\infty(M)
	\end{eqnarray*}
	and satisfies the following relations:
	\begin{enumerate}
		\item[1.]  Skew-symmetric, $\{f_1,\cdots,f_n\}=sign(\sigma)\{f_{\sigma_1},\cdots,f_{\sigma_n}\},\hspace*{0.5em}\forall \sigma \in \sum_n$;
		\item[2.]Leibniz rule, $\{f_1,\cdots,gf_n\}=\{f_1,\cdots,f_n\}g+\{f_1,\cdots,g\}f_n$;
		\item[3.]Fundamental identity,
		$$\{f_1,\cdots,f_{n-1},\{g_1,\cdots,g_{n}\}\}=\sum_{i=1}^n\{g_1,\cdots,g_{i-1},\{f_1,\cdots,f_{n-1},g_i\},\cdots,g_n\}.$$
	\end{enumerate}
	Here $f_i,g_j,g\in C^\infty(M)$ and $\sum_n$ is a permutation group of $\{1,\cdots,n\}$.
\end{definition}
Note that Poisson manifolds are Nambu-Poisson manifolds of order 2.
\section{The $\psi$-sum of $n$-Lie-Rinehart algebras}
\subsection{$n$-Lie-Rinehart  algebras}
In the paper, we adopt the following notations from  \cite{ref5,ref11}.
\begin{definition} [\cite{ref11}]\label{def:n-lie-renhart}
	$(\mathbf{E},[\cdotp,\cdots,\cdotp],\rho, \cA)$ is called an $n$-Lie-Rinehart algebra  over $\mathcal{A}$, where
	$\mathbf{E}$ be an $\mathcal{A}$-module and an $n$-Lie algebra over $K$ with  a map $\rho:\wedge^{n-1}\mathbf{E}\to \operatorname{Der}(\mathcal{A})$ (called the anchor of $\mathbf{E}$) such that
	\begin{enumerate}
		\item[1.] The map $\rho$ is a representation of $(\mathbf{E},[\cdotp,\cdots,\cdotp])$ on $\cA$.
		\item[2.] The follows conditions are satisfied:
		\begin{eqnarray}
			\label{nlra01}
			{\rho(aX_1,X_2,\cdots,X_{n-1})}&=&a\rho(X_1,X_2,\cdots,X_{n-1}),\\		
			\label{nlra02}	{[X_1,\cdots,X_{n-1},aX_n]}&=&a[X_1,\cdots,X_{n-1},X_n]+\rho(X_1,\cdots, X_{n-1})(a)X_n,
		\end{eqnarray}
	\end{enumerate}
	for all $X_i\in \mathbf{E},a\in \mathcal{A}.$
\end{definition}

From the first equation \eqref{nlra01}, we get
$\rho(X_1,\cdots,aX_i,\cdots,X_{n-1})=(-1)^{i-1}\rho(aX_i,X_1,\cdots,X_{n-1})=(-1)^{i-1}a\rho(X_i,X_1,\cdots,X_{n-1})=a\rho(X_1,\cdots,X_i,\cdots,X_{n-1}),\hspace*{0.5em}\forall 1\leq i \leq n-1$.

We prefer to write
$$\rho(X_1,\cdots, X_{n-1})(a)=[X_1,\cdots,X_{n-1},a]:=(-1)^{n-i}[X_1,\cdots,X_{i-1},a,X_i,X_{i+1},\cdots,X_{n-1},],$$
then   the second Equation \eqref{nlra02} is equivalent to
$${[X_1,\cdots,X_{n-1},aX_n]}=a[X_1,\cdots,X_{n-1},X_n]+[X_1,\cdots,X_{n-1},a]X_n.$$

Let $(\bE,[\cdotp,\cdots,\cdotp],\rho)$ be an $n$-Lie-Rinehart  algebra and denote $\wedge^{n-1}_\cA \bE$ by $\mathcal{E}$. Define a linear map $\widehat{\rho}: \mathcal{E} \to \operatorname{Der}(\cA)$ by
$$\widehat{\rho}(x)=\rho(X_1,\cdots,X_{n-1}),$$
for $x=X_1\wedge \cdots \wedge X_{n-1}\in \bE$.
Obviously, $\mathcal{E}$ is also an $\cA$-module.
\begin{proposition}[\cite{ref11}]\label{prop 3.2}
	With the above notations, $(\mathcal{E},[\cdotp,\cdotp]_\mathcal{E} ,\widehat{\rho})$ is a Leibniz-Rinehart algebra over $\cA$,
	where the bracket is defined as in Equation $\eqref{leibniz bracket}$.
\end{proposition}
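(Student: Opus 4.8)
The plan is to verify the three axioms of a Leibniz-Rinehart algebra for the triple $(\mathcal{E},[\cdotp,\cdotp]_\mathcal{E},\widehat{\rho})$ directly, pulling back each required identity on $\mathcal{E}=\wedge^{n-1}_\cA\bE$ to the corresponding structural identity of the $n$-Lie-Rinehart algebra $(\bE,[\cdotp,\cdots,\cdotp],\rho)$. The key observation driving every computation is that the bracket $[\cdotp,\cdotp]_\mathcal{E}$ of \eqref{leibniz bracket} is a Leibniz-type derivation: on a decomposable element $x=X_1\wedge\cdots\wedge X_{n-1}$ it acts through the adjoint operator $[X_1,\cdots,X_{n-1},\cdot\,]$ extended as a derivation of the wedge product. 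I would first record this as a working lemma, so that subsequent manipulations become the statement that ``$\widehat{\rho}(x)$ is a derivation acting the same way.''

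First I would establish axiom 1), that $(\mathcal{E},[\cdotp,\cdotp]_\mathcal{E})$ is a Leibniz algebra; this is exactly the content attributed to \cite{ref25} in the paragraph following \eqref{leibniz bracket}, so it may simply be cited, but I would at minimum indicate that the Leibniz identity for $[\cdotp,\cdotp]_\mathcal{E}$ follows by expanding both sides on decomposables and applying the fundamental identity (1) of the $n$-Lie algebra term by term. Next I would verify axiom 2), the homomorphism property $\widehat{\rho}([x,y]_\mathcal{E})=\widehat{\rho}(x)\widehat{\rho}(y)-\widehat{\rho}(y)\widehat{\rho}(x)$. Unwinding the definitions, the right-hand side is the commutator $[\rho(X_1,\cdots,X_{n-1}),\rho(Y_1,\cdots,Y_{n-1})]$ in $\operatorname{Der}(\cA)$, while the left-hand side is $\rho$ applied to the Leibniz bracket; these coincide precisely by the first representation identity (2) in the definition of a representation of an $n$-Lie algebra, which is inherited since $\rho$ is a representation on $\cA$.

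For axiom 3), $\cA$-linearity $\widehat{\rho}(ax)=a\widehat{\rho}(x)$, I would invoke \eqref{nlra01} together with the remark immediately following Definition \ref{def:n-lie-renhart} that scalars can be pulled out of any slot of $\rho$; this makes $\widehat{\rho}$ well defined and $\cA$-linear on $\mathcal{E}$. Finally, for axiom 4), the Leibniz rule $[x,ay]_\mathcal{E}=a[x,y]_\mathcal{E}+\widehat{\rho}(x)(a)y$, I would expand $[x,ay]_\mathcal{E}$ on a decomposable $y$ by distributing the scalar $a$ into one wedge factor, apply the derivation property \eqref{nlra02} inside each bracket $[X_1,\cdots,X_{n-1},aY_i]$, and collect terms: the ``$a$ times the bracket'' contributions reassemble into $a[x,y]_\mathcal{E}$, while the anchor contributions $\rho(X_1,\cdots,X_{n-1})(a)=\widehat{\rho}(x)(a)$ factor out, yielding $\widehat{\rho}(x)(a)\,y$.

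The main obstacle I anticipate is the bookkeeping in axiom 4), where the single scalar $a$ multiplying $y$ must be tracked through the sum over the $n-1$ wedge positions in \eqref{leibniz bracket}: one must confirm that the anchor terms produced by \eqref{nlra02} in each summand combine to give exactly $\widehat{\rho}(x)(a)\,y$ rather than a spread-out sum of derivative terms across the different factors. Checking that these terms telescope correctly — using that $\widehat{\rho}(x)$ depends only on $x$ and not on the position $i$ — is the delicate step; the homomorphism identity in axiom 2) is conceptually the deepest point but is handed to us directly by the representation axiom, whereas axioms 1) and 3) are either cited or immediate.
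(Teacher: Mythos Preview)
Your proposal is correct and follows the only natural route: a direct verification of the Leibniz--Rinehart axioms by unwinding the definitions and invoking the structural identities of the underlying $n$-Lie--Rinehart algebra. The paper itself does not supply a proof of this proposition at all; it is simply quoted from \cite{ref11}, so there is nothing to compare against beyond noting that your argument is the standard one.

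Two minor comments. First, you speak of ``the three axioms'' but then (correctly) verify all four conditions in Definition~2.3; just a slip of the pen. Second, your anticipated obstacle in axiom~4) is milder than you suggest: if you place the scalar $a$ on a single wedge factor of $y$, say $ay=(aY_1)\wedge Y_2\wedge\cdots\wedge Y_{n-1}$, then in the sum \eqref{leibniz bracket} only the summand $i=1$ invokes \eqref{nlra02} and produces the single anchor term $\widehat{\rho}(x)(a)\,y$; the remaining summands already carry the prefactor $a$ and contribute directly to $a[x,y]_\mathcal{E}$. There is no telescoping across positions --- the derivative term appears exactly once.
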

%Here we  write $\widehat{\rho}(x)a=[x,a]_\mathcal{E}=[X_1,\cdots,X_{n-1},a]$.

Given an $n$-Lie-Rinehart  algebra $(\bE,[\cdotp,\cdots,\cdotp],\rho,\cA)$ and let $\cI\varsubsetneq \cA$ be an ideal of $\cA$. Then, we define
$\bE^\cI\subseteq \bE$ such that
\begin{equation*}
	\bE^\cI=\{X_i, i=1,\cdots, n-1|[X_1,\cdots,X_{n-1},\cI]\subseteq \cI\}, %\hspace*{0.5em}\forall X_1,\cdots,X_{n-1}\in \bE^\cI\},
\end{equation*}
which is exactly a submodule of $\bE$, and $(\bE^\cI,[\cdotp,\cdots,\cdotp],\cA)$ is an $n$-Lie-Rinehart subalgebra of $\bE$:
\begin{equation*}
	[\bE^\cI,\cdots, \bE^\cI]\subset\bE^\cI,
\end{equation*}

\begin{remark}
	Obviously, when $n=2$, $(\bE^\cI,[\cdotp,\cdotp],\cA)$ is a Lie-Rinehart  algebra.
\end{remark}
We  define $\mathcal{E}^\cI=\{x\in\mathcal{E}|[x,\cI]_\mathcal{E}\subset \cI \}$. Obviously, the subset $\mathcal{E}^\cI$ is a submodule of $\mathcal{E}$, and $(\mathcal{E}^\cI,[\cdotp,\cdotp])$  is a Leibniz-Rinehart subalgebra.

Now, let
$$\cI \bE:=\{\sum_{i}a_iX_i|a_i\in\cI,X_i\in E\}\subset \bE^\cI,$$
then we have $\cI \bE$ is an ideal of $\bE^\cI$.
Using the terminology of quotient module
allows us to extend the results of Z. Chen and Z. J. Liu \cite{ref5} from Lie-Rinehart  algebras to $n$-Lie-Rinehart  algebras.
\begin{lemma}
	The quotient $(\cA/\cI)$-module $\bE^\cI/(\cI \bE)$ is an $n$-Lie-Rinehart  algebra. %with structure from $(\bE,[\cdotp,\cdots,\cdotp],\rho)$.
\end{lemma}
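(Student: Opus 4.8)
The plan is to verify that the quotient module $\bE^\cI/(\cI\bE)$ inherits each piece of the $n$-Lie-Rinehart structure from $\bE^\cI$ by descending through the quotient. First I would show that the $n$-bracket descends: since $\cI\bE$ is an ideal of $\bE^\cI$, we have $[\bE^\cI,\cdots,\bE^\cI,\cI\bE]\subseteq \cI\bE$, so the bracket $[\cdotp,\cdots,\cdotp]$ is well-defined on equivalence classes $\bar X_i = X_i + \cI\bE$. The totally skew-symmetric multilinearity and the Filippov identity pass to the quotient automatically, since they are identities that hold in $\bE^\cI$ and the quotient map is linear. Thus $(\bE^\cI/(\cI\bE),[\cdotp,\cdots,\cdotp])$ is an $n$-Lie algebra over $K$.

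Next I would construct the descended anchor. The anchor $\rho:\wedge^{n-1}\bE^\cI\to \operatorname{Der}(\cA)$ does not immediately land in $\operatorname{Der}(\cA/\cI)$, so the key point is that for $X_i\in\bE^\cI$ the derivation $\rho(X_1,\cdots,X_{n-1})$ preserves $\cI$ by the very definition of $\bE^\cI$, hence induces a well-defined derivation $\bar\rho(\bar X_1,\cdots,\bar X_{n-1})$ of $\cA/\cI$. I would then check this is independent of the representatives: if some $X_i$ is replaced by $X_i + \sum_j a_j Y_j$ with $a_j\in\cI$, then by equation \eqref{nlra01} the anchor changes by terms of the form $a_j\rho(\cdots)$, which act as zero derivations on $\cA/\cI$ since $a_j\in\cI$; and replacing the argument $a\in\cA$ by $a+\cI$ changes the value only inside $\cI$. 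So $\bar\rho$ is well-defined on $\wedge^{n-1}_{\cA/\cI}(\bE^\cI/(\cI\bE))$.

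Finally I would verify the three defining axioms for the quotient. That $\bar\rho$ is a representation of the quotient $n$-Lie algebra on $\cA/\cI$ follows by reducing each of the two representation identities modulo $\cI$, using that $\rho$ is already a representation on $\cA$. The $(\cA/\cI)$-linearity \eqref{nlra01} and the Leibniz-type compatibility \eqref{nlra02} descend by applying the corresponding identities in $\bE^\cI$ to representatives and then reducing modulo $\cI$; here one uses that the module action $\bar a\cdot \bar X := \overline{aX}$ is well-defined precisely because $\cI\bE$ absorbs $\cI$ and $\bE^\cI$ is an $\cA$-submodule. The main obstacle is the well-definedness of the anchor: one must be careful that $\bar\rho$ is insensitive both to the choice of module representatives $X_i$ modulo $\cI\bE$ \emph{and} to the choice of function representatives $a$ modulo $\cI$, and these two checks are what force the definition of $\bE^\cI$ (the condition $[X_1,\cdots,X_{n-1},\cI]\subseteq\cI$) to be exactly the right one. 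Once well-definedness is secured, all remaining axioms are routine reductions of identities already known to hold in $\bE^\cI$.
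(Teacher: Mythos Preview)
Your proposal is correct and follows essentially the same approach as the paper: both arguments define the quotient bracket and anchor on representatives and then check well-definedness using the inclusions $[\bE^\cI,\cdots,\bE^\cI,\cI\bE]\subset\cI\bE$ and $[\bE^\cI,\cdots,\bE^\cI,\cI]\subset\cI$, after which the remaining axioms are inherited from $\bE^\cI$. The paper spells out the inclusion $[\bE^\cI,\cdots,\bE^\cI,\cI\bE]\subset\cI\bE$ via an explicit Leibniz-rule computation, whereas you invoke it as the already-stated fact that $\cI\bE$ is an ideal of $\bE^\cI$; this is a cosmetic difference only.
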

\begin{proof}
	By $\bE^\cI$ is a $\cA$-module and $\cA(\cI \bE)\subset \cI \bE$, $\cI \bE^\cI \subset \cI \bE$, we have that $\bE^\cI/(\cI \bE)$ is an $(\cA/\cI)$-module. Now we define the induced bracket by the following equations
	\begin{equation*}
		[\overline{X_1},\cdots,\overline{X_n}]:=\overline{[X_1,\cdots,X_n]},\hspace*{1em}[\overline{X_1},\cdots,\overline{a}]:=\overline{[X_1,\cdots,a]},
	\end{equation*}
	where $\overline{X}=X+\cI \bE$ and $\overline{a}=a+\cI$. Then, we need only to prove that they are well-defined. In fact, we have
	\begin{eqnarray*}
		&[\cI \bE,\cdots,\cI \bE,\cA]\subset \cI,
		\\
		&[\bE^\cI,\cdots,\bE^\cI,\cI]\subset \cI,
		\\
		&[\bE^\cI,\cdots,\bE^\cI,\cI\bE]\subset \cI\bE.
	\end{eqnarray*}
	The first two formulas are obvious. For the last formula, notice that if $X_1,\cdots,X_{n-1}\in \bE^\cI,aX_n\in \cI \bE$, where $a\in \cI,X_n\in \bE$, we have
	$$[X_1,\cdots,X_{n-1},aX_n]=a[X_1,\cdots,X_{n-1},X_n]+[X_1,\cdots,X_{n-1},a]X_n,$$
	where the two terms in the right hand side of the above equation is in  $\cI\bE$ since $[X_1,\cdots,X_{n-1},a]\in   \cI$.
	Therefore, we  obtain that $(\bE^\cI/(\cI \bE),\cA/\cI)$ inherits all structures of $(\bE,\cA)$.
\end{proof}
Let $\bE$ be an $\cA$-module and $\cI$ be an ideal of $\cA$. Then, $\bE/(\cI \bE)\cong \bE \otimes_\cA(\cA/\cI)$ as $\cA/\cI$-module, under the isomorphism $\sigma: \overline{X} \to X\otimes_\cA \overline{1}$. For the proof, see \cite{ref5}.

By the above results, we have the following Lemma.
\begin{lemma}\label{lemma1.4}
	Given an $n$-Lie-Rinehart algebra  $(\bE,\cA)$ and an ideal $\cI\subset \cA$. Then under the isomorphism $\sigma$ defined above, we have that $\bE^\cI/(\cI \bE)\cong \bE^\cI \otimes_\cA(\cA/\cI)$ and the latter has the induced $n$-Lie-Rinehart  algebra structure (over $\cA/\cI$) defined by
	\begin{eqnarray*}
		&[X_1\otimes_\cA \overline{a_1},\cdots,X_{n-1}\otimes_\cA \overline{a_{n-1}},\overline{a_n}]=\overline{a_1\cdots a_{n-1}}\overline{[X_1,\cdots,X_{n-1},a_n]}
		\\
		&[X_1\otimes_\cA \overline{a_1},\cdots,X_{n-1}\otimes_\cA \overline{a_{n-1}},X_n\otimes_\cA \overline{a_n}]=[X_1,\cdots,X_n]\otimes_\cA \overline{a_1\cdots a_n}
		\\
		&+\sum_{i=1}^{n}(-1)^{n-i}X_i\otimes_\cA \overline{a_1\cdots \widehat{a_i}\cdots a_n}\overline{[X_1,\cdots \widehat{X_i},\cdots,X_n,a_i]},
	\end{eqnarray*}
	for all $X_i\in \bE^\cI,a_i\in \cA$.
\end{lemma}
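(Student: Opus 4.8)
The plan is to establish the isomorphism $\bE^\cI/(\cI\bE)\cong\bE^\cI\otimes_\cA(\cA/\cI)$ first, and then transport the $n$-Lie-Rinehart structure on the quotient (whose existence is guaranteed by the previous Lemma) across this isomorphism to obtain the explicit formulas in the statement. The isomorphism itself is cited from \cite{ref5}: it is the canonical map $\sigma\colon\overline{X}\mapsto X\otimes_\cA\overline{1}$ coming from right-exactness of the tensor product, applied to the short exact sequence $\cI\to\cA\to\cA/\cI$ tensored with the flat-enough module $\bE^\cI$; restricting the general fact $\bE/(\cI\bE)\cong\bE\otimes_\cA(\cA/\cI)$ to the submodule $\bE^\cI$ gives what we need. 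So the only genuine work is to verify that the induced bracket, written in the quotient notation $[\overline{X_1},\cdots,\overline{X_n}]=\overline{[X_1,\cdots,X_n]}$, matches the displayed tensor-product formulas after applying $\sigma$.

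The key computational step is a bookkeeping exercise in pulling scalars $\overline{a_i}$ through the bracket using the Leibniz-type axiom \eqref{nlra02} together with its skew-symmetric consequences. First I would treat the mixed bracket $[X_1\otimes_\cA\overline{a_1},\cdots,X_{n-1}\otimes_\cA\overline{a_{n-1}},\overline{a_n}]$: under $\sigma$ this corresponds to $[\,\overline{a_1}\,\overline{X_1},\cdots,\overline{a_{n-1}}\,\overline{X_{n-1}},\overline{a_n}\,]$ in $\bE^\cI/(\cI\bE)$, and since the anchor is $\cA$-linear in each slot (equation \eqref{nlra01} and the remark following the definition), all the scalars factor out as $\overline{a_1\cdots a_{n-1}}$, leaving exactly $\overline{a_1\cdots a_{n-1}}\,\overline{[X_1,\cdots,X_{n-1},a_n]}$. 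Next I would handle the all-module bracket $[X_1\otimes_\cA\overline{a_1},\cdots,X_n\otimes_\cA\overline{a_n}]$: here one scalar, say the last $\overline{a_n}$, is pulled out by \eqref{nlra02}, producing a principal term $\overline{a_n}[X_1,\cdots,X_{n-1},X_n\text{-part}]$ plus an anchor-correction term $\overline{[X_1,\cdots,X_{n-1},a_n]}X_n$; iterating over all $n$ slots (and tracking the sign $(-1)^{n-i}$ that arises when moving $a_i$ past the remaining entries to reach the anchor position, exactly as in the sign convention introduced after Definition \ref{def:n-lie-renhart}) collects the $n$ correction terms into the summation $\sum_{i=1}^{n}(-1)^{n-i}X_i\otimes_\cA\overline{a_1\cdots\widehat{a_i}\cdots a_n}\,\overline{[X_1,\cdots\widehat{X_i},\cdots,X_n,a_i]}$ and the principal term into $[X_1,\cdots,X_n]\otimes_\cA\overline{a_1\cdots a_n}$.

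I expect the main obstacle to be the sign bookkeeping in the iterated expansion: the factor $(-1)^{n-i}$ in each correction term is precisely the sign encoding the transposition of the scalar argument $a_i$ into the $i$-th bracket slot under the convention $\rho(X_1,\cdots,X_{n-1})(a)=(-1)^{n-i}[X_1,\cdots,X_{i-1},a,X_i,\cdots,X_{n-1}]$, and one must apply the Leibniz rule consistently across all slots while keeping the skew-symmetry signs aligned. A secondary point requiring care is well-definedness, namely that each formula descends to the quotient; but this is already covered by the inclusions $[\bE^\cI,\cdots,\bE^\cI,\cI\bE]\subset\cI\bE$ and $[\cI\bE,\cdots,\cI\bE,\cA]\subset\cI$ proved in the preceding Lemma, so I would simply invoke those and not reprove them. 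Finally I would note that the $n$-Lie algebra axioms and the two anchor conditions transport automatically through the module isomorphism $\sigma$, since $\sigma$ is by construction $\cA/\cI$-linear and intertwines the two brackets, so no further verification of the defining identities of an $n$-Lie-Rinehart algebra is needed beyond matching the formulas.
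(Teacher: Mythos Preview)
Your proposal is correct and follows the same underlying approach as the paper: transport the quotient bracket through the cited isomorphism $\sigma$. The only difference is presentational---the paper observes that every element $X\otimes_\cA\overline{a}$ equals $(aX)\otimes_\cA\overline{1}$, so it suffices to record the bracket on elements with coefficient $\overline{1}$ (where the formulas are immediate), whereas you expand the general formulas directly via the Leibniz rule; both routes arrive at the same place with no essential difference in content.
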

\begin{proof}
	By the isomorphism map $\sigma$, we have that the image of $\bE^\cI/(\cI \bE)$ is exactly is $\bE^\cI \otimes_\cA (\cA/\cI)$. Therefore, we define the bracket in $\bE^\cI \otimes_\cA (\cA/\cI)$,simply given by
	\begin{eqnarray*}
		&[X_1\otimes_\cA \overline{1},\cdots,X_{n-1}\otimes_\cA \overline{1},\overline{a_n}]=\overline{[X_1,\cdots,X_{n-1},a_n]}
		\\
		&[X_1\otimes_\cA \overline{1},\cdots,X_{n-1}\otimes_\cA \overline{1},X_n\otimes_\cA \overline{1}]=[X_1,\cdots,X_n]\otimes_\cA \overline{1}.
	\end{eqnarray*}
\end{proof}
\begin{definition}\label{de3.6}
	We denote
	\begin{equation*}
		\bE_\cI=\bE^\cI\otimes_\cA(\cA/\cI)=\bE^\cI/(\cI \bE)
	\end{equation*}
	and call $(\bE_\cI,[\cdotp,\cdots,\cdotp]_{\bE_\cI},\cA/\cI)$ the $\cI$-restriction of an $n$-Lie-Rinehart  algebra $(\bE,[\cdotp,\cdots,\cdotp]_\bE,\cA)$ with respect to the ideal $\cI \subset \cA$.
\end{definition}
\subsection{The $\psi$-sum}
Similar as the fact that two Lie-Rinehart  algebras $(\bE,[\cdotp,\cdotp]_E,\cA)$ and $(\bF,[\cdotp,\cdotp]_F,\cB)$ define their direct sum to be an $\cA \otimes \cB$-module $(\bE \otimes \cB)\oplus (\cA \otimes \bF)$, we also define direct sum of two  $n$-Lie-Rinehart algebras $(\bE,[\cdotp,\cdots,\cdotp]_E,\cA)$ and $(\bF,[\cdotp,\cdots,\cdotp]_F,\cB)$   such that it is an $\cA \otimes \cB$-module $(\bE \otimes \cB)\oplus (\cA \otimes \bF)$. Then, we have the direct sum with an $n$-Lie-Rinehart algebra  structure.
\begin{proposition}
	$(\bE \otimes \cB)\oplus (\cA \otimes \bF)$ is an $n$-Lie-Rinehart  algebra, where the bracket and anchor map $\rho$ are given by the following rules:
	\begin{eqnarray*}
		&&\rho((X_1\otimes b_1 + a_1\otimes Y_1)\wedge\cdotp\cdotp\cdotp\wedge (X_{n-1}\otimes b_{n-1} + a_{n-1}\otimes Y_{n-1}))(a_n\otimes b_n)		\\
		&=&[X_1\otimes b_1 + a_1\otimes Y_1,\cdots,X_{n-1}\otimes b_{n-1} + a_{n-1}\otimes Y_{n-1},a_n\otimes b_n]\\
		&=&[X_1,\cdots,X_{n-1},a_n]_\bE\otimes b_1\cdots b_n+a_1\cdots a_n\otimes[Y_1,\cdots,Y_{n-1},b_n]_\bF;\\[1em]
		&&[X_1\otimes b_1 + a_1\otimes Y_1,\cdots,X_n\otimes b_n + a_n\otimes Y_n]		\\
		&=&[X_1,\cdots,X_n]_\bE\otimes b_1\cdots b_n+a_1\cdots a_n\otimes[Y_1,\cdots,Y_n]_\bF
		\\
		&&+\sum_{i=1}^{n}(-1)^{n-i}[X_1,\cdots \widehat{X_i},\cdots,X_n,a_i]_\bE\otimes b_1\cdots \widehat{b_i}\cdots b_nY_i
		\\
		&&+\sum_{i=1}^{n}(-1)^{n-i}a_1\cdots \widehat{a_i}\cdots a_nX_i\otimes[Y_1,\cdots \widehat{Y_i},\cdots,Y_n,b_i]_\bF,
	\end{eqnarray*}
	where $a_i\in \cA,b_i\in \cB, X_i\in \bE, Y_i\in \bF$.
\end{proposition}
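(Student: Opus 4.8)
The plan is to verify directly that the bracket and anchor in the statement satisfy all conditions of Definition~\ref{def:n-lie-renhart}, while exploiting one structural observation that keeps the bookkeeping under control. Each summand of $\bfD:=(\bE\otimes\cB)\oplus(\cA\otimes\bF)$ is itself an $n$-Lie-Rinehart subalgebra over $\cA\otimes\cB$: restricting the given formulas to $\bE\otimes\cB$ returns $[X_1,\dots,X_n]_\bE\otimes b_1\cdots b_n$, which is the base change of $(\bE,\cA)$ along $a\mapsto a\otimes 1$, with anchor $\rho_\bE(X_1,\dots,X_{n-1})\otimes\mathrm{id}$ rescaled by the $\cB$-factors, and symmetrically for $\cA\otimes\bF$. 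The crucial point is that derivations of $\cA\otimes\cB$ of the form $D\otimes\mathrm{id}$ (coming from $\cA$) commute with those of the form $\mathrm{id}\otimes D'$ (coming from $\cB$); this commutativity is precisely what makes the cross terms in the bracket close up, and it is the $n$-ary analogue of the phenomenon underlying the Lie-Rinehart direct sum of Chen and Liu~\cite{ref5}.

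First I would dispose of well-definedness and the routine axioms. Total skew-symmetry in the $n$ arguments holds termwise: the pure-$\bE$ and pure-$\bF$ contributions are skew by skew-symmetry of $[\cdot,\dots,\cdot]_\bE$ and $[\cdot,\dots,\cdot]_\bF$, and the two correction sums are antisymmetrized by the $(-1)^{n-i}$ signs together with the omitted-index pattern. The $\cA\otimes\cB$-linearity of the anchor in each slot, condition \eqref{nlra01}, reduces to the $\cA$-linearity of $\rho_\bE$ and the $\cB$-linearity of $\rho_\bF$ in their first arguments. The Leibniz rule \eqref{nlra02} and the two representation identities then follow by expanding along the two summands and invoking the corresponding properties of $\rho_\bE$ and $\rho_\bF$, the point being that the $\cA$- and $\cB$-derivations underlying the anchor act on complementary tensor factors, so no mixed second-order terms appear.

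The main obstacle is the fundamental ($n$-Jacobi) identity for the bracket on $\bfD$. I would insert generic elements $X_j\otimes b_j+a_j\otimes Y_j$ into both $[X_1,\dots,X_{n-1},[Y_1,\dots,Y_n]]$ and $\sum_i[Y_1,\dots,[X_1,\dots,X_{n-1},Y_i],\dots,Y_n]$, expand, and sort the resulting terms by type (valued in $\bE\otimes\cB$, in $\cA\otimes\bF$, or scalar in $\cA\otimes\cB$). The pure-$\bE$ terms collapse to the fundamental identity for $(\bE,\cA)$ and the pure-$\bF$ terms to that for $(\bF,\cB)$, so these cancel. The genuine work is in matching the cross terms, where an $\bE$-anchor applied to a function of $\cA$ multiplies an $\bF$-bracket (and vice versa): these must be reorganized using the Leibniz rules \eqref{nlra02} of both algebras and the representation identities, and the doubly-mixed terms, in which one anchor from $\bE$ and one from $\bF$ act on the same element of $\cA\otimes\cB$, cancel exactly because the two derivations commute. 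The delicate part is the sign and index bookkeeping across the $(-1)^{n-i}$ factors and the hatted indices, and this is where an error is most likely; I would control it by checking the $n=2$ specialization against \cite{ref5} and, if helpful, by passing to the associated Leibniz-Rinehart description of Proposition~\ref{prop 3.2} so as to reduce the $n$-ary fundamental identity on $\bfD$ to the single Leibniz-Rinehart Jacobi identity on $\wedge^{n-1}$.
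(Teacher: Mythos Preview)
Your approach is essentially the same as the paper's: the paper states only that ``the proof of the above Proposition is by direct computations, so we omit the detail,'' and your proposal is precisely such a direct verification of the axioms in Definition~\ref{def:n-lie-renhart}. Your outline is more explicit than what the paper provides, and the structural observation about the commuting derivations on the two tensor factors is the right organizing principle for the cross terms in the fundamental identity.
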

The proof of the above Proposition  is by direct computations, so we omit the detail.

\begin{remark}
	All the lemmas  in subsection 3.1  hold for Leibniz-Rinehart algebras $(\mathcal{E},[\cdotp,\cdotp]_\mathcal{E},\cA)$. Therefore, given two Leibniz-Rinehart algebras $(\mathcal{E}=\wedge^{n-1}\bE,[\cdotp,\cdotp]_\mathcal{E},\widehat{\rho_\mathcal{E}})$ and $(\mathcal{F}=\wedge^{n-1}\bF,[\cdotp,\cdotp]_\mathcal{F},\widehat{\rho_\mathcal{F}})$, we  define their direct sum to be the $\cA \otimes \cB$-module $(\mathcal{E} \otimes \cB)\oplus (\cA \otimes \mathcal{F})$. Then, we have the direct sum with a new Leibniz-Rinehart algebra structure.
\end{remark}
\begin{proposition}
	$(\mathcal{E} \otimes \cB)\oplus (\cA \otimes \mathcal{F})$ is a Leibniz-Rinehart algebra, where the bracket  and anchor map $\rho$ are given by the following rules:
	\begin{eqnarray*}
		&&[x_1\otimes b_1 + a_1\otimes y_1,a_2\otimes b_2]\\
		&=&[x_1,a_2]_\mathcal{E}\otimes b_1b_2+a_1a_2\otimes[y_1,b_2]_\mathcal{F};\\[1em]
		&&[x_1\otimes b_1 + a_1\otimes y_1,x_2\otimes b_2 + a_2\otimes y_2]
		\\
		&=&[x_1,x_2]_\mathcal{E}\otimes b_1b_2+a_1a_2\otimes[y_1,y_2]_\mathcal{F}
		\\
		&&+[x_1,a_2]_\mathcal{E}\otimes b_1y_2+a_1x_2\otimes[y_1,b_2]_\mathcal{F}\\
		&&+ (-1)^{n-1}a_2x_1\otimes[y_2,b_1]_\mathcal{F}+		(-1)^{n-1}[x_2,a_1]_\mathcal{E}\otimes b_2y_1,
		%&&+[a_1, x_2]_\mathcal{E}\otimes b_2y_1+a_2x_1\otimes[b_1,y_2]_\mathcal{F}
	\end{eqnarray*}
	for all $a_i\in \cA,b_i\in \cB, x_i\in \mathcal{E}, y_i\in \mathcal{F}$.
\end{proposition}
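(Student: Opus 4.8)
The plan is to verify the four axioms of a Leibniz-Rinehart algebra for the $(\cA\otimes\cB)$-module $(\mathcal{E}\otimes\cB)\oplus(\cA\otimes\mathcal{F})$ directly, exactly paralleling the (omitted) verification for the $n$-Lie-Rinehart direct sum in the preceding Proposition. First I would read off the anchor from the first displayed formula: for $z_1=x_1\otimes b_1+a_1\otimes y_1$ the derivation $\rho(z_1)$ acts on a scalar $a_2\otimes b_2$ by $\rho(z_1)(a_2\otimes b_2)=[x_1,a_2]_\mathcal{E}\otimes b_1b_2+a_1a_2\otimes[y_1,b_2]_\mathcal{F}$, where $[x_1,a_2]_\mathcal{E}=\widehat{\rho_\mathcal{E}}(x_1)(a_2)$ and $[y_1,b_2]_\mathcal{F}=\widehat{\rho_\mathcal{F}}(y_1)(b_2)$. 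Because $\widehat{\rho_\mathcal{E}}(x_1)\in\operatorname{Der}(\cA)$ and $\widehat{\rho_\mathcal{F}}(y_1)\in\operatorname{Der}(\cB)$, this expression is a derivation of $\cA\otimes\cB$, so $\rho$ indeed takes values in $\operatorname{Der}(\cA\otimes\cB)$.

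I would then dispose of the three routine axioms. The $(\cA\otimes\cB)$-linearity $\rho((a\otimes b)z)=(a\otimes b)\rho(z)$ reduces to the $\cA$- and $\cB$-linearity of $\widehat{\rho_\mathcal{E}}$ and $\widehat{\rho_\mathcal{F}}$ together with the definition of the scalar action on the tensor product. The Leibniz rule relating the bracket to the anchor is obtained by comparing the full bracket formula against the anchor formula above and applying the corresponding rule already valid in $\mathcal{E}$ and in $\mathcal{F}$. The representation property $\rho([z_1,z_2])=\rho(z_1)\rho(z_2)-\rho(z_2)\rho(z_1)$ is verified by evaluating both sides on a test scalar $a_3\otimes b_3$ and invoking that $\widehat{\rho_\mathcal{E}}$ and $\widehat{\rho_\mathcal{F}}$ are representations in their respective factors; the cross terms match because the $\cA$-part and the $\cB$-part of the action decouple.

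The substantive step, and the one I expect to be the main obstacle, is the left-Leibniz identity $[z_1,[z_2,z_3]]=[[z_1,z_2],z_3]+[z_2,[z_1,z_3]]$ for arbitrary $z_i=x_i\otimes b_i+a_i\otimes y_i$. My strategy is to expand all three brackets and collect the result by type. The contributions in which the three inputs interact through the $\mathcal{E}$-bracket $[\cdot,\cdot]_\mathcal{E}$ reassemble, after extracting the scalar factors in $\cB$, into the Leibniz identity of $\mathcal{E}$, and symmetrically the contributions governed by $[\cdot,\cdot]_\mathcal{F}$ reduce to that of $\mathcal{F}$; these vanish because $\mathcal{E}$ and $\mathcal{F}$ are Leibniz algebras. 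The remaining mixed contributions — those coupling a bracket in one factor with the anchor action in the other and carrying the sign $(-1)^{n-1}$ — must cancel in pairs. Controlling these mixed terms is the delicate part of the computation: it requires repeated use of the Leibniz rule (to move the scalars $a_i,b_i$ past the brackets, each move producing an anchor contribution) and of the representation property (to rewrite nested anchor actions as brackets), and the factors $(-1)^{n-1}$ are exactly what is needed to reconcile the non-symmetric left-Leibniz bookkeeping across the two factors. Since every identity invoked is one already established for $\mathcal{E}$ and $\mathcal{F}$ individually, the verification introduces no new structural input and is of the same direct-computation character as the preceding Proposition, so the detailed bookkeeping can be organized exactly as there.
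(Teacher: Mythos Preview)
Your proposal is correct and follows essentially the same approach as the paper: the paper gives no proof for this proposition, having already declared of the analogous $n$-Lie-Rinehart direct sum that ``the proof \ldots\ is by direct computations, so we omit the detail,'' and your plan is precisely that direct verification of the Leibniz-Rinehart axioms, organized in the expected way.
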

There is a known fact that $\cB$-modules can be regarded as $\cA$-modules via an algebraic homomorphism $\psi: \cA \to \cB$. Then we have the following map
$$\widetilde{\psi}:\cA \otimes \cB \to \cB, \hspace*{1em}a\otimes b\mapsto \psi(a)b.$$
Assume that $\psi$ is surjective and $\cB\cong (\cA \otimes \cB)/\ker\widetilde{\psi}$
\begin{definition}
	Let $\bH=(\bE \otimes \cB)\oplus (\cA \otimes \bF)$ be a direct sum of two $n$-Lie-Rinehart algebras $(\bE,[\cdotp,\cdots,\cdotp]_E,\cA)$ and $(\bF,[\cdotp,\cdots,\cdotp]_F,\cB)$. Let $\cI=\ker\widetilde{\psi}$  be an ideal. We denote $(\bH_\cI,[\cdotp,\cdots,\cdotp]_{\bH_\cI},(\cA \otimes \cB)/\ker\widetilde{\psi}\cong \cB)$ by $(\bE\oplus_\psi \bF,[\cdotp,\cdots,\cdotp]_{\bE\oplus_\psi \bF},\cB)$ or simply $\bE\oplus_\psi \bF$ and  call it the $\psi$-sum of $(\bE,[\cdotp,\cdots,\cdotp]_\bE,\cA)$
	and $(\bF,[\cdotp,\cdots,\cdotp]_\bF,\cB)$ with respect to the morphism $\psi$.
\end{definition}
The $\psi$-sum $\bE\oplus_\psi \bF$ is a $\cB$-submodule of $(\bE \otimes_\cA \cB)\oplus \bF$. For the proof one can see \cite{ref5}.
\begin{remark}
	For the Leibniz-Rinehart algebra $(\mathcal{E} \otimes \cB)\oplus (\cA \otimes \mathcal{F})$, we denote  $(\mathcal{H}_\cI,[\cdotp,\cdot]_{\mathcal{H}_\cI},\widehat{\rho_{\mathcal{E} \oplus_\psi \mathcal{F}}},\cA \otimes \cB/\ker\widetilde{\psi}\cong \cB)$ by
	$\mathcal{E} \oplus_\psi \mathcal{F}$ and call it the $\psi$-sum of $(\mathcal{E},[\cdotp,\cdotp]_\mathcal{E},\widehat{\rho_\mathcal{E}},\mathcal{A})$ and $(\mathcal{F},[\cdotp,\cdotp]_\mathcal{F},\widehat{\rho_\mathcal{F}},\cB)$ with respect to the morphism $\psi$, where $\mathcal{H}=(\mathcal{E} \otimes \cB)\oplus (\cA \otimes \mathcal{F})$.
	%Moreover, an element $\sum_kX_k\otimes_\cA b_k+Y\in (\mathbf{E}\otimes_\cA \cB)\oplus \mathcal{F}$ if and only if
	%$\sum_k\psi([X_k,a])b_k=[Y,\psi(a)],\hspace*{0.5em}\forall a\in \cA.$
\end{remark}
By convention, we denote $\mathfrak{X}_j:=X_{(i_j)}\otimes b_{(i_j)}:=\sum_{i_j}X_{i_j}\otimes_\cA b_{i_j}\in \bE \otimes_\cA \cB,\hspace*{0.5em}\text{and}\hspace*{0.5em}\mathfrak{Y}_j:=a_{(i_j)}\otimes Y_{(i_j)} :=\sum_{i_j}a_{i_j}\otimes Y_{i_j}\in \cA\otimes\bF,\hspace*{0.5em}\forall j=1,\cdots, n$.
\begin{theorem}\label{theorem 1.7}
	An element $\mathfrak{X}_1+Y_1\in (\bE \otimes_\cA \cB)\oplus \bF$ belongs to $\bE\oplus_\psi \bF$ if and only if
	\begin{eqnarray}\label{eq1.1}
		\psi([X_{(i_1)},\cdots,X_{(i_{n-1})},a])b_{(i_1)}\cdots b_{(i_{n-1})}=[Y_1,\cdots,Y_{n-1},\psi(a)],
	\end{eqnarray}
	for all $a\in \cA$ and $\mathfrak{X}_2+\mathfrak{Y}_2,\cdots,\mathfrak{X}_{n-1}+\mathfrak{Y}_{n-1} \in H^\cI$.
\end{theorem}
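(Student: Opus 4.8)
The plan is to realize $\bE\oplus_\psi\bF$ as the image of $\bH^\cI$ under the natural projection, and then translate membership in $\bH^\cI$ into the anchor-compatibility condition \eqref{eq1.1}. Write $\bH=(\bE\otimes\cB)\oplus(\cA\otimes\bF)$ and let $p\colon\bH\to\bH\otimes_{\cA\otimes\cB}\cB=(\bE\otimes_\cA\cB)\oplus\bF$ be the projection induced by $\widetilde{\psi}\colon\cA\otimes\cB\to(\cA\otimes\cB)/\cI\cong\cB$, so that $\ker p=\cI\bH$. Since by Lemma \ref{lemma1.4} the map $\sigma$ embeds $\bE\oplus_\psi\bF=\bH^\cI/(\cI\bH)$ into $(\bE\otimes_\cA\cB)\oplus\bF$, restricting $p$ to $\bH^\cI$ gives $\bE\oplus_\psi\bF=p(\bH^\cI)$ (using $\cI\bH\subseteq\bH^\cI$). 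Given $\mathfrak{X}_1+Y_1$, any two $p$-preimages differ by an element of $\ker p=\cI\bH\subseteq\bH^\cI$; hence $\mathfrak{X}_1+Y_1\in\bE\oplus_\psi\bF$ if and only if the distinguished lift $Z_1:=\mathfrak{X}_1+1\otimes Y_1\in\bH$ lies in $\bH^\cI$.

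First I would unwind this last condition. By definition $Z_1\in\bH^\cI$ means $\rho(Z_1\wedge Z_2\wedge\cdots\wedge Z_{n-1})(\cI)\subseteq\cI$ for all $Z_2,\dots,Z_{n-1}\in\bH^\cI$, where $Z_j=\mathfrak{X}_j+\mathfrak{Y}_j$. Setting $D:=\rho(Z_1\wedge\cdots\wedge Z_{n-1})\in\operatorname{Der}(\cA\otimes\cB)$ and $F:=\widetilde{\psi}\circ D$, this is exactly $F|_\cI=0$. Because $D$ is a derivation and $\widetilde{\psi}$ is an algebra homomorphism, for $c\in\cI$ and $w\in\cA\otimes\cB$ one gets $F(cw)=\widetilde{\psi}(D(c))\widetilde{\psi}(w)+\widetilde{\psi}(c)\widetilde{\psi}(D(w))=F(c)\widetilde{\psi}(w)$, using $\widetilde{\psi}(c)=0$. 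Since $\psi$ is surjective, $\cI$ is generated as an ideal by the elements $g_a:=a\otimes 1-1\otimes\psi(a)$, $a\in\cA$; together with additivity of $F$ this shows $F|_\cI=0$ if and only if $F(g_a)=0$ for every $a\in\cA$.

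The heart of the argument is evaluating $F$ on the generators $g_a$ via the explicit anchor formula of the $\psi$-sum. Applying that formula, the contribution of $a\otimes 1$ uses only the $\bE$-slots and that of $1\otimes\psi(a)$ only the $\bF$-slots, while the ``cross'' terms vanish because a derivation annihilates the unit: $[X_1,\dots,X_{n-1},1]_\bE=0=[Y_1,\dots,Y_{n-1},1]_\bF$. With $\mathfrak{Y}_1=1\otimes Y_1$ this yields
\begin{eqnarray*}
F(g_a)=\psi([X_{(i_1)},\dots,X_{(i_{n-1})},a])\,b_{(i_1)}\cdots b_{(i_{n-1})}-[Y_1,\bar Y_2,\dots,\bar Y_{n-1},\psi(a)],
\end{eqnarray*}
where $\bar Y_j:=\psi(a_{(i_j)})Y_{(i_j)}\in\bF$ is the image of $\mathfrak{Y}_j$ under $p$; here I would use $\cB$-multilinearity of the $\bF$-anchor (Equation \eqref{nlra01}) to pull the scalars $\psi(a_{(i_j)})$ into the $Y$-slots. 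Requiring $F(g_a)=0$ for all $a$ is precisely the identity \eqref{eq1.1} (with $Y_2,\dots,Y_{n-1}$ read as $\bar Y_2,\dots,\bar Y_{n-1}$), which completes the equivalence.

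The main obstacle I anticipate is the bookkeeping in the last step: carrying the summation indices $(i_j)$ through the multilinear anchor formula and verifying that the two families of cross terms genuinely cancel, so that what survives is exactly the derivation-valued equality of \eqref{eq1.1} rather than a weaker relation. The reduction to ideal generators in the middle paragraph is what makes this manageable, since it replaces an \emph{a priori} condition quantified over all of $\cI$ by the single one-parameter family indexed by $a\in\cA$.
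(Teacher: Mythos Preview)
Your proposal is correct and follows essentially the same route as the paper: both reduce membership in $\bE\oplus_\psi\bF$ to membership of a lift in $\bH^\cI$, then test the condition $\widetilde{\psi}\circ\rho(Z_1\wedge\cdots\wedge Z_{n-1})\big|_{\cI}=0$ on explicit generators of $\cI$ and compute using the direct-sum anchor formula. The only cosmetic difference is that the paper tests directly on the additive generators $a\otimes b-1\otimes\psi(a)b$ (so only additivity of the derivation is needed), whereas you pass to the smaller family $g_a=a\otimes1-1\otimes\psi(a)$ of ideal generators and invoke the derivation identity $F(cw)=F(c)\widetilde{\psi}(w)$; both reductions are valid and lead to the same final computation.
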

\begin{proof}
	If $\mathfrak{X}_1+\mathfrak{Y}_1\in \bH^\cI$, for all $\mathfrak{X}_2+\mathfrak{Y}_2,\cdots,\mathfrak{X}_{n-1}+\mathfrak{Y}_{n-1}\in \bH^\cI$, we have $\widetilde{\psi}[\mathfrak{X}_1+\mathfrak{Y}_1,\cdots,\mathfrak{X}_{n-1}+\mathfrak{Y}_{n-1},\cI]=0$. An element $\mathfrak{X}_1+\mathfrak{Y}_1\in \bH^\cI$ if and only if
	\begin{eqnarray*}
		&&[\mathfrak{X}_1+\mathfrak{Y}_1,\cdots,\mathfrak{X}_{n-1}+\mathfrak{Y}_{n-1},a\otimes b-1\otimes \psi(a)b]\\
		&=&\psi([X_{(i_1)},\cdots,X_{(i_{n-1})},a]) b_{(i_1)}\cdots b_{(i_{n-1})}b
		+\psi(a_{(i_1)}\cdots a_{(i_{n-1})}a)[Y_{(i_1)},\cdots,Y_{(i_{n-1})},b]
		\\
		&&-\psi(a_{(i_1)}\cdots a_{(i_{n-1})}1)[Y_{(i_1)},\cdots,Y_{(i_{n-1})},\psi(a)b]
		\\
		&=&\psi([X_{(i_1)},\cdots,X_{(i_{n-1})},a]) b_{(i_1)}\cdots b_{(i_{n-1})}b
		-\psi(a_{(i_1)}\cdots a_{(i_{n-1})}1)[Y_{(i_1)},\cdots,Y_{(i_{n-1})},\psi(a)]b
		\\
		&=&\psi([X_{(i_1)},\cdots,X_{(i_{n-1})},a])b_{(i_1)}\cdots b_{(i_{n-1})}-[\psi(a_{(i_1)})Y_{(i_1)},\cdots,\psi(a_{(i_{n-1})})Y_{(i_{n-1})},\psi(a)])b\\
		&=&0
	\end{eqnarray*}
	holds for all $a\in \cA,b\in\cB$. The above equation shows that
	$$\psi([X_{(i_1)},\cdots,X_{(i_{n-1})},a]) b_{(i_1)}\cdots b_{(i_{n-1})}-[\psi(a_{(i_1)})Y_{(i_1)},\cdots,\psi(a_{(i_{n-1})})Y_{(i_{n-1})},\psi(a)]=0,$$
	for all $a\in \cA$ and $\mathfrak{X}_1+\mathfrak{Y}_1,\cdots,\mathfrak{X}_{n-1}+\mathfrak{Y}_{n-1}\in \bH^\cI$.
	
	On the other hand, by the definition of $\psi$-sum, we have
	\[
	\bE\oplus_\psi \bF=\bH^\cI\otimes_{\cA\otimes\cB}\cB\subset\bH\otimes_{\cA\otimes\cB}\cB\cong (\bE\otimes_\cA\cB)\oplus \bF.
	\]	
	Moreover, by Lemma \ref{lemma1.4} and Definition \ref{de3.6}, an element in $\bE\oplus_\psi \bF$ can be written in the following form:
	\[
	X_{(i_j)}\otimes b_{(i_j)}+\psi(a_{(i_j)})Y_{(i_j)}
	\]
	where $X_{(i_j)}\otimes b_{(i_j)}+a_{(i_j)}\otimes Y_{(i_j)}\in \bH^\cI$.
	The proof is finished.
\end{proof}
By the above Lemma \ref{lemma1.4}, it is easy to see  that the expressions of $n$-bracket of $\bE \oplus_\psi \bF$ can be given in the following proposition.
\begin{proposition}\label{prop 1.8}
	The structure maps of the $n$-Lie-Rinehart algebra  $(\bE\oplus_\psi \bF,[\cdotp,\cdots,\cdotp],\bE\oplus_\psi \bF)$ are given by
	\begin{eqnarray*}
		&&[\mathfrak{X}_1+Y_1,\cdots,\mathfrak{X}_{n-1}+Y_{n-1},b]\\
		&=&[Y_1,\cdots,Y_{n-1},b]_\bF;\\[1em]
		&&[\mathfrak{X}_1+Y_1,\cdots,\mathfrak{X}_{n}+Y_{n}]\\
		&=&[X_{(i_1)},\cdots,X_{(i_n)}]_\bE\otimes_\cA b_{(i_1)}\cdots b_{(i_n)}+[Y_1,\cdots,Y_n]_\bF		\\
		&&+\sum_{j=1}^{n}(-1)^{n+j}X_{(i_j)}\otimes_\cA[Y_1,\cdots,\widehat{Y_i},\cdots,b_{(i_j)}]_\bF,
	\end{eqnarray*}
	for all $\mathfrak{X}_1+Y_1,\cdots,\mathfrak{X}_n+Y_{n}\in \bE \oplus_\psi \bF,b\in \cB$.
\end{proposition}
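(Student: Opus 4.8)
The plan is to realize the $n$-bracket on $\bE\oplus_\psi\bF$ as the \emph{induced} bracket on the $\cI$-restriction of the direct sum $\bH=(\bE\otimes\cB)\oplus(\cA\otimes\bF)$, with $\cI=\ker\widetilde\psi$, and then simply to evaluate. Concretely, I would choose representatives $\xi_j=X_{(i_j)}\otimes b_{(i_j)}+a_{(i_j)}\otimes Y_{(i_j)}\in\bH^\cI$ whose images in $\bH_\cI$ are the elements $\mathfrak{X}_j+Y_j$ of Theorem \ref{theorem 1.7}, so that $Y_j=\psi(a_{(i_j)})Y_{(i_j)}$ after $\widetilde\psi$ is applied to the $\cA$-coefficients. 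I substitute them into the direct sum $n$-bracket of the $n$-Lie-Rinehart algebra $\bH$ and project along $a\otimes b\mapsto\psi(a)b$ and $a\otimes Y\mapsto\psi(a)Y$. By Lemma \ref{lemma1.4} and Definition \ref{de3.6}, this projection computes exactly $[\mathfrak{X}_1+Y_1,\cdots,\mathfrak{X}_n+Y_n]_{\bE\oplus_\psi\bF}$.

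The direct sum bracket produces four groups of terms, which I would treat separately. The purely $\bE$-valued term $[X_{(i_1)},\cdots,X_{(i_n)}]_\bE\otimes b_{(i_1)}\cdots b_{(i_n)}$ survives unchanged and yields the first summand of the claimed formula. The mixed term $\sum_i(-1)^{n-i}a_1\cdots\widehat{a_i}\cdots a_nX_{(i_i)}\otimes[Y_1,\cdots,\widehat{Y_i},\cdots,b_{(i_i)}]_\bF$, after moving the $\cA$-scalars across the tensor via $aX\otimes_\cA b=X\otimes_\cA\psi(a)b$ and then absorbing the factors $\psi(a_{(i_k)})$ back into the entries of the $\bF$-anchor by its $\cB$-linearity in the first $n-1$ slots (Equation \eqref{nlra01}), reassembles precisely into the last sum $\sum_j(-1)^{n+j}X_{(i_j)}\otimes_\cA[Y_1,\cdots,\widehat{Y_i},\cdots,b_{(i_j)}]_\bF$ of the statement (note $(-1)^{n+j}$ and $(-1)^{n-j}$ agree).

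The only substantial point is the remaining two groups: the $\bF$-bracket term $\psi(a_1\cdots a_n)[Y_1,\cdots,Y_n]_\bF$ together with the $\bE$-anchor term $\sum_i(-1)^{n-i}\psi([X_1,\cdots,\widehat{X_i},\cdots,a_i]_\bE)(b_1\cdots\widehat{b_i}\cdots b_n)Y_i$, which must collapse to the single clean summand $[Y_1,\cdots,Y_n]_\bF=[\psi(a_1)Y_1,\cdots,\psi(a_n)Y_n]_\bF$. I would first establish the Leibniz-type identity $[c_1Z_1,\cdots,c_nZ_n]_\bF=c_1\cdots c_n[Z_1,\cdots,Z_n]_\bF+\sum_i(-1)^{n-i}c_1\cdots\widehat{c_i}\cdots c_n[Z_1,\cdots,\widehat{Z_i},\cdots,c_i]_\bF\,Z_i$ for $c_k\in\cB$, a routine induction from total skew-symmetry and Equations \eqref{nlra01}--\eqref{nlra02}, and apply it with $c_k=\psi(a_{(i_k)})$. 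Its leading term matches $\psi(a_1\cdots a_n)[Y_1,\cdots,Y_n]_\bF$, and its $i$-th correction term must be identified with the $i$-th summand of the $\bE$-anchor term. This is where the defining constraint of the $\psi$-sum enters decisively: Equation \eqref{eq1.1} of Theorem \ref{theorem 1.7} equates $\psi([X_1,\cdots,\widehat{X_i},\cdots,a_i]_\bE)b_1\cdots\widehat{b_i}\cdots b_n$ with the $\bF$-anchor value $[\psi(a_1)Y_1,\cdots,\widehat{\psi(a_i)Y_i},\cdots,\psi(a_i)]_\bF$, which is exactly the needed correction. I expect this term-by-term matching through \eqref{eq1.1} to be the main obstacle, since the identity is false for arbitrary elements of $(\bE\otimes_\cA\cB)\oplus\bF$ and uses membership in $\bE\oplus_\psi\bF$ in an essential way.

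Finally, the anchor formula is the easy case. Applying the direct sum anchor of $\bH$ to $1\otimes b$ gives $[X_1,\cdots,X_{n-1},1]_\bE\otimes b_1\cdots b_{n-1}b+a_1\cdots a_{n-1}\otimes[Y_1,\cdots,Y_{n-1},b]_\bF$; the first term vanishes because $\rho_\bE(X_1,\cdots,X_{n-1})$ is a derivation and annihilates $1$, and projecting the second while pulling the $\psi(a_{(i_k)})$ into the anchor yields $[Y_1,\cdots,Y_{n-1},b]_\bF$, as claimed.
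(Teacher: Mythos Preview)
Your proposal is correct and follows exactly the route the paper indicates: the paper's entire argument is the single sentence ``By the above Lemma~\ref{lemma1.4}, it is easy to see\ldots'', i.e.\ compute the induced bracket on $\bH_\cI$ from the direct-sum bracket on $\bH$ and project. You have spelled out the one genuinely nontrivial step the paper suppresses, namely that the $\bF$-bracket term and the $\bE$-anchor correction terms combine into the clean $[Y_1,\ldots,Y_n]_\bF$ precisely via the Leibniz expansion together with the membership constraint \eqref{eq1.1}; this use of \eqref{eq1.1} is indeed essential and is why the formula holds only on $\bE\oplus_\psi\bF$ and not on all of $(\bE\otimes_\cA\cB)\oplus\bF$.
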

For Leibniz-Rinehart algebras, we have the following proposition:
\begin{proposition}
	The structure maps of the Leibniz-Rinehart algebra  $(\mathcal{E}\oplus_\psi \mathcal{F},[\cdotp,\cdotp],\cB)$ are given by
	\begin{eqnarray*}
		&&[\sum_{i}x_{i}\otimes_\cA b_{i}+y_1,b]=[y_1,b]_{\mathcal{F}};		\\
		&&[\sum_{i}x_{i}\otimes_\cA b_{i}+y_1,\sum_{j}x_{j}\otimes_\cA b_{j}+y_2]=\sum_{i} \sum_{j}[x_{i},,x_{j}]_\mathcal{E}\otimes_\cA b_{i} b_{j}+[y_1,y_2]_\mathcal{F}		\\
		&&+\sum_jx_{j}\otimes_\cA[y_1,b_j]_\mathcal{F}+(-1)^{n-1}\sum_ix_{i}\otimes_\cA[y_2,b_i]_\mathcal{E},
	\end{eqnarray*}
	for all $\sum_{i}x_{i}\otimes_\cA b_{i}+y_1,\cdots,\sum_{j}x_{j}\otimes_\cA b_{j}+y_{2}\in \mathcal{E} \oplus_\psi \mathcal{F},b\in \cB$.
\end{proposition}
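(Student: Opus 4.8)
The plan is to read off both structure maps from the direct-sum Leibniz-Rinehart bracket on $\mathcal{H}=(\mathcal{E}\otimes\cB)\oplus(\cA\otimes\mathcal{F})$ by pushing it through the restriction procedure, exactly as Proposition \ref{prop 1.8} is obtained in the $n$-Lie-Rinehart case. By the Remark that every result of subsection 3.1 carries over verbatim to Leibniz-Rinehart algebras, the quotient $\mathcal{H}_\cI=\mathcal{H}^\cI/(\cI\mathcal{H})$ already carries a well-defined Leibniz-Rinehart bracket over $\cB\cong(\cA\otimes\cB)/\ker\widetilde{\psi}$; what remains is to evaluate this induced bracket on elements written in normal form.

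First I would normalize representatives. Mirroring the proof of Theorem \ref{theorem 1.7} and the Leibniz-Rinehart form of Lemma \ref{lemma1.4}, I claim every class in $\mathcal{E}\oplus_\psi\mathcal{F}$ has a representative in $\mathcal{H}^\cI$ whose $\cA\otimes\mathcal{F}$-component has the shape $1\otimes y$. The point is the congruence $a\otimes y\equiv 1\otimes\psi(a)y\pmod{\cI\mathcal{H}}$, which holds because $a\otimes 1-1\otimes\psi(a)$ lies in $\cI=\ker\widetilde{\psi}$ (indeed $\widetilde{\psi}(a\otimes 1-1\otimes\psi(a))=\psi(a)-\psi(a)=0$) and acts on $1\otimes y$ to give precisely $a\otimes y-1\otimes\psi(a)y$. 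Hence a class may be represented by $\sum_i x_i\otimes b_i+1\otimes y$, identified via $\sigma$ with $\sum_i x_i\otimes_\cA b_i+y\in(\mathcal{E}\otimes_\cA\cB)\oplus\mathcal{F}$.

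Next I would compute. Writing a representative as $\xi+\eta$ with $\xi=\sum_i x_i\otimes b_i\in\mathcal{E}\otimes\cB$ and $\eta=1\otimes y\in\cA\otimes\mathcal{F}$, bilinearity splits each bracket into four pieces, each evaluated by the displayed direct-sum formula and then reduced modulo $\cI\mathcal{H}$ via $\widetilde{\psi}$. The decisive simplification is that the anchor of $\mathcal{E}$ takes values in $\operatorname{Der}(\cA)$, so $[x,1]_\mathcal{E}=\widehat{\rho_\mathcal{E}}(x)(1)=0$ for every $x$. Consequently every term in which the $\cA$-slot has been normalized to the unit and paired through an $\mathcal{E}$-bracket vanishes, killing the two cross terms $[x_1,a_2]_\mathcal{E}\otimes b_1y_2$ and $(-1)^{n-1}[x_2,a_1]_\mathcal{E}\otimes b_2y_1$; the term $1\otimes[y_1,y_2]_\mathcal{F}$ maps to $[y_1,y_2]_\mathcal{F}$, while $x_2\otimes[y_1,b_2]_\mathcal{F}$ and $(-1)^{n-1}x_1\otimes[y_2,b_1]_\mathcal{F}$ survive. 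Summing over the indices reproduces the second formula; for the anchor formula, applying the same vanishing to the bracket with $1\otimes b$ leaves only $1\otimes[y,b]_\mathcal{F}\mapsto[y,b]_\mathcal{F}$.

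I expect no genuine obstacle here, only two points deserving care: the well-definedness of the induced bracket, which is already supplied by the restriction construction, and the normalization of representatives above, whose justification rests on the single congruence $a\otimes y\equiv 1\otimes\psi(a)y$. Once these are in hand the remainder is the routine bilinear bookkeeping indicated, entirely parallel to Proposition \ref{prop 1.8}.
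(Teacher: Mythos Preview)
Your proposal is correct and is precisely the approach the paper intends: the paper does not give an independent proof of this proposition but treats it as the Leibniz-Rinehart analogue of Proposition \ref{prop 1.8}, obtained from the direct-sum bracket via Lemma \ref{lemma1.4} and the restriction procedure (cf.\ the Remark that all of subsection 3.1 carries over). Your normalization $a\otimes y\equiv 1\otimes\psi(a)y$ and the key vanishing $[x,1]_\mathcal{E}=\widehat{\rho_\mathcal{E}}(x)(1)=0$ are exactly the details that make the computation go through, and they fill in what the paper leaves implicit.
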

Let $(E_1,M_1,\rho_1,[\cdotp,\cdots,\cdotp]_1)$ be an $n$-Lie algebroid with the anchor map $\rho_1:\wedge^{n-1}E_1\to TM_1$.
%and the $n$-Lie algebra structure $[\cdotp,\cdots,\cdotp]_1$ on $\Gamma(E_1)$.
We will denote the tangent vector $\rho
_1(X_1\wedge\cdots \wedge X_{n-1})$, $X_i\in E_1$ by $[X_1,\cdots,X_{n-1},\cdotp]$.
\begin{example}
	Given an $n$-Lie algebroid $(E_1,M_1,\rho_1,[\cdotp,\cdots,\cdotp]_1)$ and an embedded submanifold $M_2$. Let $i: M_2\to M_1$ be the inclusion. Consider the algebra $\cA=C^\infty(M_1)$ and its ideal $\cI=\ker i^*=\{f\in \cA|f|_{M_2}=0\}$. Denoting $\bE=\Gamma(E_1)$, we  obtain an $n$-Lie-Rinehart algebra $(\bE,[\cdotp,\cdots,\cdotp]_\bE,\rho_1)$ over $\cA$. Then, we want to find the restriction of the $n$-Lie-Rinehart algebra  $(\bE,[\cdotp,\cdots,\cdotp]_\bE,\rho_1)$ with respect to $\cI$. Obviously,
	for all $\sigma_1,\cdots,\sigma_{n-1}\in \Gamma(E_1)\cap\bE^\cI$, we have
	$$\rho_1(\sigma_1\wedge \cdots \wedge \sigma_{n-1})|_{M_2}\in TM_2.$$
	By using the definition of $\cI \bE$ and $\cI$, we have
	$$\cI \bE=\{\sigma\in\Gamma(E_1)|\sigma|_{M_2}=0\}.$$
	Therefore, the quotient algebra $\bE_\cI$ can be regarded as a subspace of the spaces of sections $E_{1,M_2}:=\{e\in E_{1,p}|p\in M_2\}$ such that
	$$\rho_1(e_1\wedge \cdots \wedge e_{n-1})\in T_pM_2,\hspace*{0.5em}\forall e_1,\cdots,e_{n-1}\in \bE_\cI$$
	as an $\cA/\cI\cong C^\infty(M_2)$-module.
\end{example}
\begin{example}
	Given two $n$-Lie algebras $\bE$ and $\bF$ over $K$. The only $K$-algebraic homomorphism from $K$ to $K$ is the identity map $id$, and the $id$-sum of $(\bE,[\cdotp,\cdots,\cdotp]_\bE,K)$ and $(\bF,[\cdotp,\cdots,\cdotp]_\bF,K)$ is exactly $(\bE\oplus \bF,[\cdotp,\cdots,\cdotp]_{\bE\oplus \bF},K)$.
\end{example}
\begin{example}
	Given an $n$-Lie-Rinehart algebra  $(\bE,[\cdotp,\cdots,\cdotp]_\bE,\rho_\bE)$ over $\cA$ and an $n$-Lie algebra $\bF$ over $K$. Let $\psi:\cA \to K$ is an algebraic homomorphism. The $\psi$-sum of $\bE$ and $\bF$ is the set $\bE\oplus_\psi \bF=\{X\otimes_\cA 1+Y|X \in E,Y \in F\}$ such that
	$$[X_1,\cdots,X_{n-1},a]\in \ker\psi, \hspace*{0.5em}\forall X_1\otimes_\cA 1+Y_1,\cdots,X_{n-1}\otimes_\cA 1+Y_{n-1}\in \bE\oplus_\psi \bF,a\in \cA.$$
\end{example}
\begin{example}
	Let $(E_1,M,\rho_1,[\cdotp,\cdots,\cdotp]_1)$ and $(E_2,N,\rho_2,[\cdotp,\cdots,\cdotp]_2)$ be two $n$-Lie algebroids. The direct sum is a bundle $E_1\times E_2$ over $M\times N$, with a bundle map $(\sigma_p,\tau_q)\to (p,q)$, where $(\sigma_p,\tau_q),\sigma_p\in E_{1,p},\tau_q\in E_{2,q}$. We want to find the $n$-Lie algebroid structure of the sum direct $E_1\times E_2$. The anchor map is $$((\sigma_1)_p\wedge \cdots \wedge (\sigma_{n-1})_p,(\tau_1)_q\wedge \cdots \wedge (\tau_{n-1})_q) \to (\rho_1((\sigma_1)_p\wedge \cdots \wedge (\sigma_{n-1})_p),\rho_2((\tau_1)_q\wedge \cdots \wedge (\tau_{n-1})_q)).$$
	For some sections on $E_1\times E_2$, we define the $n$-Lie bracket to be
	\begin{eqnarray*}
		&&[(f_1\sigma_1,g_1\tau_1),\cdots,(f_n\sigma_n,g_n\tau_n)]\\
		&=&(f_1\cdots f_n[\sigma_1,\cdots,\sigma_n]_1+\sum_{i=1}^{n}(-1)^{n-i}g_1\cdots \widehat{g_i}\cdots g_n[\tau_1,\cdots,\widehat{\tau_i},\cdots \tau_n,f_i]_1\sigma_i,		\\
		&&g_1\cdots g_n[\tau_1,\cdots,\tau_n]_2+\sum_{i=1}^{n}(-1)^{n-i}f_1\cdots \widehat{f_i}\cdots f_n[\sigma_1,\cdots,\widehat{\sigma_i},\cdots \sigma_n,f_i]_2\tau_i).
	\end{eqnarray*}
	Here $f_i\in C^\infty(N),g_i\in C^\infty(M),\sigma_i\in \Gamma(E_1),\tau_i\in \Gamma(E_2)$. We regard $C^\infty(M\times N)\cong C^\infty(M)\otimes C^\infty(N)$ and $\Gamma(E_1\times E_2)$ as the $C^\infty(M\times N)$-module $\Gamma(E_1)\otimes C^\infty(N)\oplus \Gamma(E_2)\otimes C^\infty(M)$.
\end{example}
Consider a smooth map $\phi:M \to N$, one has its graph
\begin{equation}
	Gr(\phi)=\{(p,\phi(p))|p\in M\}\subset M\times N.
\end{equation}
Therefore, the $\phi$-sum of $E_1$ and $E_2$ is the set
\begin{equation}
	E_1\oplus_\phi E_2=\{((\sigma)_p,(\tau)_{\phi(p)})\in E_1\oplus \phi^!E_2|\forall p\in M,(\sigma)_p\in(E_1)_p,(\tau)_{\phi(p)}\in (E_2)_{\phi(p)}\}
\end{equation}
such that
\begin{equation}
	\phi_*\circ \rho_1((\sigma_1)_p\wedge \cdots (\sigma_{n-1})_p)=\rho_2((\tau_1)_{\phi(p)}\wedge \cdots (\tau_{n-1})_{\phi(p)}),
\end{equation}
for all $((\sigma_i)_p,(\tau_i)_{\phi(p)})\in E_1\oplus_\phi E_2,p\in M.$
\section{Morphisms and Comorphisms of $n$-Lie-Rinehart algebras }
%This section describes the two kinds of morphisms of $n$-Lie-Rinehart  algebras. They  provide a picture of the relationship of the two different morphisms.
%It is provided in \cite{ref11} of homomorphism of $n$-Lie-Rinehart  algebras (or $n$-Lie-Rinehart algebras algebra).
%Similar to the Lie-Rinehart  algebras, we define the two kinds morphisms.
In this section, we introduce the concepts of morphisms and comorphisms of $n$-Lie-Rinehart  algebras.
The first one is a generalization of the homomorphism of $n$-Lie-Rinehart  algebras defined in \cite{ref11}.
\begin{definition}\label{defini 2.1}
	Given two $n$-Lie-Rinehart  algebras  $(\bE,[\cdotp,\cdots,\cdotp]_\bE,\cA)$ and $(\bF,[\cdotp,\cdots,\cdotp]_\bF,\cB)$. A morphism of $n$-Lie-Rinehart  algebras from $\bE$ to $\bF$ is a pair  $(\bE,[\cdotp,\cdots,\cdotp]_\bE,\cA)\stackrel{{(\Psi,\psi)}}\rightrightarrows (\bF,[\cdotp,\cdots,\cdotp]_\bF,\cB)$ such that
	\begin{enumerate}
		\item[1)]
		\begin{equation}\label{eq2.1}
			\psi([X_1,\cdots,X_{n-1},a]_\bE)=[\Psi(X_1),\cdots,\Psi(X_{n-1}),\psi(a)]_\bF,\hspace*{0.5em}\forall X_1,\cdots,X_{n-1}\in \bE,a\in \cA;
		\end{equation}
		\item[2)]
		\begin{equation}\label{morphism defin 2}
			\Psi([X_1,\cdots,X_{n}]_\bE)=[\Psi(X_1),\cdots,\Psi(X_{n})]_\bF,\hspace*{0.5em}\forall X_1,\cdots,X_{n}\in \bE.
		\end{equation}
	\end{enumerate}
	Here $\psi:\cA \to \cB$ is an algebraic homomorphism and $\Psi:\bE \to \bF$ is a map of $\cA$-modules (considering $\cB$-module as $\cA$-modules through $\psi$). In particular, if both $\Psi$ and $\psi$ are injective, we call  $(\bE,[\cdotp,\cdots,\cdotp]_\bE,\cA)$ a subalgebra of $(\bF,[\cdotp,\cdots,\cdotp]_\bF,\cB)$.
\end{definition}
\begin{definition}\label{equivalent morphism}
	Given two $n$-Lie-Rinehart algebras  $(\bE,[\cdotp,\cdots,\cdotp]_\bE,\cA)$ and $(\bF,[\cdotp,\cdots,\cdotp]_\bF,\cB)$. A comorphism of $n$-Lie-Rinehart algebras  from $\bF$ to $\bE$ is a pair  $(\bF,[\cdotp,\cdots,\cdotp]_\bF,\cB)\stackrel{{(\Psi,\psi)}}\rightleftarrows (\bE,[\cdotp,\cdots,\cdotp]_\bE,\cA)$ such that
	\begin{enumerate}
		\item[1)]
		\begin{equation}\label{comorphism equiva}
			[Y_1,\cdots,Y_{n-1},\psi(a)]_\bF=\sum_{k_1}\cdots \sum_{k_{n-1}}b_{k_1}\cdots b_{k_{n-1}}\psi[X_{k_1},\cdots,X_{k_{n-1}},a]_\bE,\hspace*{0.5em}\forall a\in \cA,
		\end{equation}
		%for all $Y_1,\cdots,Y_{n-1}$ and $\Psi(Y_i)=\sum_{k_i}X_{k_i}\otimes_\cA b_{k_i}$, where $1\leq i \leq n-1,X_{k_i}\in \bE,b_{k_i}\in\cB$;
		\item[2)]
		\begin{eqnarray}\label{comorphism equv2}
			&\Psi([Y_1,\cdots,Y_{n}]_\bF)=[\Psi(Y_1),\cdots,\Psi(Y_n)]_\bE,
		\end{eqnarray}	
		for all $Y_1,\cdots,Y_{n}\in \bF$ and $\Psi(Y_i)=\sum_{k_i}X_{k_i}\otimes_\cA b_{k_i}$, where $1\leq i \leq n,X_{k_i}\in \bE,b_{k_i}\in\cB$.
	\end{enumerate}
	Here $\psi:\cA \to \cB$ is an algebraic homomorphism and $\Psi:\bF \to \bE\otimes_{\cA}\cB$ is a map of $\cB$-modules. In particular, if  $\Psi$ is injective and $\psi$ is surjective, we call  $(\bF,[\cdotp,\cdots,\cdotp]_\bF,\cB)$ a co-subalgebra of $(\bE,[\cdotp,\cdots,\cdotp]_\bE,\cA)$.
\end{definition}
For Equation \eqref{comorphism equv2}, we obtain the local expression
\begin{eqnarray*}
	&\Psi([Y_1,\cdots,Y_{n}]_\bF)=
	\sum_{k_1}\cdots \sum_{k_n}[X_{k_1},\cdots,X_{k_n}]_\bE \otimes_\cA b_{k_1}\cdots b_{k_n}\notag
	\\
	&\hspace*{13em}+\sum_{i=1}^{n}\sum_{k_i}(-1)^{n-i}X_{k_i}\otimes_\cA [Y_1,\cdots,\widehat{Y_i},\cdots,Y_{n},b_{k_i}]_\bF.
\end{eqnarray*}
The following definitions illustrate morphism and comorphism of Leibniz-Rinehart algebras of this form $(\mathcal{F},[\cdotp,\cdotp]_\mathcal{F},\cB)$.
\begin{definition}
	Given two Leibniz-Rinehart  algebras  $(\mathcal{E},[\cdotp,\cdotp]_\mathcal{E},\cA)$ and $(\mathcal{F},[\cdotp,\cdotp]_\mathcal{F},\cB)$. A morphism of Leibniz-Rinehart  algebras from $\mathcal{E}$ to $\mathcal{F}$ is a pair  $(\mathcal{E},[\cdotp,\cdotp]_\mathcal{E},\cA)\stackrel{{(\Psi,\psi)}}\rightrightarrows (\mathcal{F},[\cdotp,\cdotp]_\mathcal{F},\cB)$ such that
	\begin{enumerate}
		\item[1)]
		\begin{equation}
			\psi([x,a]_\mathcal{E})=[\Psi(x),\psi(a)]_\mathcal{F},\hspace*{0.5em}\forall x\in \mathcal{E},a\in \cA;
		\end{equation}
		\item[2)]
		\begin{equation}
			\Psi([x_1,x_2]_{\mathcal{E}})=[\Psi(x_1),\Psi(x_2)]_\mathcal{F},\hspace*{0.5em}\forall x_1,x_2\in \mathcal{E}.
		\end{equation}
	\end{enumerate}
	Here $\psi:\cA \to \cB$ is an algebraic homomorphism and $\Psi:\mathcal{E} \to \mathcal{F}$ is a map of $\cA$-modules (considering $\cB$-module as $\cA$-modules through $\psi$).
\end{definition}
\begin{definition}
	Given two Leibniz-Rinehart algebras  $(\mathcal{E},[\cdotp,\cdotp]_\mathcal{E},\cA)$ and $(\mathcal{F},[\cdotp,\cdotp]_\mathcal{F},\cB)$. A comorphism of Leibniz-Rinehart algebras  from $\mathcal{F}$ to $\mathcal{E}$ is a pair  $(\mathcal{F},[\cdotp,\cdotp]_\mathcal{F},\cB)\stackrel{{(\Psi,\psi)}}\rightleftarrows (\mathcal{E},[\cdotp,\cdotp]_\mathcal{E},\cA)$ such that
	\begin{enumerate}
		\item[1)]
		\begin{equation}
			[y,\psi(a)]_\mathcal{F}=\sum_{i}b_i\psi[x_i,a]_\mathcal{E},\hspace*{0.5em}\forall a\in \cA,
		\end{equation}
		%for all $Y_1,\cdots,Y_{n-1}$ and $\Psi(Y_i)=\sum_{k_i}X_{k_i}\otimes_\cA b_{k_i}$, where $1\leq i \leq n-1,X_{k_i}\in \bE,b_{k_i}\in\cB$;
		\item[2)]
		\begin{eqnarray}
			&\Psi([y_1,y_2]_\mathcal{F})=[\Psi(y_1),\Psi(y_2)]_\mathcal{E},
		\end{eqnarray}	
		for all $y_1,y_{2}\in\mathcal{F}$ and $\Psi(y)=\sum_{i}x_{i}\otimes_\cA b_{i}$, where $1\leq i \leq n,x_{i}\in \mathcal{E},b_{i}\in\cB$.
	\end{enumerate}
	Here $\psi:\cA \to \cB$ is an algebraic homomorphism and $\Psi:\mathcal{F} \to \mathcal{E}\otimes_{\cA}\cB$ is a map of $\cB$-modules.
\end{definition}
\begin{remark}\label{remark4.3}
	Given a comorphism of $n$-Lie-Rinehart algebras   $(\bF,[\cdotp,\cdots,\cdotp]_\bF,\cB)\stackrel{{(\Psi,\psi)}}\rightleftarrows (\bE,[\cdotp,\cdots,\cdotp]_\bE,\cA)$, then we obtain a comorphism of  Leibniz-Rinehart algebra $(\mathcal{F},[\cdotp,\cdotp]_\mathcal{F},\cB)\stackrel{{(\Psi,\psi)}}\rightleftarrows (\mathcal{E},[\cdotp,\cdotp]_\mathcal{E},\cA)$.
	\\
	Conversely, let $(\mathcal{F},[\cdotp,\cdotp]_\mathcal{F},\cB)\stackrel{{(\Psi,\psi)}}\rightleftarrows (\mathcal{E},[\cdotp,\cdotp]_\mathcal{E},\cA)$ be a comorphism of  Leibniz-Rinehart algebra, then there exists a comorphism of $n$-Lie-Rinehart algebras   $(\bF,[\cdotp,\cdots,\cdotp]_\bF,\cB)\stackrel{{(\Psi,\psi)}}\rightleftarrows (\bE,[\cdotp,\cdots,\cdotp]_\bE,\cA)$.
\end{remark}

\begin{proposition}[\cite{ref5}]\label{prop4.4}
	Let $\bE,\bF$ be finitely generated projective $\cA$, $\cB$-modules respectively, and $\psi:\cA \to \cB$ be an algebraic homomorphism.
	\begin{enumerate}
		\item[1)]$\bE \otimes_\cA \cB$ is a finitely generated projective $\cB$-module, and
		\begin{eqnarray*}
			(\bE \otimes_\cA \cB)\wedge_\cA (\bE \otimes_\cA \cB)&=&(\wedge^2_\cA\bE)\otimes_\cA \cB\\
			&\cdots&
			\\
			\underbrace{(\bE \otimes_\cA \cB)\wedge_\cA\cdots \wedge_\cA(\bE \otimes_\cA \cB)}_{\mbox{k-copies}}&=&(\wedge^k_\cA\bE)\otimes_\cA \cB.
		\end{eqnarray*}
		\item[2)] The map $I:\bE \otimes_\cA \cB \to Hom_\cA(\bE^*_\cA,\cB)$, sending each $X\otimes_\cA b$ to
		$$I(X\otimes_\cA b): \xi \to \psi(<\xi,X>)b,\hspace*{0.5em}\forall \xi \in \bE^*_\cA,b\in \cB,$$
		is an isomorphism of $\cB$-modules. Similarly, we have
		\begin{eqnarray*}
			(\wedge^2_\cA\bE)\otimes_\cA \cB &\cong& Hom_\cA(\wedge^2_\cA\bE^*_\cA,\cB)\\
			&\cdots&\\
			(\wedge^k_\cA\bE)\otimes_\cA \cB &\cong& Hom_\cA(\wedge^k_\cA\bE^*_\cA,\cB).
		\end{eqnarray*}
		\item[3)] Let $\Psi: \bF \to \bE \otimes_\cA \cB$ be a $\cB$-map. There is an induced  $\cA$-map $\Psi^*: \bE^*_\cA \to \bF^*_\cB$, called the dual map of $\Psi$, such that
		$$<\Psi^*(\xi),Y>=<I\circ \Psi(Y),\xi>,\hspace*{0.5em}\forall \xi \in \bE^*_\cA,Y\in \bF.$$
		\item[3)] Let $\overline{\Psi}:\bE^*_\cA \to \bF^*_\cB$ be a map of $\cA$-module. There is a unique $\cB$-map $\Psi: \bF \to \bE\otimes_\cA\cB$ such that $\Psi^*=\overline{\Psi}$, ($\Psi(Y)=I^{-1}\circ <\overline{\Psi}(\cdotp),Y>$, for each $Y\in \bF$).
	\end{enumerate}
\end{proposition}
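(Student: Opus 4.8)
The plan is to treat this as a standard base-change computation for finitely generated projective modules, reducing every assertion to the case of free modules of finite rank. Recall that $\bE$ being finitely generated projective over $\cA$ means there is a module $\bE'$ and an integer $m$ with $\bE\oplus\bE'\cong\cA^m$, and similarly $\bF\oplus\bF'\cong\cB^{m'}$ over $\cB$. Every functor occurring in the statement — the base change $-\otimes_\cA\cB$, the exterior powers $\wedge^k$, the $\cA$- and $\cB$-linear duals, and $\Hom_\cA(-,\cB)$ (with $\cB$ viewed as an $\cA$-module through $\psi$) — is additive and commutes with finite direct sums. Hence any comparison map that is natural in the module and is an isomorphism on free modules automatically becomes an isomorphism on the projective summand, which is the mechanism I would use throughout.

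For part (1), applying $-\otimes_\cA\cB$ to $\bE\oplus\bE'\cong\cA^m$ gives $(\bE\otimes_\cA\cB)\oplus(\bE'\otimes_\cA\cB)\cong\cB^m$, so $\bE\otimes_\cA\cB$ is finitely generated projective over $\cB$. For the exterior-power identity I would write down the canonical $\cB$-linear comparison map
\[
\kappa:(\wedge^k_\cA\bE)\otimes_\cA\cB\to\wedge^k_\cB(\bE\otimes_\cA\cB),\qquad (X_1\wedge\cdots\wedge X_k)\otimes b\mapsto\big((X_1\otimes 1)\wedge\cdots\wedge(X_k\otimes 1)\big)b,
\]
verify it is bijective when $\bE=\cA^m$ by matching the bases indexed by $k$-element subsets of $\{1,\dots,m\}$, and then extend to projective $\bE$ by naturality and the summand argument. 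For part (2), the first point is to check that $I$ is well defined, i.e.\ that $(X,b)\mapsto(\xi\mapsto\psi(\langle\xi,X\rangle)b)$ is $\cA$-balanced: this is exactly where the $\cA$-module structure on $\cB$ through $\psi$ enters, since $\psi(\langle\xi,aX\rangle)b=\psi(a)\,\psi(\langle\xi,X\rangle)b$ realizes the action of $a$ on the $\cB$-factor. One then checks $I$ is $\cB$-linear, confirms bijectivity on $\bE=\cA^m$ (both sides free of rank $m$, with $I$ carrying the dual-basis pairing to evaluation), and extends by the summand argument; the higher exterior-power isomorphisms follow by combining this with part (1).

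The two assertions labelled (3) are then formal consequences of the isomorphism $I$. Given a $\cB$-map $\Psi:\bF\to\bE\otimes_\cA\cB$, for each $\xi\in\bE^*_\cA$ the rule $Y\mapsto\langle I\circ\Psi(Y),\xi\rangle$ is $\cB$-linear in $Y$, hence defines $\Psi^*(\xi)\in\bF^*_\cB$, and $\xi\mapsto\Psi^*(\xi)$ is $\cA$-linear because the pairing is $\cA$-linear in $\xi$ and $\cB$ is an $\cA$-module via $\psi$. Conversely, given an $\cA$-map $\overline{\Psi}:\bE^*_\cA\to\bF^*_\cB$, I would fix $Y\in\bF$, observe that $\xi\mapsto\langle\overline{\Psi}(\xi),Y\rangle$ lies in $\Hom_\cA(\bE^*_\cA,\cB)$ (using $\cA$-linearity of $\overline{\Psi}$ together with the $\psi$-action on $\cB$), and set $\Psi(Y):=I^{-1}\big(\langle\overline{\Psi}(\cdot),Y\rangle\big)$. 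Bijectivity of $I$ from part (2) yields both existence and uniqueness of the $\cB$-map $\Psi$ with $\Psi^*=\overline{\Psi}$, the defining relation $\langle\Psi^*(\xi),Y\rangle=\langle I\circ\Psi(Y),\xi\rangle$ being immediate from this construction.

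The step I expect to be most delicate is the exterior-power comparison in part (1): while $\kappa$ is the expected base-change isomorphism, some care is needed to see that it is compatible with the splitting $\wedge^\bullet(\bE\oplus\bE')\cong\wedge^\bullet\bE\otimes\wedge^\bullet\bE'$ both before and after tensoring with $\cB$, so that the summand argument genuinely isolates the $\wedge^k_\cA\bE$ piece. Once $I$ is established as a well-defined $\cB$-linear isomorphism, the remaining verifications in parts (2)–(3) are routine.
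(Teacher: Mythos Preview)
Your argument is correct and is precisely the standard base-change computation one would expect: reduce to the free case via a projective complement, check the natural comparison maps on bases, and use additivity and naturality to pass back to the summand. The only place to be a little careful, as you already flag, is the compatibility of the decomposition $\wedge^\bullet(\bE\oplus\bE')\cong\bigoplus_{i+j=\bullet}\wedge^i\bE\otimes\wedge^j\bE'$ with the comparison map $\kappa$, but this is a routine verification.

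There is nothing to compare against in the paper itself: the authors do not give a proof of this proposition but simply cite \cite{ref5} (Chen--Liu), writing ``The proof of Proposition \ref{prop4.4} can see \cite{ref5}.'' So your write-up in fact supplies what the paper omits.
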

The proof of Proposition \ref{prop4.4} can see \cite{ref5}.

\begin{definition}\label{defi4.3}
	Let $\bE$ be an $n$-Lie-Rinehart algebra, which is a finitely generated projective $\cA$-module. We define a differential operator $d^{n-1}_\bE: \wedge_\cA^{k(n-1)}\bE_\cA^* \to \wedge_\cA^{(k+1)(n-1)}\bE_\cA^*$  as follows:
	\begin{enumerate}
		\item $<d^{n-1}_\bE a,x>
		=\widehat{\rho}(x)(a)=[x,a]_{\wedge^{n-1}_\cA\bE}$;
		\item $<d^{n-1}_\bE \xi,x\wedge_\cA y>
		=\widehat{\rho}(x)<\xi,y>-\widehat{\rho}(y)<\xi,x>-<\xi,[x,y]_{\wedge^{n-1}_\cA\bE}>$;
		\item \begin{eqnarray*}
			&&d^{n-1}_\bE (\xi_1\wedge_\cA \cdots \wedge_\cA \xi_m)(x_1\wedge_\cA\cdotp\cdotp\cdotp\wedge_\cA x_{m+1})\\
			&=&\sum_{i=1}^{m+1}(-1)^{i-1}\widehat{\rho}(x_i)<\xi_1\wedge_\cA \cdots \wedge_\cA \xi_m,x_1\wedge_\cA\cdotp\cdotp\cdotp\wedge_\cA \widehat{x_i}\wedge_\cA\cdotp\cdotp\cdotp\wedge_\cA x_{m+1}>\notag
			\\
			&&+\sum_{1\leq i<j\leq m+1}^{m+1}(-1)^{i+j}<\xi_1\wedge_\cA \cdots \wedge_\cA \xi_m,\notag\\
			&&[x_i,x_j]_{\wedge^{n-1}_\cA\bE}\wedge_\cA\cdotp\cdotp\cdotp\wedge_\cA \widehat{x_i}\wedge_\cA\cdotp\cdotp\cdotp\wedge_\cA \widehat{x_j}\wedge_\cA\cdotp\cdotp\cdotp\wedge_\cA x_{m+1}>,
		\end{eqnarray*}
	\end{enumerate}
	where $a\in \cA,x_1,\cdots,x_{m+1},y\in \wedge_\cA^{n-1}\bE_\cA$ and $\xi,\xi_1,\cdots,\xi_m\in \wedge_\cA^{n-1}\bE^*_\cA$.
\end{definition}
%	\begin{remark}
	%		Here, we only define a differential operator. But, we not know whether $(d^{n-1}_\bE)^2$ is equal to zero, which is an open problem.
	%	\end{remark}
\begin{proposition}\label{prop4.6}
	Given two finitely generated projective $n$-Lie-Rinehart algebras  $(\bE,\cA)$ and $(\bF,\cB)$ over the algebras $\cA$ and $\cB$ respectively. Then the following statements are equivalent
	\begin{enumerate}
		\item[1)] $(\mathcal{F},[\cdotp,\cdotp]_{\wedge_\cB^{n-1}\bF:=\mathcal{F}},\cB)\stackrel{(\Psi,\psi)}\rightleftarrows (\mathcal{E},[\cdotp,\cdotp]_{\wedge_\cA^{n-1}\bE:=\mathcal{E}},\cA)$
		is a comorphism of Leibniz-Rinehart  algebras;
		\item[2)] the dual map $\Psi^*:\bE^*_\cA\to\bF^*_\cB$ satisfies
		\begin{equation}
			d^{n-1}_\bF \circ \Psi^*=\Psi^*\circ d^{n-1}_\bE,\hspace*{0.5em}\text{as a map} \wedge_\cA^{k(n-1)}\bE_\cA^* \to \wedge_\cA^{(k+1)(n-1)}\bF_\cA^*\hspace*{0.5em}(k\geq 0).
		\end{equation}	Here we regard $\Psi^*=\psi:\wedge^0_\cA\bE^*_\cA=\cA \to \wedge^0_\cB\bF^*_\cB=\cB$, and $\Psi^*$ naturally lifts to an $\cA$-map $\wedge^{k(n-1)}_\cA\bE^*_\cA \to \wedge^{k(n-1)}_\cB\bF^*_\cB$.
	\end{enumerate}
	Here the comorphism of Leibniz-Rinehart  algebras is defined by Remark \ref{remark4.3}.
\end{proposition}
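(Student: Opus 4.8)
The plan is to invoke Proposition \ref{prop4.4} to set up the duality and then verify the chain-map identity one cohomological degree at a time, reducing everything to the two defining conditions of a comorphism. By Proposition \ref{prop4.4}(2)--(3) (which applies because $\bE,\bF$ are finitely generated projective), the $\cB$-map $\Psi:\bF\to\bE\otimes_\cA\cB$ and its dual $\Psi^*:\bE^*_\cA\to\bF^*_\cB$ determine one another through $I$ via $\langle\Psi^*(\xi),Y\rangle=\langle I\circ\Psi(Y),\xi\rangle$, and Proposition \ref{prop4.4}(1) identifies $\wedge^k(\bE\otimes_\cA\cB)$ with $(\wedge^k_\cA\bE)\otimes_\cA\cB$. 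In particular the comorphism of Leibniz-Rinehart algebras (in the sense of Remark \ref{remark4.3}) is exactly the degree-$(n-1)$ piece $\wedge^{n-1}\Psi:\mathcal{F}\to\mathcal{E}\otimes_\cA\cB$, whose dual is the lift of $\Psi^*$ to $\wedge^{n-1}_\cA\bE^*_\cA=\mathcal{E}^*$. Since $d^{n-1}_\bE$ is, by the shape of Definition \ref{defi4.3}, a degree-$(n-1)$ derivation of the exterior algebra $\bigoplus_k\wedge^{k(n-1)}_\cA\bE^*_\cA$ and the lift of $\Psi^*$ is multiplicative for the wedge product, the identity $d^{n-1}_\bF\circ\Psi^*=\Psi^*\circ d^{n-1}_\bE$ need only be checked on the algebra generators, namely on $\wedge^0_\cA\bE^*_\cA=\cA$ and on $\mathcal{E}^*=\wedge^{n-1}_\cA\bE^*_\cA$; once it holds there it propagates to every $k$.

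First I would treat the generator in degree $0$. For $a\in\cA$ and $y=Y_1\wedge\cdots\wedge Y_{n-1}\in\mathcal{F}$ with $\Psi(Y_i)=\sum_{k_i}X_{k_i}\otimes_\cA b_{k_i}$, Definition \ref{defi4.3}(1) gives $\langle d^{n-1}_\bF(\psi(a)),y\rangle=[Y_1,\cdots,Y_{n-1},\psi(a)]_\bF$, while unwinding $\Psi^*$ through $I$ yields $\langle\Psi^*(d^{n-1}_\bE a),y\rangle=\sum b_{k_1}\cdots b_{k_{n-1}}\,\psi([X_{k_1},\cdots,X_{k_{n-1}},a]_\bE)$. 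Thus the degree-$0$ chain-map identity is literally Equation \eqref{comorphism equiva}, i.e. it is equivalent to the anchor condition of the comorphism.

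Next I would treat the generating degree $\mathcal{E}^*$. For $\xi\in\wedge^{n-1}_\cA\bE^*_\cA$ I would pair both sides of $d^{n-1}_\bF(\Psi^*\xi)=\Psi^*(d^{n-1}_\bE\xi)$ with $y_1\wedge_\cB y_2$ and expand each side by Definition \ref{defi4.3}(2). Writing $\Psi(y_i)=\sum x^{(i)}\otimes_\cA b^{(i)}$, the anchor terms $\widehat{\rho}(y_1)\langle\Psi^*\xi,y_2\rangle$ and $\widehat{\rho}(y_2)\langle\Psi^*\xi,y_1\rangle$ split, by the Leibniz rule for the derivation $\widehat{\rho}$ on $\cB$, into a part matching the anchor terms of $\Psi^*(d^{n-1}_\bE\xi)$ once Equation \eqref{comorphism equiva} is used, plus left-over pieces carrying $\widehat{\rho}(y_j)(b^{(i)})$. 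These left-over pieces are precisely absorbed by the derivation correction terms appearing when the bracket $[\Psi(y_1),\Psi(y_2)]$ on $\mathcal{E}\otimes_\cA\cB$ is expanded, so that the remaining term $\langle\Psi^*\xi,[y_1,y_2]_\mathcal{F}\rangle$ matches $\sum\psi(\langle\xi,[x^{(1)},x^{(2)}]_\mathcal{E}\rangle)b^{(1)}b^{(2)}$ exactly when Equation \eqref{comorphism equv2}, $\Psi([y_1,y_2]_\mathcal{F})=[\Psi(y_1),\Psi(y_2)]_\mathcal{E}$, holds. Hence, granted the degree-$0$ identity, the degree-$\mathcal{E}^*$ identity is equivalent to the bracket condition of the comorphism.

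Combining the two computations with the derivation/multiplicativity propagation gives both implications at once: a comorphism satisfies \eqref{comorphism equiva} and \eqref{comorphism equv2}, hence the chain-map identity on generators and therefore in all degrees; conversely the chain-map identity in degrees $0$ and $n-1$ returns the two comorphism conditions. The main obstacle is the degree-$\mathcal{E}^*$ step: it is where one must track, with care, the non-$\cB$-linear correction terms produced both by $\widehat{\rho}$ acting on the coefficients $b^{(i)}$ and by the Leibniz expansion of the induced bracket on $\mathcal{E}\otimes_\cA\cB$, and check that they cancel so that \eqref{comorphism equiva} and \eqref{comorphism equv2} are exactly what remains. A secondary point to make rigorous is that the formula of Definition \ref{defi4.3} indeed defines a graded derivation, which legitimizes the reduction to generators; if one prefers to avoid this, the same term-by-term matching can instead be carried out directly in each degree $k$ using Definition \ref{defi4.3}(3).
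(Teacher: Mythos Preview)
Your proposal is correct and follows essentially the same line as the paper's own proof: reduce the chain-map identity to degrees $0$ and $n-1$, then identify the degree-$0$ case with the anchor condition \eqref{comorphism equiva} and the degree-$(n-1)$ case (paired against $y_1\wedge_\cB y_2$) with the bracket condition \eqref{comorphism equv2}. The computations you outline match the paper's term by term; if anything you are more explicit than the paper about \emph{why} the reduction to generators is legitimate (the paper simply asserts the equivalence with the two low-degree conditions, whereas you invoke the derivation property of $d^{n-1}$ and the multiplicativity of $\Psi^*$).
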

\begin{proof}
	Since the Equation $<d^{n-1}_\bE a,x>
	=\widehat{\rho}(x)(a)=[x,a]_{\wedge^{n-1}_\cA\bE}$, then the second statement is equivalent to the following two conditions:
	\begin{enumerate}
		\item[1)] $d^{n-1}_\bF(\psi(a))=\Psi^*(d^{n-1}_\bE a),\hspace*{0.5em}\forall a\in \cA$;
		\item[2)] $d^{n-1}_\bF(\Psi^*(\xi))=\Psi^*(d^{n-1}_\bE(\xi) ),\hspace*{0.5em}\forall \xi\in \wedge^{n-1}_\cA \bE^*_\cA$.
	\end{enumerate}
	We now prove that these two conditions are equivalent to the first statement.
	
	Let $y\in \wedge^{n-1}_\cB\bF$ and $\Psi(y)=\sum_{k}x_{k}\otimes_\cA b_{k}$, for some $x_{k}\in \wedge^{n-1}_\cA\bE,b_{k}\in \cB$. By the condition $d^{n-1}_\bF(\psi(a))=\Psi^*(d^{n-1}_\bE a),\hspace*{0.5em}\forall a\in \cA$, we have
	\begin{eqnarray}
		&&[y,\psi(a)]_{\wedge^{n-1}_\cB\bF}\notag\\
		&=&<d^{n-1}_\bF(\psi(a)),y>\notag	\\
		&=&<\Psi^*(d^{n-1}_\bE(a)),y>\notag	\\
		&=&<d^{n-1}_\bE(a),I(\Psi(y))>\notag	\\
		&=&\sum_{k}\psi<d^{n-1}_\bE(a),x_{k}>b_{k}\notag	\\
		&=&\sum_{k}b_{k}\psi([x_k,a]_{\wedge^{n-1}_\cA\bE}).
	\end{eqnarray}
	Let $y_1,y_2\in \wedge^{n-1}_\cB \bF$ and $\Psi(y_i)=\sum_{k_i}x_{k_i}\otimes_\cA b_{k_i}$, for some $x_{k_i}\in \wedge^{n-1}_\cA\bE,b_{k_i}\in \cB,i=1,2$. Then we have
	\begin{eqnarray}
		&&<\Psi^*(d^{n-1}_\bE(\xi)),y_1\wedge_\cB y_2>\notag\\
		&=&<d^{n-1}_\bE(\xi),I(\Psi(y_1)\wedge_\cB \Psi(y_2))>\notag	\\
		&=&<d^{n-1}_\bE(\xi),I(\sum_{k_1k_2}(x_{k_1}\wedge_\cA x_{k_2})\otimes_\cA b_{k_1}b_{k_2})>\notag	\\
		&=&\sum_{k_1k_2}\psi(<d^{n-1}_\bE(\xi),x_{k_1}\wedge_\cA x_{k_2}>)b_{k_1}b_{k_2}> \notag	\\
		&=&\sum_{k_1k_2}\psi([x_{k_1},<\xi,x_{k_2}>]_{\wedge^{n-1}_\cA\bE}-[x_{k_2},<\xi,x_{k_1}>]_{\wedge^{n-1}_\cA\bE}-<\xi,[x_{k_1},x_{k_2}]_{\wedge^{n-1}_\cA\bE}>)b_{k_1}b_{k_2}\notag
		\\
		&=&\sum_{k_2}<d^{n-1}_\bF\circ \psi(<\xi,x_{k_2}>),y_1>b_{k_2}-\sum_{k_1}<d^{n-1}_\bF\circ \psi(<\xi,x_{k_1}>),y_2>b_{k_1}\notag
		\\
		&&\ -\sum_{k_1k_2}\psi(<\xi,[x_{k_1},x_{k_2}]_{\wedge^{n-1}_\cA\bE}>)b_{k_1}b_{k_2}.
	\end{eqnarray}
	On the other hand, we have
	\begin{eqnarray}
		&&<d^{n-1}_\bE(\Psi^*(\xi)),y_1\wedge_\cB y_2>\notag	\\
		&=&[y_1,<\Psi^*\xi,y_2>]_{\wedge^{n-1}_\cB\bF}-[y_2,<\Psi^*\xi,y_1>]_{\wedge^{n-1}_\cB\bF}-<\Psi^*\xi,[y_1,y_2]_{\wedge^{n-1}_\cB\bF}>\notag	\\
		&=&[y_1,\sum_{k_2}\psi<\xi,x_{k_2}>b_{k_2}]_{\wedge^{n-1}_\cB\bF}-[y_2,\sum_{k_1}\psi<\xi,x_{k_1}>b_{k_1}]_{\wedge^{n-1}_\cB\bF}-<\xi,I\circ \Psi [y_1,y_2]_{\wedge^{n-1}_\cB\bF}>\notag	\\
		&=&\sum_{k_2}<d^{n-1}_\bF\psi(<\xi,x_{k_2}>),y_1>b_{k_2}+\sum_{k_2}\psi(<\xi,x_{k_2}>)<d^{n-1}_\bF b_{k_2},y_1>\notag	\\
		&&-\sum_{k_1}<d^{n-1}_\bF\psi(<\xi,x_{k_1}>),y_2>b_{k_1}-\sum_{k_1}\psi(<\xi,x_{k_1}>)<d^{n-1}_\bF b_{k_1},y_2>\notag\\
		&&\ -<\xi,I\circ \Psi[y_1,y_2]_{\wedge^{n-1}_\cB\bF}>.
	\end{eqnarray}
	Thus, by condition $d^{n-1}_\bF (\Psi^*(\xi))=\Psi^*(d^{n-1}_\bE(\xi))$, we get
	\begin{eqnarray}
		&&<\xi,I\circ \Psi[y_1,y_2]_{\wedge^{n-1}_\cB\bF}>\notag\\
		&=&\sum_{k_1k_2}\psi(<\xi,[x_{k_1},x_{k_2}]_{\wedge^{n-1}_\cA\bE}>)b_{k_1}b_{k_2}+\sum_{k_2}\psi(<\xi,x_{k_2}>)<d^{n-1}_\bF b_{k_2},y_1>\notag	\\
		&&-\sum_{k_1}\psi(<\xi,x_{k_1}>)<d^{n-1}_\bF b_{k_1},y_2>.
	\end{eqnarray}
	Let
	$$z=\sum_{k_1k_2}[x_{k_1},x_{k_2}]_{\wedge^{n-1}_\cA\bE}\otimes_\cA b_{k_1}b_{k_2}+\sum_{k_2}x_{k_2}\otimes_\cA [y_1,b_{k_2}]_{\wedge^{n-1}_\cB\bF}-\sum_{k_1}x_{k_1}\otimes_\cA [y_2,b_{k_1}]_{\wedge^{n-1}_\cB\bF}.$$
	Then we have
	$$<\xi,I(\Psi[y_1,y_2]_{\wedge^{n-1}_\cB\bF}-z)>=0,\hspace*{0.5em}\forall \xi\in \wedge^{n-1}_\cA\bE^*_\cA.$$
	Since the map $I$ is an isomorphism, we obtain that $\Psi[y_1,y_2]_{\wedge^{n-1}_\cB\bF}-z$ A similar method shows that $2)$ implies $1)$.
\end{proof}

%Now, we have the two kinds  of morphisms of $n$-Lie-Rinehart  algebras.\\
The graph of a pair  $(\bE,[\cdotp,\cdots,\cdotp]_\bE,\cA)\stackrel{{(\Psi,\psi)}}\rightrightarrows (\bF,[\cdotp,\cdots,\cdotp]_\bF,\cB)$ is defined by
$$Gr_{(\Psi,\psi)}:=\{x+\widehat{\Psi}(x)|x\in \bE \otimes_\cA \cB\}\subset (\bE\otimes_\cA \cB)\oplus \bF.$$
Here $\widehat{\Psi}$ is the $\cB$-map $\bE\otimes_\cA \cB\to \bF$ defined by $X\otimes_\cA \cB \to \Psi(X)b,\hspace*{0.5em}X\in \bE,b\in \cB$.

The graph of a pair   $(\bF,[\cdotp,\cdots,\cdotp]_\bF,\cB)\stackrel{{(\Psi,\psi)}}\rightleftarrows (\bE,[\cdotp,\cdots,\cdotp]_\bE,\cA)$ is given by
$$Gr_{(\Psi,\psi)}:=\{\Psi(Y)+Y|Y\in \bF\}\subset (\bE\otimes_\cA \cB)\oplus \bF.$$
Thus, we have the following theorem:
\begin{theorem}\label{thero4.7}
	Given two $n$-Lie-Rinehart  algebras $(\bE,[\cdotp,\cdots,\cdotp]_\bE,\cA)$ and $(\bF,[\cdotp,\cdots,\cdotp]_\bF,\cB)$ and an algebraic homomorphism $\psi: \cA \to \cB$.
	\begin{enumerate}
		\item[1)]$(\bE,[\cdotp,\cdots,\cdotp]_\bE,\cA)\stackrel{{(\Psi,\psi)}}\rightrightarrows (\bF,[\cdotp,\cdots,\cdotp]_\bF,\cB)$ is a morphism of $n$-Lie-Rinehart algebras if and only if its graph $Gr_{(\Psi,\psi)}$ is an $n$-Lie-Rinehart subalgebra (over $\cB$) of the $\psi$-sum $\bE\oplus_\psi \bF$.
		\item[2)] $(\bF,[\cdotp,\cdots,\cdotp]_\bF,\cB)\stackrel{{(\Psi,\psi)}}\rightleftarrows (\bE,[\cdotp,\cdots,\cdotp]_\bE,\cA)$ is a comorphism of $n$-Lie-Rinehart algebras if and only if its graph $Gr_{(\Psi,\psi)}$ is an $n$-Lie-Rinehart subalgebra (over $\cB$) of the $\psi$-sum $\bE\oplus_\psi \bF$.
	\end{enumerate}
\end{theorem}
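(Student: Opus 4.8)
The plan is to reduce the assertion ``$Gr_{(\Psi,\psi)}$ is an $n$-Lie-Rinehart subalgebra of $\bE\oplus_\psi\bF$'' to two independent checks: (i) $Gr_{(\Psi,\psi)}$ is contained in $\bE\oplus_\psi\bF$, and (ii) $Gr_{(\Psi,\psi)}$ is closed under the $n$-bracket of the $\psi$-sum. The $\cB$-submodule property needs no work in either case, since in part 1) the graph is the image of the $\cB$-linear map $\mathfrak{X}\mapsto\mathfrak{X}+\widehat{\Psi}(\mathfrak{X})$ and in part 2) it is the image of the $\cB$-linear map $Y\mapsto\Psi(Y)+Y$; moreover the anchor of a subalgebra element lands in $\cB$ automatically by the first formula of Proposition \ref{prop 1.8}. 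Thus the substance of the theorem is to match condition 1) of each definition with (i) and condition 2) with (ii), reading off each equivalence by comparing the $\bE\otimes_\cA\cB$- and $\bF$-components separately in the direct sum.

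For part 1) I would first treat containment. Writing a graph element as $\mathfrak{X}_1+\widehat{\Psi}(\mathfrak{X}_1)$ with $\mathfrak{X}_1=\sum_{i_1}X_{i_1}\otimes_\cA b_{i_1}$, its $\bF$-component is $Y_1=\sum_{i_1}\Psi(X_{i_1})b_{i_1}$. Feeding $n-1$ such graph elements into the membership criterion \eqref{eq1.1} of Theorem \ref{theorem 1.7} and pulling the coefficients $b_{i_j}$ out of the $\bF$-bracket by the $\cB$-multilinearity \eqref{nlra01}, the criterion becomes $\sum\psi([X_{i_1},\cdots,X_{i_{n-1}},a])b_{i_1}\cdots b_{i_{n-1}}=[Y_1,\cdots,Y_{n-1},\psi(a)]_\bF$, which holds for all tuples exactly when \eqref{eq2.1} holds (the forward implication being recovered by specializing to simple tensors $X_j\otimes_\cA 1$). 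Next I treat closure via the explicit $n$-bracket of Proposition \ref{prop 1.8}: I must show that $\widehat{\Psi}$ applied to the $\bE\otimes_\cA\cB$-component of a bracket of $n$ graph elements equals its $\bF$-component $[Y_1,\cdots,Y_n]_\bF$. Applying \eqref{morphism defin 2} turns the leading term into $\sum[\Psi(X_{i_1}),\cdots,\Psi(X_{i_n})]_\bF\, b_{i_1}\cdots b_{i_n}$, and expanding $[Y_1,\cdots,Y_n]_\bF$ with $Y_j=\sum_{i_j}\Psi(X_{i_j})b_{i_j}$ by the Leibniz rule \eqref{nlra02} in $\bF$ produces precisely this leading term together with anchor-correction terms that should coincide with $\widehat{\Psi}$ of the mixed sum in Proposition \ref{prop 1.8}.

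Part 2) is more direct, since the defining conditions of a comorphism are already written in the language of the $\psi$-sum. For containment, a graph element $\Psi(Y_1)+Y_1$ with $\Psi(Y_1)=\sum_{k_1}X_{k_1}\otimes_\cA b_{k_1}$ substituted into criterion \eqref{eq1.1} reproduces verbatim the comorphism equation \eqref{comorphism equiva}, so containment holds iff condition 1) holds. For closure I compute the bracket of $n$ graph elements by Proposition \ref{prop 1.8}: its $\bF$-component is $[Y_1,\cdots,Y_n]_\bF$ and its $\bE\otimes_\cA\cB$-component is $\sum[X_{k_1},\cdots,X_{k_n}]_\bE\otimes_\cA b_{k_1}\cdots b_{k_n}+\sum_i(-1)^{n-i}X_{k_i}\otimes_\cA[Y_1,\cdots,\widehat{Y_i},\cdots,Y_n,b_{k_i}]_\bF$. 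The bracket is again a graph element iff this $\bE\otimes_\cA\cB$-component equals $\Psi([Y_1,\cdots,Y_n]_\bF)$, which is exactly the local form of \eqref{comorphism equv2} displayed after Definition \ref{equivalent morphism}; hence closure holds iff condition 2) holds.

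I expect the main obstacle to be the closure computation in part 1). The difficulty is purely bookkeeping: one must expand the $\bF$-bracket of the coefficients $Y_j=\sum_{i_j}\Psi(X_{i_j})b_{i_j}$ by repeated use of \eqref{nlra02}, track the signs $(-1)^{n+j}$ of Proposition \ref{prop 1.8}, and verify that each anchor-correction term from the Leibniz expansion is accounted for by $\widehat{\Psi}$ of the mixed term with nothing left over. Once the sign conventions in \eqref{morphism defin 2} and Proposition \ref{prop 1.8} are aligned the match is forced; this is the only genuinely computational step, as the containment statements and the whole of part 2) follow almost formally from Theorem \ref{theorem 1.7} and Proposition \ref{prop 1.8}.
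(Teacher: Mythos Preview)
Your proposal is correct and follows essentially the same approach as the paper: split each direction into (i) containment in $\bE\oplus_\psi\bF$ via the membership criterion of Theorem~\ref{theorem 1.7}, and (ii) closure under the bracket via the explicit formulas of Proposition~\ref{prop 1.8}. The paper's own proof is considerably more terse than your outline---it carries out the containment check for part 1) on simple tensors, then simply asserts that closure follows ``by the Proposition~\ref{prop 1.8}, the condition is true,'' and dispatches part 2) with ``similar to 1)''---so the closure bookkeeping you flag as the main obstacle is not written out there either.
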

\begin{proof}
	1)  Consider $x=X\otimes_\cA b$, Then $x+\widehat{\Psi}(x)=X\otimes_\cA b +b\Psi(X)\in Gr_{(\Psi,\psi)}$. By $(\ref{eq1.1})$ in Theorem $\ref{theorem 1.7}$, $x+\widehat{\Psi}(x)$ belongs to $\bE\oplus_\psi \bF$ if and only if
	\begin{eqnarray*}
		&&\psi([X_1,\cdotp,\cdotp \cdotp,X_{n-1},a]_\bE)b_1\cdots b_{n-1}\\
		&=&[b_1\Psi(X_1),\cdots,b_{n-1}\Psi(X_{n-1}),\psi(a)]_\bF\\
		&=&b_1\cdots b_{n-1}[\Psi(X_1),\cdots,\Psi(X_{n-1}),\psi(a)]_\bF
	\end{eqnarray*}
	holds, for all $X_i\otimes_\cA b_i+\widehat{\Psi}(X_i\otimes_\cA b_i)\in Gr_{(\Psi,\psi)},\hspace*{0.5em}\forall 2\leq i \leq n-1$. By $(\ref{eq2.1})$ in Definition \ref{defini 2.1} and the arbitrariness of $b_i$, the above condition holds if and only if the pair  $(\bE,[\cdotp,\cdots,\cdotp]_\bE,\cA)\stackrel{{(\Psi,\psi)}}\rightrightarrows (\bF,[\cdotp,\cdots,\cdotp]_\bF,\cB)$ satisfies Equation $\eqref{eq2.1}$. Next, we need only to prove that the graph $Gr_{(\Psi,\psi)}$ is closed under the $n$-bracket if and only if Equation \eqref{morphism defin 2} holds for the pair  $(\bE,[\cdotp,\cdots,\cdotp]_\bE,\cA)\stackrel{{(\Psi,\psi)}}\rightrightarrows (\bF,[\cdotp,\cdots,\cdotp]_\bF,\cB)$ . In fact, by the Proposition \ref{prop 1.8}, the condition is true.
	
	2) Similar to 1), the graph $Gr_{(\Psi,\psi)}$ is  also a subalgebra of $\bE\oplus_\psi \bF$.
	
	The proof is finished.
\end{proof}
\section{Morphisms and Comorphisms of $n$-Lie algebroids}
In this section, we give the definitions of morphisms and comorphisms of $n$-Lie algebroids.
It is proved that morphisms and comorphisms of $n$-Lie algebroids are equivalent to  comorphisms and morphisms of $n$-Lie-Rinehart algebras respectively.

%As we saw, linear Nambu-Poisson structures on vector bundles $V \to M$ correspond to $n$-Lie algebroid structures on $E=V^*$. Therefore, we expect that the category of vector bundles with linear Nambu-Poisson structures should be the same as the category of $n$-Lie algebroids.

Given a smooth map $\phi: M\to N$. Let $E_2$
be a vector bundle over $N$. We have the pull-back bundle $\phi^!E_2$ (over $M$). Thus, there exist an algebraic homomorphism $\psi=\phi^*:C^\infty(N)\to C^\infty(M)$. Let $E_1$ be a vector bundle over $M$ and   $\Phi_E: \phi^!E_2 \to E_1$ be a bundle map. We have the dual bundle map
\begin{equation}
	\xymatrix{
		E_1^* \ar[r]^{\Phi_{E^*}} \ar[d]^{p_1} & E_2^* \ar[d]^{p_2}\\
		M \ar[r]^{\phi} & N
	}
\end{equation}
Obviously, it induces a map
\begin{equation}\label{eq 1.2}
	\Phi_E^*: \Gamma(E_2) \to  \Gamma(E_1).
\end{equation}
The vector bundle comorphism have a different definition from \cite{ref21}.
\begin{definition}\label{defini 1.1}
	A vector bundle comorphism, depicted by a diagram
	\begin{equation}
		\xymatrix{
			E_1 \ar@{-->}[r]^{\Phi_{E}} \ar[d] & E_2 \ar[d]\\
			M \ar[r]^{\phi} & N
		}
	\end{equation}
	is given by a base map $\phi: M\to N$ together with a family of linear maps (going in the
	'opposite' direction)
	$$\Phi_{E}: (E_2)_{\phi(x)} \to (E_1)_x$$
	depending smoothly on $x$, in the sense that the resulting map $\phi^*E_2 \to E_1$ is smooth.
\end{definition}
The above vector bundle comorphism is equivalent to a vector bundle map $\Phi_{E}: \phi^!E_2 \to E_1$.
\begin{definition}\label{defini3.2}
	Let $(E_1,M,\rho_1,[\cdotp,\cdots,\cdotp]_1)$ and $(E_2,N,\rho_2,[\cdotp,\cdots,\cdotp]_2)$ be two $n$-Lie algebroids over bases $M$ and $N$ respectively. Given a smooth map $\phi: M\to N$, and a vector bundle morphism $\Phi_E: \phi^!E_2 \to E_1$, if it satisfies
	\begin{enumerate}
		\item[(i)] $\phi_*\circ \rho_1\circ \Phi_E^*=\rho_2$,
		\item[(ii)] the pullback map $(\ref{eq 1.2})$ preserves $n$-brackets,
	\end{enumerate}
	then we call $\Phi_E: \phi^!E_2 \to E_1$  a \textbf{comorphism of $n$-Lie algebroids},
	written as
	$$(E_2,N,[\cdotp,\cdots,\cdotp]_2)\stackrel{(\Phi_{E},\phi)} \rightleftarrows(E_1,M,[\cdotp,\cdots,\cdotp]_1).$$
	In particular, if $\phi$ is surjective and $\Phi_E$ is injective, then we call $(E_2,N)$ a co-subalgebroid of $(E_1,M)$.
\end{definition}
Obviously, this condition $(i)$ is not automatic hold. For example, let $N=M$, with $\phi$ the
identity map, let $E_2=TM$ be the tangent bundle.  Let $E_1=0$ be the trivial $n$-Lie algebroid with zero anchor map and zero $n$-bracket. Let
$X_1,\cdots,X_{n-1}\in \Gamma(TM)$ be some non-zero vector fields. Then there is a unique $n$-Lie algebroid comorphism  $\Phi_{E}:\phi^!E_2 \to 0$ covering
$\phi=id_M$; the pull-back map on sections is the zero map, and in particular preserves brackets.
But the condition $(i)$ would tell us $0 \sim_\phi \rho_2(X_1\wedge \cdots X_{n-1})$, i.e. $\rho_2(X_1\wedge \cdots \wedge X_{n-1})=0$.
\begin{remark}
	For every open set of all $x\in M$, where the pullback map $\Phi_{E}^*: \Gamma((E_2)_{\phi(x)})\to \Gamma((E_1)_x)$ is non-zero, condition $(i)$ is automatic. First, for every sections $\sigma_1,\cdots,\sigma_n$ of $E_2$ and $f\in C^\infty(N)$, we have $\Phi_{E}^*[\sigma_1,\cdotp,\cdotp,\cdotp,f\sigma_n]=[\Phi_{E}^*\sigma_1,\cdotp,\cdotp,\cdotp,(\phi^*f)\Phi_{E}^*\sigma_n]$. Using the Leibnitz rule, one obtain a formula
	$$(\phi^*(\rho_2(\sigma_1\wedge \cdots \wedge \sigma_{n-1})f)-\rho_1(\Phi_{E}^*(\sigma_1\wedge \cdots \wedge \sigma_{n-1}))(\phi^*f))\Phi_{E}^*\sigma_n=0.$$
	This proves that $\phi^*(\rho_2(\sigma_1\wedge \cdots \wedge \sigma_{n-1})f)=\rho_1(\Phi_{E}^*(\sigma_1\wedge \cdots \wedge \sigma_{n-1}))(\phi^*f)$ at all point $x\in M$ where $\Phi_{E}^*\sigma_n|_x \neq 0$ for every $\sigma_1,\cdots, \sigma_{n-1} \in \Gamma(E_2)$.
\end{remark}
\begin{corollary}\label{example5.4}
	Let $\psi=\phi^*:C^\infty(N)\to C^\infty(M)$. In Definition \ref{defini3.2}, $(1)$ is equivalent to
	$$\psi([\sigma_1,\cdotp\cdotp \cdotp,\sigma_{n-1},f]_{E_2})=[\Phi_{E}^*(\sigma_1),\cdots,\Phi_{E}^*(\sigma_{n-1}),\psi(f)]_{E_1},\hspace*{0.5em}\forall \sigma_1,\cdots,\sigma_{n-1}\in E_2,f\in C^\infty(N).$$
	(\text{C.f.} relation $(1)$ of Definition \ref{defini 2.1}.) Thus, the \textbf{comorphism}  of $n$-Lie algebroids in Definition \ref{defini3.2} is
	equivalent to the fact that
	$$(\Gamma(E_2),C^\infty(N))\stackrel{(\Phi_{E}^*,\psi)}\rightrightarrows (\Gamma(E_1),C^\infty(M))$$
	is a \textbf{morphism} of $n$-Lie-Rinehart algebras.
\end{corollary}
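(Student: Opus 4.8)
The plan is to translate condition $(i)$ of Definition \ref{defini3.2} into the language of anchors acting on functions and then to invoke the standard characterization of $\phi$-related vector fields. First I would recall the convention fixed just after Definition \ref{def:n-lie-renhart}, namely that the anchor acting on a function is a bracket with that function: for $\sigma_1,\cdots,\sigma_{n-1}\in\Gamma(E_2)$ and $f\in C^\infty(N)$ one has $\rho_2(\sigma_1\wedge\cdots\wedge\sigma_{n-1})(f)=[\sigma_1,\cdots,\sigma_{n-1},f]_{E_2}$, and likewise $\rho_1(\Phi_E^*\sigma_1\wedge\cdots\wedge\Phi_E^*\sigma_{n-1})(\phi^*f)=[\Phi_E^*\sigma_1,\cdots,\Phi_E^*\sigma_{n-1},\psi(f)]_{E_1}$, since $\psi=\phi^*$. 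Under this dictionary the identity to be proved is exactly $\phi^*\big(\rho_2(\sigma_1\wedge\cdots\wedge\sigma_{n-1})(f)\big)=\rho_1\big(\Phi_E^*(\sigma_1\wedge\cdots\wedge\sigma_{n-1})\big)(\phi^*f)$.

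Next I would read condition $(i)$, $\phi_*\circ\rho_1\circ\Phi_E^*=\rho_2$, pointwise. As $\phi$ need not be a diffeomorphism, $\phi_*$ applied to the vector field $\rho_1(\Phi_E^*(\sigma_1\wedge\cdots\wedge\sigma_{n-1}))$ on $M$ is to be read as $d\phi$ composed with that field, a section of $\phi^*TN$; condition $(i)$ then asserts that for each fundamental element this section coincides with $\rho_2(\sigma_1\wedge\cdots\wedge\sigma_{n-1})\circ\phi$. Equivalently, $(i)$ says precisely that the vector field $\rho_1(\Phi_E^*(\sigma_1\wedge\cdots\wedge\sigma_{n-1}))$ on $M$ is $\phi$-related to $\rho_2(\sigma_1\wedge\cdots\wedge\sigma_{n-1})$ on $N$, for every choice of $\sigma_1,\cdots,\sigma_{n-1}\in\Gamma(E_2)$.

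The core of the argument is then the elementary lemma that a vector field $X$ on $M$ is $\phi$-related to a vector field $Y$ on $N$ if and only if $X(\phi^*g)=\phi^*(Yg)$ for all $g\in C^\infty(N)$. Applying it with $X=\rho_1(\Phi_E^*(\sigma_1\wedge\cdots\wedge\sigma_{n-1}))$ and $Y=\rho_2(\sigma_1\wedge\cdots\wedge\sigma_{n-1})$ turns the $\phi$-relatedness encoded in $(i)$ into exactly the functional identity displayed above, yielding the equivalence of $(i)$ with Equation \eqref{eq2.1}. The only point requiring care is the reverse implication of this lemma: from $X(\phi^*g)=\phi^*(Yg)$ for all $g$ one must recover $d\phi_x(X_x)=Y_{\phi(x)}$ at each $x\in M$, which rests on the fact that a tangent vector is determined by its action on functions (equivalently, that the differentials $dg$ span each cotangent space of $N$); I expect this to be the sole non-formal step.

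Finally, to reach the full statement I would note that condition $(ii)$ of Definition \ref{defini3.2}, that $\Phi_E^*:\Gamma(E_2)\to\Gamma(E_1)$ preserves the $n$-brackets, is verbatim Equation \eqref{morphism defin 2} with $\Psi=\Phi_E^*$ and $\psi=\phi^*$, and that $\Phi_E^*$ is automatically a $C^\infty(N)$-module map once $C^\infty(M)$-modules are regarded as $C^\infty(N)$-modules through $\psi$. Combining this with the equivalence $(i)\Leftrightarrow\eqref{eq2.1}$ established above shows that $\Phi_E:\phi^!E_2\to E_1$ is a comorphism of $n$-Lie algebroids if and only if $(\Gamma(E_2),C^\infty(N))\stackrel{(\Phi_E^*,\psi)}\rightrightarrows(\Gamma(E_1),C^\infty(M))$ is a morphism of $n$-Lie-Rinehart algebras, as asserted.
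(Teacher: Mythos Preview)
Your proposal is correct and matches the paper's own reasoning. The paper does not supply a formal proof for this corollary; it simply states it and then remarks immediately afterward that condition $(i)$ can be restated as $\phi$-relatedness of $\rho_1(\Phi_E^*(Y_1\wedge\cdots\wedge Y_{n-1}))$ and $\rho_2(Y_1\wedge\cdots\wedge Y_{n-1})$, which is exactly the key step you isolate and then convert into the functional identity via the standard lemma on $\phi$-related vector fields.
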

We remark that, the first condition can be restated as: for each $Y_1,\cdots ,Y_{n-1} \in \Gamma(E_2)$, the vector field $\rho_1(\Phi_E^*(Y_1\wedge \cdots \wedge Y_{n-1}))$
is $\phi$-related to $\rho_2(Y_1\wedge \cdots \wedge Y_{n-1})$. We denote by  $\mathcal{LA^\vee}$ the category of $n$-Lie algebroids of rank $n$ and $n$-Lie algebroid comorphisms.
\begin{definition}[\cite{ref8}]
	Let $(M,\pi_1)$ and $(N,\pi_2)$ be two manifolds with $n$-vector fields. A smooth
	map $\phi: M\to N$ is called $(\pi_1,\pi_2)$-map if the induced brackets on functions satisfy:
	\begin{equation*}
		\{\phi^*f_1,\cdots,\phi^*f_n\}_1=\phi^*\{f_1,\cdots,f_n\}_2,
	\end{equation*}
	for all $f_1,\cdots,f_n\in C^\infty(N)$, or equivalently, $\phi_*\pi_1=\pi_2$.
	A $(\pi_1,\pi_2)$-map $\phi:(M,\pi_1)\to (N,\pi_2)$ between Nambu-Poisson manifolds of the same order $n$ is called a Nambu-Poisson map.
\end{definition}
Now, let $\mathcal{VB}_{Nambu}$  be the category of vector bundles with linear Nambu-Poisson structures of rank $n$; morphisms
in this category are vector bundle maps that are also Nambu-Poisson maps.  The following
result shows that there is category equivalence  between
$\mathcal{VB}_{Nambu}$  and $\mathcal{LA^\vee}$.
For any section $\sigma\in\Gamma(E)$, let $\varphi_\sigma \in C^\infty(M)$ be the corresponding linear function on the dual
bundle $E^*$. 	
\begin{theorem}\label{theo5.5}
	Let
	$(E_1,M,\rho_1,[\cdotp,\cdots,\cdotp]_1)$ and $(E_2,N,\rho_2,[\cdotp,\cdots,\cdotp]_2)$ be two $n$-Lie algebroids of rank $n$ over bases $M_1$ and $M_2$ respectively. A vector bundle morphism $\Phi_E: \phi^!E_2 \to E_1$ is an $n$-Lie algebroid comorphism if and only if the dual map $\Phi_{E^*}:E_1^* \to E_2^*$
	is a Nambu-Poisson map.
\end{theorem}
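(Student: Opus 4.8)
The plan is to translate both sides of the desired equivalence into statements about the linear Nambu--Poisson structures that the $n$-Lie algebroids $E_1,E_2$ induce on the duals $E_1^*,E_2^*$ (via the rank-$n$ correspondence recalled in the introduction), and then to test the Nambu--Poisson map condition on a generating set of functions. Recall that the linear Nambu bracket $\{\cdot,\cdots,\cdot\}$ on $E^*$ attached to an $n$-Lie algebroid $(E,M,\rho,[\cdot,\cdots,\cdot])$ is pinned down on the generating functions --- the fibrewise-linear functions $\varphi_\sigma$ ($\sigma\in\Gamma(E)$) and the basic functions $p^*f$ ($f\in C^\infty(M)$, with $p\colon E^*\to M$ the projection) --- by
\begin{eqnarray*}
\{\varphi_{\sigma_1},\cdots,\varphi_{\sigma_n}\}&=&\varphi_{[\sigma_1,\cdots,\sigma_n]},\\
\{\varphi_{\sigma_1},\cdots,\varphi_{\sigma_{n-1}},p^*f\}&=&p^*\big(\rho(\sigma_1\wedge\cdots\wedge\sigma_{n-1})(f)\big),
\end{eqnarray*}
while every bracket involving two or more basic arguments vanishes, since the linear $n$-vector field carries at most one base-direction derivative. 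As these functions generate $C^\infty(E^*)$ and the Nambu bracket is a derivation in each slot, it suffices to verify the Nambu--Poisson map identity for $\Phi_{E^*}$ on arguments drawn from these generators.

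First I would record how $\Phi_{E^*}^*$ acts on generators. Because the defining square commutes, $p_2\circ\Phi_{E^*}=\phi\circ p_1$, so $\Phi_{E^*}^*(p_2^*g)=p_1^*(\phi^*g)=p_1^*(\psi(g))$ with $\psi=\phi^*$. For a linear function, a direct pairing computation using the duality $\langle\Phi_{E^*}(\eta),v\rangle=\langle\eta,\Phi_E(v)\rangle$ yields $\Phi_{E^*}^*\varphi_\tau=\varphi_{\Phi_E^*\tau}$, where $\Phi_E^*\colon\Gamma(E_2)\to\Gamma(E_1)$ is the section pullback of (\ref{eq 1.2}). Both $\sigma\mapsto\varphi_\sigma$ and $f\mapsto p^*f$ are injective, which lets me pass from equalities of functions back to equalities of sections and of base functions.

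Then comes the case analysis, testing $\{\Phi_{E^*}^*F_1,\cdots,\Phi_{E^*}^*F_n\}_{E_1^*}=\Phi_{E^*}^*\{F_1,\cdots,F_n\}_{E_2^*}$ on generators. When all $F_i=\varphi_{\tau_i}$ are linear, the two sides reduce to $\varphi_{[\Phi_E^*\tau_1,\cdots,\Phi_E^*\tau_n]_1}$ and $\varphi_{\Phi_E^*[\tau_1,\cdots,\tau_n]_2}$, so by injectivity the identity is equivalent to $\Phi_E^*$ preserving the $n$-bracket, i.e. condition $(ii)$ of Definition \ref{defini3.2}. When $F_1,\cdots,F_{n-1}$ are linear and $F_n=p_2^*g$ is basic, the two sides become $p_1^*\big(\rho_1(\Phi_E^*\tau_1\wedge\cdots\wedge\Phi_E^*\tau_{n-1})(\psi(g))\big)$ and $p_1^*\big(\psi(\rho_2(\tau_1\wedge\cdots\wedge\tau_{n-1})(g))\big)$; by injectivity of $p_1^*$ and the dictionary $\rho(\cdots)(f)=[\cdots,f]$, this is exactly the identity of Corollary \ref{example5.4}, hence equivalent to condition $(i)$. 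Finally, when at least two arguments are basic, both sides vanish identically by the vanishing property above, so no new constraint appears (skew-symmetry lets me put the basic arguments in the last slots throughout). Assembling the cases, $\Phi_{E^*}$ is a Nambu--Poisson map if and only if both $(i)$ and $(ii)$ hold, that is, if and only if $\Phi_E$ is an $n$-Lie algebroid comorphism.

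The main obstacle I anticipate is the reduction-to-generators step: one must justify that the linear Nambu tensor is genuinely determined by its values on the $\varphi_\sigma$ and $p^*f$, that these generate the function algebra (fibrewise-polynomially, which suffices locally), and that checking the multiplicative Nambu--Poisson identity on generators propagates to all functions through the Leibniz rule --- together with the clean vanishing of brackets having $\ge 2$ basic arguments, which collapses all higher cases and makes the equivalence land precisely on $(i)$ and $(ii)$. The remaining computations, namely the pairing identity for $\Phi_{E^*}^*\varphi_\tau$ and the two bracket expansions, are routine once the structure on generators is in hand.
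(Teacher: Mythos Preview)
Your proposal is correct and follows essentially the same approach as the paper: both arguments identify $\Phi_{E^*}^*\varphi_\tau=\varphi_{\Phi_E^*\tau}$ and $\Phi_{E^*}^*(p_2^*g)=p_1^*(\phi^*g)$, then test the Nambu--Poisson identity on generators, matching the all-linear case to bracket preservation (ii), the $(n-1)$-linear-plus-one-basic case to the anchor condition (i), and observing that the remaining cases with at least two basic arguments vanish on both sides. Your write-up is in fact more explicit than the paper's about why the reduction to generators is legitimate (Leibniz rule, injectivity of $\sigma\mapsto\varphi_\sigma$ and $p^*$), which the paper leaves implicit.
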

\begin{proof}
	Let $p_1: E_1^* \to M$ and $p_2: E_2^* \to N$ be two dual bundles correspondence to $E_1$ and $E_2$ respectively. To simplify notation, we denote all the pull-back maps $\phi^*,\Phi_E^*,\Phi^*_{E^*}$ by $\Phi^*$.
	For any vector bundle morphism $\Phi_E: \phi^!E_2 \to E_1$, and $\sigma \in \Gamma(E_2)$, we have that
	\begin{equation}
		\varphi_{\Phi^*\sigma}=\Phi^*\varphi_\sigma.
	\end{equation}
	Given sections $\sigma_1,\cdots,\sigma_n\in \Gamma(E_2)$ and  functions $f_1,\cdots,f_n\in C^\infty(M_2)$, for all $ 0\leq k \leq n-2,$ we have
	\begin{equation}\label{eq1.4}
		\Phi^*\{\varphi_{\sigma_1},\cdotp\cdotp\cdotp,\varphi_{\sigma_k},p_2^*f_{k+1},\cdotp,\cdotp,\cdotp,p_2^*f_n\}_2=0
		=\{\Phi^*\varphi_{\sigma_1},\cdotp,\cdotp,\cdotp,\Phi^*\varphi_{\sigma_k},\Phi^*p_2^*f_{k+1},\cdotp\cdotp\cdotp,\Phi^*p_2^*f_n\}_1, \end{equation}
	\begin{equation}\label{eq1.5}
		\varphi_{\Phi^*[\sigma_1,\cdots,\sigma_n]_2}=\Phi^*\varphi_{[\sigma_1,\cdots,\sigma_n]_2}=\Phi^*\{\varphi_{\sigma_1},\cdots,\varphi_{\sigma_n}\}_2,
	\end{equation}
	\begin{equation}\label{eq1.6}
		\varphi_{[\Phi^*\sigma_1,\cdots,\Phi^*\sigma_n]_1}=\{\varphi_{\Phi^*\sigma_1},\cdots,\varphi_{\Phi^*\sigma_n}\}_1=\{\Phi^*\varphi_{\sigma_1},\cdots,\Phi^*\varphi_{\sigma_n}\}_1,
	\end{equation}
	\begin{equation}\label{eq1.7}
		p_1^*\Phi^*(\rho_2(\sigma_1\wedge\cdots \wedge \sigma_{n-1})f_1)=\Phi^*p_2^*(\rho_2(\sigma_1\wedge\cdots \wedge \sigma_{n-1})f_1)=\Phi^*\{\varphi_{\sigma_1},\cdots,\varphi_{\sigma_{n-1}},p_2^*f_1\}_2,
	\end{equation}
	\begin{equation}\label{eq1.8}
		p_1^*(\rho_1(\Phi^*\sigma_1\wedge\cdots \wedge\Phi^*\sigma_{n-1})\Phi^*f_1)=\{\varphi_{\Phi^*\sigma_1},\cdots,\varphi_{\Phi^*\sigma_{n-1}},p_1^*\Phi^*f_1\}_1
		=\{\Phi^*\varphi_{\sigma_1},\cdots,\Phi^*p_2^*f_1\}_1.
	\end{equation}
	The Equation $\eqref{eq1.4}$  holds because of the local coordinates of $\pi_1$ and $\pi_2$ on $E_1^*$ and $E_2^*$ respectively. Therefore, $\Phi_E$ being an $n$-Lie algebroid comorphism is equivalent to the equality of the left hand sides of Equations $\eqref{eq1.5}$, $\eqref{eq1.6}$
	and equality of the left hand sides of Equations $\eqref{eq1.7}$, $\eqref{eq1.8}$, while $\Phi^*$ being a Nambu-Poisson
	map is equivalent to the equality of the corresponding right hand sides.
\end{proof}
\begin{remark}
	It is known that under some connectedness and simply connectedness assumption, any Lie bialgebra integrates to a Poisson-Lie group, and any Lie bialgebroid integrates to a Poisson groupoid. These results does not hold in the context of Nambu structures of order $\geq 3$.
	Therefore, we  not consider the integral problem of $n$-Lie algebroids in this paper.
\end{remark}

Next we define $n$-Lie algebroid morphism $H_E: E_1 \to E_2$.
\begin{definition}\label{defini 5.6}
	Let $(E,M,\rho,[\cdotp,\cdots,\cdotp])$ be an $n$-Lie algebroid, and $H\subseteq E$ a vector subbundle along $N\subseteq M$. We say that a vector subbundle $H$ is an $n$-Lie subalgebroid when it has the following properties:
	\begin{enumerate}
		\item[a.]If $\sigma_1|_N,\cdots,\sigma_n|_N\in \Gamma(H)$, then we have $[\sigma_1,\cdots,\sigma_n]|_N\in \Gamma(H)$, where $\sigma_1,\cdots,\sigma_n \in\Gamma(E)$,
		\item[b.]$\rho(\wedge^{n-1}H)\subseteq TN$.
	\end{enumerate}
	Therefore,  an $n$-Lie subalgebroid is itself an $n$-Lie algebroid.
\end{definition}
\begin{proposition}
	Let $H\subseteq E$ is an $n$-Lie subalgebroid along $N\subseteq M$, then $H$ has an $n$-Lie algebroid structure, with  anchor the restriction of $\rho: \wedge^{n-1}E\to TN$, and with the unique bracket
	such that
	\begin{equation}
		[\sigma_1|_N,\cdots,\sigma_n|_N]_N=[\sigma_1,\cdots,\sigma_n]|_N
	\end{equation}
	whenever $\sigma_1|_N,\cdots,\sigma_n|_N\in \Gamma(H)$.
\end{proposition}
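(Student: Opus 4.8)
The plan is to verify that the stated formula genuinely defines an $n$-Lie algebroid structure on $H$, the crux being well-definedness of the bracket. First I would record that every section $s\in\Gamma(H)$ (a section of $H$ along $N$) admits an extension to a section $\sigma\in\Gamma(E)$ with $\sigma|_N=s$; this is standard via a partition of unity subordinate to a tubular neighborhood of $N$. Given $s_1,\dots,s_n\in\Gamma(H)$, I then choose extensions $\sigma_1,\dots,\sigma_n\in\Gamma(E)$ and declare $[s_1,\dots,s_n]_N:=[\sigma_1,\dots,\sigma_n]|_N$, which lies in $\Gamma(H)$ by property (a) of Definition \ref{defini 5.6}.

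The main obstacle is showing this is independent of the chosen extensions, and this is where property (b) does the real work. By multilinearity and skew-symmetry it suffices to prove that whenever $\sigma_n|_N=0$ (while $\sigma_1|_N,\dots,\sigma_{n-1}|_N\in\Gamma(H)$) one has $[\sigma_1,\dots,\sigma_n]|_N=0$. Working in a local frame $\{e_\alpha\}$ of $E$, I write $\sigma_n=\sum_\alpha g_\alpha e_\alpha$ with each $g_\alpha|_N=0$. The Leibniz rule (ii) then gives
\[
[\sigma_1,\dots,\sigma_{n-1},\sigma_n]=\sum_\alpha\big(g_\alpha[\sigma_1,\dots,\sigma_{n-1},e_\alpha]+\rho(\sigma_1,\dots,\sigma_{n-1})(g_\alpha)\,e_\alpha\big).
\]
Upon restriction to $N$ the first summand vanishes since $g_\alpha|_N=0$; for the second, property (b) guarantees that $\rho(\sigma_1\wedge\cdots\wedge\sigma_{n-1})$ is tangent to $N$ at every point of $N$, and a vector tangent to $N$ annihilates any function vanishing on $N$, so $\rho(\sigma_1,\dots,\sigma_{n-1})(g_\alpha)|_N=0$ as well. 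This settles well-definedness; skew-symmetry and $K$-multilinearity of $[\cdot,\dots,\cdot]_N$ are then inherited directly from those of the bracket on $E$.

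Finally I would check the remaining $n$-Lie algebroid axioms, each obtained by restricting the corresponding identity on $E$ to $N$. The Filippov (fundamental) identity and the anchor-bracket compatibility (i) hold on $E$; since all the brackets appearing there restrict into $\Gamma(H)$ by property (a), restricting these identities to $N$ yields the same identities for $(\Gamma(H),[\cdot,\dots,\cdot]_N,\rho|_{\wedge^{n-1}H})$. For the Leibniz rule (ii) I take $f\in C^\infty(N)$, extend it to $\tilde f\in C^\infty(M)$, and compute $[s_1,\dots,s_{n-1},fs_n]_N=[\sigma_1,\dots,\sigma_{n-1},\tilde f\sigma_n]|_N$; the Leibniz rule on $E$, together with property (b)—which ensures that $\rho(\sigma_1,\dots,\sigma_{n-1})(\tilde f)|_N$ depends only on $f=\tilde f|_N$—produces exactly $f[s_1,\dots,s_n]_N+\rho|_{\wedge^{n-1}H}(s_1,\dots,s_{n-1})(f)\,s_n$, confirming both the Leibniz rule and the consistency of the restricted anchor. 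Uniqueness of the bracket is immediate, since the defining formula pins down its value on all sections of $H$.
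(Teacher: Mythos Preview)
Your argument is correct and follows essentially the same approach as the paper: the paper reduces well-definedness to showing $[\sigma_1,\dots,\sigma_n]|_N=0$ whenever $\sigma_n|_N=0$, writes $\sigma_n$ as a sum $\sum_j f_j\sigma_j$ with $f_j|_N=0$, and then uses the Leibniz rule together with $\rho(\sigma_1\wedge\cdots\wedge\sigma_{n-1})|_N\in TN$ (property~(b)) exactly as you do. Your version is in fact more complete, since you also verify the remaining $n$-Lie algebroid axioms for $H$, which the paper leaves implicit.
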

\begin{proof}
	To prove that this bracket is well-defined, we have to show that $[\sigma_1,\cdots,\sigma_n]|_N=0$ whenever
	$\sigma_n|_N=0$. Let $\sigma_n=\sum_{j}f_j^n\sigma_j^n$ where $f_j^n\in C^\infty(M)$ vanish on $N$. Thus, we have that
	$$[\sigma_1,\cdots,\sigma_n]|_N=\sum_{j}f_j^n|_N[\sigma_1, \cdots,  \sigma_{n-1},\sigma_j^n]|_N+\sum_{j}(\rho(\sigma_1\wedge \cdots \wedge \sigma_{n-1})f_j^n)|_N\sigma_j^n|_N=0,$$
	where we used that $\rho(\sigma_1\wedge \cdots \wedge \sigma_{n-1})f_j^n=0$, since $\rho(\sigma_1\wedge \cdots \wedge \sigma_{n-1})\in TN$ and the $f_j^n$ vanish on $N$.
\end{proof}
Now we use $n$-Lie subalgebroids  to define morphisms of $n$-Lie algebroids.
\begin{definition}\label{defi5.8}
	Given two $n$-Lie algebroids $(E_1,M,\rho_1,[\cdotp,\cdots,\cdotp]_1)$ and $(E_2,N,\rho_2,[\cdotp,\cdots,\cdotp]_2)$, a vector bundle map
	$$\Phi_E: E_1\to E_2$$
	is an \textbf{$n$-Lie algebroid morphism}, written $(E_1,M,[\cdotp,\cdots,\cdotp]_1)\stackrel{(\Phi_{E},\phi)}\rightrightarrows(E_2,N,[\cdotp,\cdots,\cdotp]_2)$, if its graph $Gr(\Phi_E)\subseteq E_2\times E_1^{-(n-1)}$ is an $n$-Lie subalgebroid along $Gr(\phi)$.
\end{definition}
$E_1^{-(n-1)}$ is $E_1$ as a
vector space, but the $n$-Lie bracket on section spaces $\Gamma(A)$ is given by $(-1)^{n-1}$ the bracket on $E_1$, and with $(-1)^{n-1}$
the anchor of $E_1$.
The category of $n$-Lie algebroids of rank $n$ with morphisms will be denoted by $\mathcal{LA}$.
Having defined the category $\mathcal{LA}$, it is natural to ask what corresponds to it on the dual side,
in terms of the linear Nambu-Poisson structures on vector bundles. The answer will have to wait until
we have the notion of  Nambu-Poisson relations.
\begin{theorem}
	With the above notations, $(E_1,M,[\cdotp,\cdots,\cdotp]_1)\stackrel{(\Phi_{E},\phi)}\rightrightarrows(E_2,N,[\cdotp,\cdots,\cdotp]_2)$ is a morphism of $n$-Lie
	algebroids if and only if
	\begin{enumerate}\label{th5.9}
		\item[1)]\begin{equation}\label{eq5.11}
			\rho_2\circ \Phi_{E}=\phi_*\circ \rho_1
		\end{equation}
		\item[2)]
		\begin{equation}\label{eq5.12}
			\Phi_E([\sigma_1,\cdots,\sigma_n])=[\Phi_E(\sigma_1),\cdots,\Phi_E(\sigma_n)],
		\end{equation}
		for all  $\sigma_1,\cdots,\sigma_n\in \Gamma(E_1)$.
	\end{enumerate}
\end{theorem}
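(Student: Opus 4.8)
The plan is to work directly from Definition \ref{defi5.8}: the pair $(\Phi_E,\phi)$ is an $n$-Lie algebroid morphism precisely when the graph $Gr(\Phi_E)=\{(\Phi_E(v),v)\mid v\in E_1\}\subseteq E_2\times E_1^{-(n-1)}$ is an $n$-Lie subalgebroid along $Gr(\phi)=\{(\phi(p),p)\mid p\in M\}$, and then to unwind the two defining conditions (a) and (b) of Definition \ref{defini 5.6} for this particular subbundle. I would first record the structure maps of the product $n$-Lie algebroid $E_2\times E_1^{-(n-1)}$ from the direct-sum formulas of Section 3, keeping in mind that the $E_1$-slot carries the anchor $(-1)^{n-1}\rho_1$ and the bracket $(-1)^{n-1}[\cdotp,\cdots,\cdotp]_1$. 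The second preliminary step is to identify which sections of $E_2\times E_1$ restrict to sections of $Gr(\Phi_E)$ along $Gr(\phi)$: a section $\tau\oplus\sigma$ does so iff $\tau$ is $\Phi_E$-related to $\sigma$, i.e. $\Phi_E(\sigma(p))=\tau(\phi(p))$ for all $p\in M$. Since $\Phi_E\circ\sigma$ is only a section of $\phi^!E_2$, such $\tau$ need not exist globally, but locally one can always choose $\Phi_E$-related pairs spanning $Gr(\Phi_E)$, and the well-definedness established just before Definition \ref{defi5.8} guarantees that restricting brackets of these representatives is legitimate.

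For condition (b), I would evaluate the product anchor on $W_1\wedge\cdots\wedge W_{n-1}$ with $W_i=(\Phi_E\sigma_i,\sigma_i)$. Because the product anchor splits as the direct sum of the two anchors acting on $C^\infty(N)\otimes C^\infty(M)$, its value is the pair whose $N$-component is $\rho_2(\Phi_E\sigma_1\wedge\cdots\wedge\Phi_E\sigma_{n-1})$ and whose $M$-component is $(-1)^{n-1}\rho_1(\sigma_1\wedge\cdots\wedge\sigma_{n-1})$. Condition (b) demands that this vector be tangent to $Gr(\phi)$, i.e. lie in $\{(\phi_*X,X)\}$; comparing the two components, and noting that the twist $(-1)^{n-1}$ is exactly what is arranged to reconcile the tangent-space parametrization of $Gr(\phi)$, this is equivalent to $\rho_2\circ\Phi_E=\phi_*\circ\rho_1$, which is Equation \eqref{eq5.11}. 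Conversely, \eqref{eq5.11} forces every such anchor image into $T\,Gr(\phi)$, yielding (b).

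For condition (a), I would compute the product $n$-bracket $[W_1,\cdots,W_n]$ of $\Phi_E$-related representatives $W_i=\tau_i\oplus\sigma_i$ using the direct-sum bracket formula of Section 3. Its $E_1$-component is $(-1)^{n-1}[\sigma_1,\cdots,\sigma_n]_1$ together with the anchor cross-terms, and its $E_2$-component is $[\tau_1,\cdots,\tau_n]_2$ with the symmetric cross-terms. Granting (b), the cross-terms on $Gr(\phi)$ are controlled by \eqref{eq5.11}, and closure of $Gr(\Phi_E)$ under the bracket says exactly that the $E_2$-component is $\Phi_E$-related to the $E_1$-component along $Gr(\phi)$. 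A direct comparison shows this holds for all related representatives iff $\Phi_E([\sigma_1,\cdots,\sigma_n]_1)=[\Phi_E\sigma_1,\cdots,\Phi_E\sigma_n]_2$, i.e. Equation \eqref{eq5.12}, the $(-1)^{n-1}$ sign on the $E_1$-bracket being again absorbed so that no spurious sign survives. Because both defining equations are $C^\infty$-linear in the appropriate slots, it suffices to verify them on local $\Phi_E$-related frames, which is where the local extension discussion of the first paragraph is used.

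The main obstacle I anticipate is the careful sign and reordering bookkeeping built into the $E_1^{-(n-1)}$ twist: one must check that the $(-1)^{n-1}$ factors on the anchor and bracket, together with the alternation signs produced by the $\wedge^{n-1}$ cross-terms in the direct-sum formulas, combine so that \eqref{eq5.11} and \eqref{eq5.12} emerge with the correct untwisted signs. A secondary technical point is the passage from abstract sections of the graph to $\Phi_E$-related pairs $\tau\oplus\sigma$, since $\Phi_E\circ\sigma$ is generally only defined along $\phi$; this I would handle by localizing and invoking the well-definedness of the restricted bracket already proved for $n$-Lie subalgebroids.
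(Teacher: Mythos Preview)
Your proposal is correct and follows essentially the same approach as the paper: both unwind Definition \ref{defi5.8} by verifying that conditions (a) and (b) of Definition \ref{defini 5.6} for $Gr(\Phi_E)\subseteq E_2\times E_1^{-(n-1)}$ are equivalent to \eqref{eq5.12} and \eqref{eq5.11} respectively. You are more careful than the paper about the passage to $\Phi_E$-related representatives and the $(-1)^{n-1}$ sign bookkeeping; the cross-terms you anticipate in the product bracket actually vanish for pure representatives $\tau_i\oplus\sigma_i$ (the function slots are constants), so that concern is harmless but unnecessary.
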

The Equation \eqref{eq5.12} can be written as local expression as follows. Let $\Psi_E^!(\sigma_i)=\sum_{k_i}f_{k_i}\tau_{k_i},\hspace*{0.5em}1\leq i\leq n$, for some $f_{k_i}\in C^\infty(M),\tau_i\in \Gamma(E_2)$, where $\Psi_E^!$
is the induced bundle map $E_1\to \phi^!E_2$, then
\begin{eqnarray*}
	\Psi_E^!([\sigma_1,\cdots,\sigma_n])=\sum_{k_1\cdotp\cdotp\cdotp k_n}f_{k_1}\cdotp\cdotp\cdotp f_{k_n}[\tau_{k_1},\cdots,\tau_{k_n}]+\sum_{   i}\sum_{k_i}(-1)^{n-i}[\sigma_1,\cdots,\widehat{\sigma_i},\cdots,\sigma_n,f_{k_i}].
\end{eqnarray*}
\begin{proof}
	Assume that $(E_1,M,[\cdotp,\cdots,\cdotp]_1)\stackrel{(\Phi_{E},\phi)}\rightrightarrows(E_2,N,[\cdotp,\cdots,\cdotp]_2)$ is a morphism of $n$-Lie algebroids. Then $Gr(\Phi_{E})\subseteq E_2\times E^{-(n-1)}_1$ is a subalgebroid along $Gr(\phi)$. By the definition of $n$-Lie subalgebroids, we have:
	\begin{enumerate}
		\item[1)]for all $\sigma_1,\cdots,\sigma_{n-1}\in \Gamma(E_1)$ and $\Phi_{E}(\sigma_1),\cdots,\Phi_{E}(\sigma_{n-1})\in\Gamma(E_2)$,
		\begin{eqnarray*}
			&&\rho((\Phi_{E}(\sigma_1),\sigma_1),\cdots,(\Phi_{E}(\sigma_{n-1}),\sigma_{n-1}))\notag
			\\
			&=&(\rho_2(\Phi_{E}(\sigma_1)\wedge\cdotp\cdotp\cdotp\wedge\Phi_{E}(\sigma_{n-1})),\rho_1(\sigma_1\wedge\cdotp\cdot\cdot\wedge\sigma_{n-1}))\in TGr(\phi).
		\end{eqnarray*}
		Then by the definition of $TGr(\phi)$, we have the following expression:
		\begin{equation}\label{eq5.13}
			\rho_2\circ(\Phi_{E}(\sigma_1)\wedge\cdotp\cdotp\cdotp\wedge\Phi_{E}(\sigma_{n-1})) =\phi_*\circ \rho_1(\sigma_1\wedge\cdotp\cdot\cdot\wedge\sigma_{n-1})
		\end{equation}
		The Equation \eqref{eq5.13} is equivalent to the equation \eqref{eq5.11}.
		\item[2)]for all $(\Phi_{E}(\sigma_1),\sigma_1),\cdots,(\Phi_{E}(\sigma_{n}),\sigma_{n})\in \Gamma(Gr(\Phi_{E}))$,
		\begin{eqnarray*}
			&&[(\Phi_{E}(\sigma_1),\sigma_1),\cdots,(\Phi_{E}(\sigma_{n}),\sigma_{n})]
			\\
			&=&([\Phi_{E}(\sigma_1),\cdots,\Phi_{E}(\sigma_n)]_2,(-1)^{n-1}[\sigma_1,\cdots,\sigma_n]_1)\in \Gamma(E_2\times E^{-(n-1)}_1).
		\end{eqnarray*}
		Then by the definition of $Gr(\Phi_{E})$, we get the following expression:
		\begin{equation}\label{eq5.14}
			[\Phi_{E}(\sigma_1),\cdots,\Phi_{E}(\sigma_n)]_2=\Phi_{E}([\sigma_1,\cdots,\sigma_n]_1)
		\end{equation}
		The Equation \eqref{eq5.14} is exactly the Equation \eqref{eq5.12}.
	\end{enumerate}
	
	Conversely, the proof is obvious.
\end{proof}
\begin{corollary}\label{example5.10}
	Let $\psi=\phi^*:C^\infty(N)\to C^\infty(M)$. In Theorem \ref{th5.9}, $(1)$ is equivalent to
	$$[\sigma_1,\cdots,\sigma_{n-1},\psi(g)]_1=\sum_{k_1,\cdots,k_n}f_{k_1}\cdotp\cdotp\cdotp f_{k_n}\psi([\tau_1,\cdots,\tau_{n-1},g]_2),\hspace*{0.5em}\forall \sigma_1,\cdots,\sigma_{n-1}\in E_1,g\in C^\infty(N).$$
	(\text{C.f.} relation $(1)$ of Definition \ref{equivalent morphism}.) Thus, the \textbf{morphism} of $n$-Lie algebroid in Theorem (\ref{th5.9}) is equivalent to the fact that
	$$(\Gamma(E_1),C^\infty(M))\stackrel{(\Phi_{E}^!,\psi)}\rightleftarrows (\Gamma(E_2),C^\infty(N))$$
	is a \textbf{comorphism} of $n$-Lie-Rinehart algebras.
\end{corollary}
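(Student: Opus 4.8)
The plan is to read the equivalence directly off Theorem~\ref{th5.9}, matching its two conditions against the two defining conditions of a comorphism of $n$-Lie-Rinehart algebras in Definition~\ref{equivalent morphism}, under the dictionary $\bF=\Gamma(E_1)$, $\cB=C^\infty(M)$, $\bE=\Gamma(E_2)$, $\cA=C^\infty(N)$, with $\psi=\phi^*:\cA\to\cB$ and $\Psi=\Phi_E^!:\Gamma(E_1)\to\Gamma(E_2)\otimes_{C^\infty(N)}C^\infty(M)\cong\Gamma(\phi^!E_2)$ the $\cB$-module map induced by the morphism $\Phi_E$. This is the exact transpose of Corollary~\ref{example5.4}, so the argument mirrors that one with the roles of morphism and comorphism interchanged; it suffices to show $(1)\Leftrightarrow 1)$ and $(2)\Leftrightarrow 2)$ separately.

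First I would treat condition~$(1)$ of Theorem~\ref{th5.9}. The identity $\rho_2\circ\Phi_E=\phi_*\circ\rho_1$ says precisely that for all $\sigma_1,\cdots,\sigma_{n-1}\in\Gamma(E_1)$ the vector field $\rho_1(\sigma_1\wedge\cdots\wedge\sigma_{n-1})$ on $M$ is $\phi$-related to the field obtained by applying $\rho_2$ to the images. Unwinding $\phi$-relatedness through its defining property $X(\psi g)=\psi(Yg)$ for all $g\in C^\infty(N)$, and substituting the local decomposition $\Phi_E^!(\sigma_i)=\sum_{k_i}f_{k_i}\tau_{k_i}$ together with the $C^\infty(M)$-multilinearity of the anchors, the left-hand side becomes $\rho_1(\sigma_1\wedge\cdots\wedge\sigma_{n-1})(\psi g)=[\sigma_1,\cdots,\sigma_{n-1},\psi(g)]_1$ and the right-hand side becomes $\sum_{k_1,\cdots,k_{n-1}}f_{k_1}\cdots f_{k_{n-1}}\,\psi([\tau_{k_1},\cdots,\tau_{k_{n-1}},g]_2)$. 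This is exactly the displayed equation, hence condition~$1)$ of Definition~\ref{equivalent morphism}.

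Next I would treat condition~$(2)$. Both $\Phi_E([\sigma_1,\cdots,\sigma_n]_1)=[\Phi_E(\sigma_1),\cdots,\Phi_E(\sigma_n)]_2$ and the comorphism's condition~$2)$ are bracket-preservation statements for the single map $\Phi_E^!$; the only thing to verify is that the bracket on the image side agrees. For this I would expand $[\Phi_E^!(\sigma_1),\cdots,\Phi_E^!(\sigma_n)]$ in $\Gamma(E_2)\otimes_{C^\infty(N)}C^\infty(M)\cong\Gamma(\phi^!E_2)$ using the induced $n$-Lie-Rinehart bracket of Lemma~\ref{lemma1.4} (equivalently, the local expression displayed just after Theorem~\ref{th5.9}), obtaining the principal term $\sum[\tau_{k_1},\cdots,\tau_{k_n}]_2\otimes f_{k_1}\cdots f_{k_n}$ together with the anchor corrections $\sum_i\sum_{k_i}(-1)^{n-i}\tau_{k_i}\otimes[\sigma_1,\cdots,\widehat{\sigma_i},\cdots,\sigma_n,f_{k_i}]_1$. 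Term-by-term comparison with the pullback of the $E_2$-bracket identifies this with $\Phi_E^!([\sigma_1,\cdots,\sigma_n]_1)$ exactly when $(2)$ holds, yielding condition~$2)$ of Definition~\ref{equivalent morphism}.

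The only genuinely delicate point, and the step I would write out most carefully, is the unfolding in condition~$(1)$: one must interpret $\rho_2\circ\Phi_E$ through the pullback bundle $\phi^!E_2$ rather than through honest sections of $E_2$, and then check that the resulting expression $\sum_{k_i}f_{k_i}\,\psi([\tau_{k_i},\cdots])$ is independent of the chosen local decomposition $\Phi_E^!(\sigma_i)=\sum_{k_i}f_{k_i}\tau_{k_i}$. This independence is forced because the left-hand side $[\sigma_1,\cdots,\sigma_{n-1},\psi(g)]_1$ is intrinsic. Once this is observed, both implications of each equivalence follow, and the stated comorphism of $n$-Lie-Rinehart algebras is obtained; the remaining computations are the routine transpose of Corollary~\ref{example5.4}.
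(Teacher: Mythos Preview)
Your proposal is correct and supplies exactly the argument the paper leaves implicit: the corollary is stated in the paper without proof, as an immediate consequence of Theorem~\ref{th5.9} together with the local expression displayed just after it. Your dictionary and the two-step matching of conditions $(1)\Leftrightarrow 1)$ and $(2)\Leftrightarrow 2)$ is precisely the intended reading; you also correctly fix the index typo in the displayed formula (the sum should run over $k_1,\dots,k_{n-1}$ with $\tau_{k_1},\dots,\tau_{k_{n-1}}$).
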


\section{Applications to Nambu-Poisson manifolds}

In this section,  we revisit some notations and  property about Nambu-Poisson submanifolds and  coisotropic submanifold of Nambu-Poisson manifolds.
Then we introduce the concept of Nambu-Poisson relation which is a generalization of Poisson relation introduced by Weinstein in \cite{ref26}.
Finally, we prove that $\Phi_{E}: E_1 \to E_2$ is an $n$-Lie algebroid morphism if and only if the dual comorphism $\Phi_{E^*}: E_1^* \dashrightarrow E_2^*$ is a Nambu-Poisson relation.
Thus we obtain that there is a category equivalence between $\mathcal{VB}^\vee_{Nambu}$  and $\mathcal{LA}$, see Theorem \ref{theo6.10}.

\subsection{Nambu-Poisson submanifolds}
A submanifold $N\subseteq M$ is called \textbf{Nambu-Poisson submanifold} if the Nambu-Poisson tensor $\pi$ is everywhere tangent to $N$, that is $\pi_x\in \wedge^nT_xN\subseteq \wedge^nT_xM$. Thus, we define an $n$-vector field $\pi_N\in \wedge^nTN$, satisfying the following property:
$$\pi_N \sim_i \pi$$
where $i: N\to M$ is the inclusion. Therefore, the corresponding Nambu-Poisson bracket $\{\cdotp,\cdots,\cdotp\}_N$ is given by
$$\{i^*f_1,i^*f_2,\cdots,i^*f_n\}_N=i^*\{f_1,f_2,\cdots,f_n\}.$$
The Jacobi identity for $\pi_N$ follows from that for $\pi$.
We have the following easy observation for Nambu-Poisson submanifolds of  Nambu-Poisson
manifolds.
\begin{proposition}
	The following are equivalent:
	\begin{enumerate}
		\item[(a)]\label{pro2.1 a} $N$ is a Nambu-Poisson submanifold.
		\item[(b)] $\pi^\sharp(\wedge^{n-1}T^*M|_N)\subseteq TN$.
		\item[(c)] $\pi^\sharp(\wedge^{n-2}T^*M|_N\wedge (TN)^\circ)=0$.
		\item[(d)] All Hamiltonian vector fields $X_{f_1\cdots f_{n-1}}$, $f_1,\cdots,f_{n-1}\in C^\infty(M)$ are tangent to $N$.
		
		\item[(e)] When $N$ is a closed embedded submanifold, these conditions (a)-(d) are also equivalent to
		the following property: the vanishing ideal of $N$
		$$\mathcal{I}(N):=\{f\in C^\infty(M)|f|_N\equiv0\}$$
		is an $n$-Lie algebra ideal; i.e., $\{f_1,\cdots,f_{i-1},g,f_{i+1},\cdots,f_n\}\in \mathcal{I}(N)$ whenever $f_j\in C^\infty(M)$ and $g\in \mathcal{I}(N)$.
	\end{enumerate}
\end{proposition}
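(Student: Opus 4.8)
The plan is to prove the whole proposition as a single chain of pointwise equivalences at each $x\in N$, reducing matters to linear algebra on $T_xM$ together with the standard adjunction between the contraction map $\pi^\sharp$ and the wedge pairing with $\pi_x$. Concretely, for $\alpha\in\wedge^{n-1}T_x^*M$ and $\eta\in T_x^*M$ one has $\langle\eta,\pi^\sharp(\alpha)\rangle=\pm\langle\eta\wedge\alpha,\pi_x\rangle$, where on the right the $n$-covector $\eta\wedge\alpha$ is paired with the $n$-vector $\pi_x$. I would also record the elementary fact that the annihilator of $\wedge^nT_xN$ inside $\wedge^nT_x^*M$ is exactly $(T_xN)^\circ\wedge\wedge^{n-1}T_x^*M$, so that $\pi_x\in\wedge^nT_xN$ if and only if $\langle\eta\wedge\alpha,\pi_x\rangle=0$ for all $\eta\in(T_xN)^\circ$ and all $\alpha\in\wedge^{n-1}T_x^*M$.

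Granting these two observations, the equivalence (a)$\Leftrightarrow$(b)$\Leftrightarrow$(c) becomes bookkeeping. First I would read (a) through the annihilator lemma as the vanishing of $\langle\eta\wedge\alpha,\pi_x\rangle$ for $\eta\in(T_xN)^\circ$; by the adjunction this says $\langle\eta,\pi^\sharp(\alpha)\rangle=0$ for all such $\eta$, i.e. $\pi^\sharp(\alpha)\in T_xN$, which is exactly (b). For (c) I would apply the adjunction once more: testing $\pi^\sharp(\beta\wedge\eta)$ against an arbitrary $\gamma\in T_x^*M$ gives $\pm\langle\gamma\wedge\beta\wedge\eta,\pi_x\rangle$, so (c) is the vanishing of $\langle(\gamma\wedge\beta)\wedge\eta,\pi_x\rangle$ for all $\gamma,\beta$ and $\eta\in(T_xN)^\circ$; since decomposables of the form $\gamma\wedge\beta$ span $\wedge^{n-1}T_x^*M$, this is literally the condition already obtained from (a). Thus (a), (b), (c) collapse to one pointwise identity.

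For (d) I would restrict (b) to exact forms: since a Hamiltonian field is $X_{f_1\cdots f_{n-1}}=\pi^\sharp(df_1\wedge\cdots\wedge df_{n-1})$, condition (d) is (b) tested only on decomposable exact $(n-1)$-covectors. As the differentials $df_i|_x$ realize arbitrary elements of $T_x^*M$ and such decomposables span $\wedge^{n-1}T_x^*M$, linearity of $\pi^\sharp$ upgrades (d) to (b). Finally, for (e) under the closed embedded hypothesis I would use that $\{dg|_x:g\in\mathcal{I}(N)\}$ is exactly $(T_xN)^\circ$ for $x\in N$, together with $\{f_1,\dots,f_{n-1},g\}=X_{f_1\cdots f_{n-1}}(g)=\langle dg,\pi^\sharp(df_1\wedge\cdots\wedge df_{n-1})\rangle$. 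If (d) holds then $X_{f_1\cdots f_{n-1}}$ is tangent to $N$ and differentiates the $N$-vanishing function $g$ to something again vanishing on $N$, giving the ideal property; conversely, if $\mathcal{I}(N)$ is an ideal, then for $\eta=dg|_x\in(T_xN)^\circ$ we get $\langle\eta,X_{f_1\cdots f_{n-1}}(x)\rangle=0$, forcing tangency and hence (d).

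The main obstacle I anticipate is not any single deep step but making the closed embedded hypothesis do its work in (e): one must know that every conormal covector at $x\in N$ is the differential of a globally defined function vanishing on $N$, and that the pointwise tangency statement (d) is equivalent to the genuinely global ideal condition (e), namely vanishing on all of $N$ rather than merely at one point. Keeping the contraction sign conventions consistent across (b) and (c), and verifying the annihilator description of $(\wedge^nT_xN)^\circ$, are the other places where care is needed; everything else is linear algebra and the observation that exact decomposables span.
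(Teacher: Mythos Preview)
Your proposal is correct and follows essentially the same route as the paper: both reduce (a)--(c) to pointwise linear algebra via the skew-symmetry of $\pi$, obtain (b)$\Leftrightarrow$(d) from the fact that exact decomposables span $\wedge^{n-1}T_x^*M$, and handle (d)$\Leftrightarrow$(e) by identifying $(T_xN)^\circ$ with $\{dg|_x:g\in\mathcal I(N)\}$. The only difference is packaging: you phrase (a)$\Leftrightarrow$(b) through the annihilator description of $\wedge^nT_xN$ together with the adjunction $\langle\eta,\pi^\sharp(\alpha)\rangle=\pm\langle\eta\wedge\alpha,\pi_x\rangle$, whereas the paper goes through the $i$-relatedness identity $(i_*)_x\circ\pi_{N,x}^\sharp\circ(i^*)_x=\pi^\sharp_x$ and additionally records the construction and Nambu--Poisson property of the induced tensor $\pi_N$ on $N$ (which you do not need for the bare equivalence, since the paper's definition of (a) is literally $\pi_x\in\wedge^nT_xN$).
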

\begin{proof}
	If $i : (N,\pi_N)\hookrightarrow (M,\pi)$ is a Nambu-Poisson submanifold, then $\pi_N$ is $i$-related to $\pi$:
	$$(i_*)_x(\pi_{N,x})=\pi_x,\hspace*{1em}\forall x\in N.$$
	This is equivalent to
	\begin{equation}
		(i_*)_x\circ \pi_{N,x}^\sharp\circ (i^*)_x=\pi^\sharp_x.
	\end{equation}
	Since $i_*$ is injective, this proves that $\pi_N$ is unique. Obviously, if $(N,\pi_N)$ is a Nambu-Poisson submanifold, then we have $\pi^\sharp(\wedge^{n-1}T^*M|_N)\subseteq TN$.
	
	Next, let $i :N  \hookrightarrow M$ be a submanifold such that  $\mathrm{Im}\pi^\sharp_x\subseteq (i_*)_x(TN)$. We claim that there exists a unique smooth $n$-vector field $\pi_N$ on $N$ such
	that $(a)$ holds. In fact, since $\mathrm{Im} \pi^\sharp_x\subseteq (i_*)_x(TN)$, it is enough to check that for
	any $\alpha_1\wedge \cdots \wedge \alpha_{n-1}\in \wedge^{n-2}T^*_xM\wedge (T_xN)^\circ=ker((i^*)_x)$ we have $\pi^\sharp_x(\alpha)=0$. By the $n$-skew-symmetry, for any $\alpha_n\in T^*_xM$ we have that
	$$<\pi^\sharp_x(\alpha_1\wedge \cdots \wedge \alpha_{n-1}),\alpha_n>=(-1)^{n-j}<\pi^\sharp_x(\alpha_1\wedge \cdots \wedge \alpha_{j-1}\wedge \alpha_n\wedge \alpha_{j+1}\wedge \cdots \wedge  \alpha_{n-1}),\alpha_j>=0.$$
	The $n$-skew-symmetry for $\pi_N$ follows from that for $\pi$. The smoothness of $\pi_N$ is
	automatically holds.
	
	The Schouten brackets of $i$-related multivector fields are also $i$-related:
	$$(i_*)_x([\pi_N,\pi_N])_x=[\pi,\pi]_{i(x)}=0.$$
	Therefore,  if (i) holds, then $N$ has a unique Nambu-Poisson structure such that it is a Nambu-Poisson submanifold.
	
	For the equivalence between $(b)$ and $(c)$, by using
	$$<\pi^\sharp_x(\alpha_1\wedge\cdots \wedge \alpha_{n-1}),\alpha_{n}>=(-1)^{n-j}<\pi^\sharp_x(\alpha_1\wedge \cdots \wedge \alpha_{j-1}\wedge \alpha_n\wedge \alpha_{j+1}\wedge \cdots \wedge  \alpha_{n-1}),\alpha_j>,$$
	with $\alpha_1,\cdots,\alpha_{n-1}\in T^*_xM$ and $\alpha_n\in (T_xN)^\circ$, we get
	$$\pi^\sharp_x(\wedge^{n-1}T^*_xM)\subseteq T_xN\hspace*{1em} \Leftrightarrow \hspace*{1em} \pi^\sharp_x(\wedge^{n-2}T^*_xM\wedge (T_xN)^\circ)=0.$$
	
	The equivalence between $(b)$ and $(d)$ is obvious.
	
	Let $N$ is an embedded submanifold.
	If $(d)$ holds, then the functions vanishing on $N$ are an $n$-Lie algebra ideal since $g|_N=0$ implies that
	$$\{f_1,\cdots,f_{i-1},g,f_{i+1},\cdots,f_n\}|_N=(-1)^{n-i}\{f_1,f_2,\cdots,f_{i-1},f_{i+1},\cdots,f_n,g\}|_N=0.$$
	Since $X_{f_1\cdots f_{i-1}f_{i+1}\cdots f_n}$ is tangent to $N$. This proves $(e)$. Conversely, if $(e)$ holds, such that
	$$\{f_1,f_2,\cdots,f_{i-1},f_{i+1},\cdots,f_n,g\}|_N=(-1)^{n-i}\{f_1,\cdots,f_{i-1},g,f_{i+1},\cdots,f_n\}|_N=0$$
	whenever $g|_N=0$. It follows that $<dg,X_{f_1\cdots f_{i-1}f_{i+1}\cdots f_n}>|_N=X_{f_1\cdots f_{i-1}f_{i+1}\cdots f_n}(g)|_N=0$, whenever $g|_N=0$. The differentials $dg|_N$ for $g|_N=0$ $(TN)^\circ$, hence this implies that $X_{f_1\cdots f_{i-1}f_{i+1}\cdots f_n}|_N \in \Gamma(TN)$, which gives $(d)$.
\end{proof}
\subsection{Coisotropic submanifolds}
In the subsection, we introduce coisotropic submanifolds of  Nambu-Poisson manifolds and give a Nambu-Poisson relation. The coisotropic submanifold of  Nambu-Poisson manifolds has introduced in \cite{ref8} through the closed embedded submanifold. However, under local condition, the embedded submanifold is equivalent to the immersion submanifold. In the subsection, a submanifold is an immersion submanifold unless otherwise specified. The following proposition from \cite{ref8}.
\begin{proposition}[\cite{ref8}]
	The following are equivalent:
	\begin{enumerate}
		\item[(a)] $\pi^\sharp(\wedge^{n-1}(TN)^\circ)\subseteq TN$.
		\item[(b)] For every $f_1,\cdots,f_{n-1}$ such that $f_i|_N=0,\hspace*{0.5em}\forall i=1,\cdots,n-1$, the Hamiltonian vector filed $X_{f_1\cdots f_{n-1}}$ is tangent to $N$.
		\item[(c)] The space of functions $f$ with $f|_N=0$ are an $n$-Lie subalgebra (or Nambu-Poisson subalgebra) under the Nambu-Poisson bracket.
	\end{enumerate}
\end{proposition}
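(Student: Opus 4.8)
The plan is to establish the two equivalences (a) $\Leftrightarrow$ (b) and (b) $\Leftrightarrow$ (c) separately, since each rests on a single clean dictionary between the geometry of $\pi^\sharp$ and the algebra of the Nambu bracket. Throughout I would work pointwise at $x \in N$ and locally, relying on two standing facts: the conormal space $(T_xN)^\circ$ is spanned by differentials $df|_x$ of functions $f$ vanishing on $N$ near $x$, and the Hamiltonian field is $X_{f_1\cdots f_{n-1}} = \pi^\sharp(df_1 \wedge \cdots \wedge df_{n-1})$ with $X_{f_1\cdots f_{n-1}}(g) = \{f_1, \ldots, f_{n-1}, g\}$.

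For (a) $\Leftrightarrow$ (b), I would argue as follows. Since $(T_xN)^\circ$ is spanned by the $df|_x$ with $f|_N = 0$, its exterior power $\wedge^{n-1}(T_xN)^\circ$ is spanned by decomposables $df_1 \wedge \cdots \wedge df_{n-1}|_x$ with all $f_i|_N = 0$. Because $\pi^\sharp_x$ is linear, condition (a)---that $\pi^\sharp_x$ maps $\wedge^{n-1}(T_xN)^\circ$ into $T_xN$---holds exactly when $\pi^\sharp_x(df_1 \wedge \cdots \wedge df_{n-1}) = X_{f_1\cdots f_{n-1}}|_x$ lies in $T_xN$ for every such decomposable, which is precisely assertion (b) that $X_{f_1\cdots f_{n-1}}$ is tangent to $N$. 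The forward implication is the trivial direction, since $df_i|_x \in (T_xN)^\circ$ whenever $f_i|_N = 0$; the reverse uses only that decomposables span the exterior power and that every conormal covector is realized as some $df|_x$.

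For (b) $\Leftrightarrow$ (c), I would use the elementary characterization that a vector field $X$ is tangent to $N$ if and only if $X(g)|_N = 0$ for every $g$ with $g|_N = 0$, which is just $\langle dg|_x, X_x\rangle = 0$ for all conormal $dg|_x$. Assuming (c), take $f_1, \ldots, f_{n-1}$ vanishing on $N$ and any $g$ vanishing on $N$; then $X_{f_1\cdots f_{n-1}}(g) = \{f_1, \ldots, f_{n-1}, g\}$ vanishes on $N$ by the subalgebra property, so $X_{f_1\cdots f_{n-1}}$ is tangent to $N$, giving (b). Conversely, assuming (b), for $f_1, \ldots, f_n$ all vanishing on $N$ the field $X_{f_1\cdots f_{n-1}}$ is tangent to $N$, hence $\{f_1, \ldots, f_n\} = X_{f_1\cdots f_{n-1}}(f_n)$ vanishes on $N$; this is exactly the statement that $\mathcal{I}(N)$ is closed under the Nambu bracket, i.e.\ (c).

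The one genuinely delicate point I expect to be the main obstacle is the spanning claim underpinning (a) $\Leftrightarrow$ (b): one must know that $\wedge^{n-1}(T_xN)^\circ$ is spanned by \emph{decomposable} wedges $df_1 \wedge \cdots \wedge df_{n-1}$ arising from functions vanishing on $N$. Decomposables always span an exterior power, so the real content is realizing a spanning set of $(T_xN)^\circ$ by such differentials. For an immersed submanifold this is a purely local statement, handled by passing to slice coordinates in which $N$ is cut out by coordinate functions, whose restrictions furnish the required vanishing functions. Everything else is a direct translation through the identity $X_{f_1\cdots f_{n-1}}(g) = \{f_1, \ldots, f_{n-1}, g\}$, so no further computation should be needed.
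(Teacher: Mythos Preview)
Your argument is correct and entirely standard: (a) $\Leftrightarrow$ (b) reduces to the linear-algebra fact that decomposables span $\wedge^{n-1}(T_xN)^\circ$ together with the local realization of conormals as $df|_x$ for $f|_N=0$, and (b) $\Leftrightarrow$ (c) is the dictionary $X_{f_1\cdots f_{n-1}}(g)=\{f_1,\ldots,f_{n-1},g\}$ combined with the tangency criterion $X|_N\in TN \Leftrightarrow X(\mathcal I(N))\subseteq\mathcal I(N)$.

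There is nothing to compare against here: the paper does not supply its own proof of this proposition. It is stated with a citation to \cite{ref8} and immediately followed by the definition of a coisotropic submanifold; no argument is given. Your write-up would in fact be more complete than what appears in the paper. The one caveat you already flagged---that the spanning of $(T_xN)^\circ$ by differentials of vanishing functions requires $N$ to be locally embedded (which an immersion is)---is the only point needing care, and you have identified it correctly.
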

A submanifold $N\subseteq M$ is called a coisotropic submanifold if it satisfies any of these equivalent
conditions.

In \cite{ref8}, the authors give the property of coisotropic submanifold with respect to any multivector field. Now, we give a similar proposition.
\begin{proposition}
	Let $(M_1,\pi_1)$ and $(M_2,\pi_2)$ be two manifolds with $n$-vector  fields $\pi_1$ and $\pi_2$ respectively. Let $\Phi: M_1 \to M_2$ be a smooth map. Then  $\Phi_*\pi_1=\pi_2$ if and only if its graph
	$$Gr(\Phi)=\{(\Phi(m_1),m_1)|m_1\in M_1\}$$
	is a coisotropic submanifold of $M_2 \times M_1$ with respect to $\pi_2\oplus(-1)^{n-1}\pi_1$.
\end{proposition}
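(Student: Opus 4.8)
The plan is to translate the coisotropy of $Gr(\Phi)$ into a pointwise statement about the sharp map of the product tensor and then to recognize that statement as the $\Phi$-relatedness of $\pi_1$ and $\pi_2$. The coisotropic submanifold proposition recalled just above gives the equivalent characterization $\pi^\sharp(\wedge^{n-1}(TN)^\circ)\subseteq TN$, so I would use this form throughout with $N=Gr(\Phi)$, ambient manifold $M_2\times M_1$, and tensor $\pi=\pi_2\oplus(-1)^{n-1}\pi_1$.

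First I would compute the tangent and conormal spaces of the graph. At a point $(\Phi(m),m)$ one has $T_{(\Phi(m),m)}Gr(\Phi)=\{(\Phi_*v,v)\mid v\in T_mM_1\}$, and a covector $(\alpha,\beta)\in T^*_{\Phi(m)}M_2\oplus T^*_mM_1$ annihilates this space exactly when $\langle\Phi^*\alpha+\beta,v\rangle=0$ for all $v$, i.e. $\beta=-\Phi^*\alpha$. Hence the conormal is the image of the linear isomorphism $\alpha\mapsto(\alpha,-\Phi^*\alpha)$ from $T^*_{\Phi(m)}M_2$. Because $\pi^\sharp$ is linear and $\wedge^{n-1}(TGr(\Phi))^\circ$ is spanned by decomposables, it suffices to test the coisotropy condition on products $\gamma_1\wedge\cdots\wedge\gamma_{n-1}$ with $\gamma_i=(\alpha_i,-\Phi^*\alpha_i)$.

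Next I would evaluate $\pi^\sharp$ on such covectors. Since $\pi_2$ has all its legs in $TM_2$ and $\pi_1$ all of its legs in $TM_1$, contracting with $(\alpha,\beta)$ extracts only the relevant component and no cross terms appear, so that
\[
\pi^\sharp(\gamma_1\wedge\cdots\wedge\gamma_{n-1})
=\big(\pi_2^\sharp(\alpha_1\wedge\cdots\wedge\alpha_{n-1}),\ (-1)^{n-1}\pi_1^\sharp(\beta_1\wedge\cdots\wedge\beta_{n-1})\big).
\]
Substituting $\beta_i=-\Phi^*\alpha_i$ produces a factor $(-1)^{n-1}$ from pulling the $n-1$ minus signs out of the wedge, which combines with the $(-1)^{n-1}$ built into the tensor to give $+1$; thus the $M_1$-component is exactly $\pi_1^\sharp(\Phi^*\alpha_1\wedge\cdots\wedge\Phi^*\alpha_{n-1})$. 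This sign cancellation, forced precisely by the choice $(-1)^{n-1}$ in $\pi_2\oplus(-1)^{n-1}\pi_1$, is the one computation I expect to require care, since an incorrect sign would destroy the equivalence.

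Finally, coisotropy demands that this vector lie in $T_{(\Phi(m),m)}Gr(\Phi)$, i.e. be of the form $(\Phi_*v,v)$. The second component forces $v=\pi_1^\sharp(\Phi^*\alpha_1\wedge\cdots\wedge\Phi^*\alpha_{n-1})$, and the condition collapses to
\[
\pi_2^\sharp(\alpha_1\wedge\cdots\wedge\alpha_{n-1})=\Phi_*\,\pi_1^\sharp(\Phi^*\alpha_1\wedge\cdots\wedge\Phi^*\alpha_{n-1}),
\]
required for all $\alpha_i\in T^*_{\Phi(m)}M_2$ and all $m$. Pairing both sides with an arbitrary $\alpha_n$ and using $\langle\Phi_*w,\alpha_n\rangle=\langle w,\Phi^*\alpha_n\rangle$ turns this into $\langle\pi_{2,\Phi(m)},\alpha_1\wedge\cdots\wedge\alpha_n\rangle=\langle\pi_{1,m},\Phi^*\alpha_1\wedge\cdots\wedge\Phi^*\alpha_n\rangle$, which is exactly the pointwise relatedness $(\Phi_*)_m\pi_{1,m}=\pi_{2,\Phi(m)}$, i.e. $\Phi_*\pi_1=\pi_2$. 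Since every step above is an equivalence, both implications of the proposition follow at once, and the argument is insensitive to whether $N$ is embedded or merely immersed.
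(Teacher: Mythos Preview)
Your proof is correct and follows essentially the same route as the paper: identify the conormal of $Gr(\Phi)$ as $\{(\alpha,-\Phi^*\alpha)\}$, apply the sharp map of $\pi_2\oplus(-1)^{n-1}\pi_1$ to decomposable $(n-1)$-forms built from such covectors, and observe that membership in $TGr(\Phi)$ is exactly $\Phi_*\pi_1=\pi_2$. You supply more detail than the paper, in particular the explicit sign cancellation and the final pairing with $\alpha_n$ to pass from the sharp-map identity to pointwise relatedness, but the structure and key idea are the same.
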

\begin{proof}
	Note
	that, a tangent vector to the graph consist of a pair $(\Phi_*X_{m_1},X_{m_1})$, where $m_1\in M_1,X_{m_1}\in T_{m_1}M_1$. Therefore, $(TGr(\Phi))^\circ$ consist of a pair of covectors $(\alpha,-\Phi^*\alpha)$, where $\alpha \in T^*_{\Phi(m_1)}M_2$.
	Thus, $Gr(\Phi)$ is a coisotropic submanifold of $M_2\times M_1$ with respect to $\pi_2\oplus(-1)^{n-1}\pi_1$ if and only if $\pi_2^\sharp \times (-1)^{n-1}\pi_1$ map $(\alpha_1,-\Phi^*\alpha_1)\wedge \cdots \wedge (\alpha_{n-1},-\Phi^*\alpha_{n-1})$ into $TGr(\Phi)$, for all $\alpha_1,\cdots,\alpha_{n-1} \in T^*_{\Phi(m_1)}M_2$ and $m_1\in M_1$.  In other words,
	$\Phi_*\pi_1=\pi_2$.
	The proof is finished.
\end{proof}

Let $\pi_1,\pi_2$ be two Nambu-Poisson tensors on $M_1$ and $M_2$ respectively. Then the map $\Phi$ is a Nambu-Poisson map.
In \cite{ref26}, Weinstein introduced the notion of Poisson relation. In a similar fashion for Nambu-Poisson manifolds, we can give the following definition:
\begin{definition}
	Let $M_1,M_2$ be two Nambu-Poisson manifolds. A {\bf Nambu-Poisson relation} from $M_1$ to $M_2$ is a coisotropic submanifold $N\subseteq M_2\times M_1^{(-1)^{n-1}}$, where $M_1^{(-1)^{n-1}}$ is $M_1$ with the Nambu-Poisson structure $(-1)^{n-1}\pi_1$.
\end{definition}
Nambu-Poisson relations are regarded as "comorphism". We will thus write
$$N: M_1\dashrightarrow M_2$$
for a submanifold $N\subseteq M_2\times M_1$ seen as such a "morphism". But, we need consider the compatible condition for the Nambu-Poisson relation: Given submanifolds
$N\subseteq M_2\times M_1$ and $H\subseteq M_3\times M_2$, the composition $H\circ N$ need not to be a submanifold.
\begin{definition}
	We say that two relations $N: M_1\dashrightarrow M_2$ and $H: M_2\dashrightarrow M_3$ (given by submanifolds $N\subseteq M_2\times M_1$ and $H\subseteq M_3\times M_2$) have $\mathbf{clean\hspace*{0.5em} composition}$ if
	\begin{enumerate}
		\item[(a)] $H\circ N$ is a submanifold;
		\item[(b)] $T(H\circ N)=TH\circ TN$ $\mathbf{fiberwise}$.
	\end{enumerate}
\end{definition}
By $(b)$,  for all $m_i\in M_i$ with $(m_3,m_2)\in H$ and $(m_2,m_1)\in N$, we get that
$$T_{(m_3,m_1)}(H\circ N)=T_{(m_3,m_2)}H\circ T_{(m_2,m_1)}N.$$

\begin{proposition}
	Given two Nambu-Poisson relations $N: M_1\dashrightarrow M_2$ and $H: M_2\dashrightarrow M_3$ with clean composition $H\circ N: M_1 \dashrightarrow M_3$. Then $H\circ N$ is again a Nambu-Poisson relation.
\end{proposition}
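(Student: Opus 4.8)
The plan is to fix base points and verify the infinitesimal coisotropy condition directly. Recall that $H\circ N\subseteq M_3\times M_1^{(-1)^{n-1}}$ is a Nambu-Poisson relation precisely when it is coisotropic for $\Pi:=\pi_3\oplus(-1)^{n-1}\pi_1$, i.e. when $\Pi^\sharp(\wedge^{n-1}(T(H\circ N))^\circ)\subseteq T(H\circ N)$. So I would fix $(m_3,m_1)\in H\circ N$ and choose $m_2\in M_2$ with $(m_3,m_2)\in H$ and $(m_2,m_1)\in N$, and show that for all $\gamma^{(1)},\dots,\gamma^{(n-1)}\in(T_{(m_3,m_1)}(H\circ N))^\circ$ the vector $\Pi^\sharp(\gamma^{(1)}\wedge\cdots\wedge\gamma^{(n-1)})$ is tangent to $H\circ N$. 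Two ingredients drive the argument: a description of the annihilator of the composed tangent relation, and the behaviour of the sharp map of a product $n$-vector.

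First I would record the linear-algebra lemma. Using the clean-composition hypothesis $T_{(m_3,m_1)}(H\circ N)=T_{(m_3,m_2)}H\circ T_{(m_2,m_1)}N$, and writing $TN\subseteq T_{m_2}M_2\times T_{m_1}M_1$, $TH\subseteq T_{m_3}M_3\times T_{m_2}M_2$, a covector $(\xi_3,\xi_1)\in T^*_{m_3}M_3\oplus T^*_{m_1}M_1$ annihilates $T(H\circ N)$ if and only if there is $\xi_2\in T^*_{m_2}M_2$ with $(\xi_3,\xi_2)\in (TH)^\circ$ and $(-\xi_2,\xi_1)\in (TN)^\circ$. This follows by realizing $TH\circ TN$ as the projection to the outer factors of $W=\{(u_3,u_2,u_1):(u_3,u_2)\in TH,\ (u_2,u_1)\in TN\}$ and computing $W^\circ$ as the sum of the annihilators of the two subspaces cutting out $W$. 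Thus each $\gamma^{(i)}=(\xi_3^{(i)},\xi_1^{(i)})$ comes equipped with an intermediate covector $\xi_2^{(i)}$ satisfying $(\xi_3^{(i)},\xi_2^{(i)})\in (TH)^\circ$ and $(-\xi_2^{(i)},\xi_1^{(i)})\in (TN)^\circ$.

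Second, I would note that for a product $n$-vector $\pi_A\oplus\pi_B$ on $A\times B$ the associated sharp map annihilates every mixed-type $(n-1)$-covector (one with legs in both $T^*A$ and $T^*B$) and splits on pure-type covectors, since contracting $\pi_A$ (all legs in $TA$) against a covector having a $T^*B$-leg yields zero, and symmetrically for $\pi_B$. Hence, decomposing $\gamma^{(1)}\wedge\cdots\wedge\gamma^{(n-1)}$ into its pure-$M_3$ part, its pure-$M_1$ part, and mixed terms, the mixed terms drop out and
$$\Pi^\sharp(\gamma^{(1)}\wedge\cdots\wedge\gamma^{(n-1)})=\big(\pi_3^\sharp(\xi_3^{(1)}\wedge\cdots\wedge\xi_3^{(n-1)}),\,(-1)^{n-1}\pi_1^\sharp(\xi_1^{(1)}\wedge\cdots\wedge\xi_1^{(n-1)})\big).$$
Applying the same computation to $\Pi_H=\pi_3\oplus(-1)^{n-1}\pi_2$ on $M_3\times M_2^{(-1)^{n-1}}$ and $\Pi_N=\pi_2\oplus(-1)^{n-1}\pi_1$ on $M_2\times M_1^{(-1)^{n-1}}$, the coisotropy of $H$ gives $\big(\pi_3^\sharp(\xi_3^{(1)}\wedge\cdots),\,(-1)^{n-1}\pi_2^\sharp(\xi_2^{(1)}\wedge\cdots)\big)\in TH$, while the coisotropy of $N$, using $(-\xi_2^{(1)})\wedge\cdots\wedge(-\xi_2^{(n-1)})=(-1)^{n-1}\xi_2^{(1)}\wedge\cdots\wedge\xi_2^{(n-1)}$, gives $\big((-1)^{n-1}\pi_2^\sharp(\xi_2^{(1)}\wedge\cdots),\,(-1)^{n-1}\pi_1^\sharp(\xi_1^{(1)}\wedge\cdots)\big)\in TN$. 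Setting $u_2:=(-1)^{n-1}\pi_2^\sharp(\xi_2^{(1)}\wedge\cdots\wedge\xi_2^{(n-1)})$, these two memberships say exactly that the displayed vector $\Pi^\sharp(\gamma^{(1)}\wedge\cdots\wedge\gamma^{(n-1)})$ lies in $TH\circ TN=T(H\circ N)$, which is what I need.

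The step I expect to require the most care is the sign bookkeeping that makes the two halves fit together: the middle covector $\xi_2$ enters $(TH)^\circ$ with sign $+$ but $(TN)^\circ$ with sign $-$, and it is precisely the $(-1)^{n-1}$ twist on the $M_1$- and $M_2$-factors that forces the intermediate vector $u_2$ produced by the coisotropy of $H$ to coincide with the one demanded by the coisotropy of $N$. For $n=2$ this reduces to the classical composition of Poisson relations; the genuinely new points are tracking the $(-1)^{n-1}$ signs through the wedge of $n-1$ covectors and the vanishing of mixed-type contractions, which is special to the product of $n$-vector fields.
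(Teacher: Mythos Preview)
Your proof is correct and follows essentially the same strategy as the paper: lift each annihilator covector of $T(H\circ N)$ through an intermediate covector at $m_2$, apply the coisotropy of $H$ and $N$ to land the resulting vectors in $TH$ and $TN$, and compose. The paper states the lifting of the annihilator and the splitting of the product sharp map without justification, whereas you spell out the linear-algebra lemma for $(TH\circ TN)^\circ$ and the vanishing of mixed-type contractions for a product $n$-vector; apart from this added detail and a cosmetically different (but equivalent) placement of the $(-1)^{n-1}$ signs, the two arguments coincide.
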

\begin{proof}
	In order to show $H\circ N$ is a coisotropic submanifold. Let
	$$(\alpha_{3_j},-\alpha_{1_j})\in (T(H\circ N))^\circ,\hspace*{0.5em} 1\leq j \leq n-1$$
	be given, with base point $(m_3,m_1)\in H\circ N$. By clean composition, we can pick a point $m_2\in M_2$ with $(m_3,m_2)\in H$ and $(m_2,m_1)\in N$ such that
	$$T_{(m_3,m_1)}(H\circ N)=T_{(m_3,m_2)}H\circ T_{(m_2,m_1)}N.$$
	Thus, there are some differential 1-forms $\alpha_{2_j}\in T^*_{m_2}M_2,\hspace*{0.5em}1\leq j\leq n-1$ such that $(\alpha_{3_j},-\alpha_{2_j})\in (TH)^\circ$ and $(\alpha_{2_j},-\alpha_{1_j})\in (TN)^\circ$. Let $X_i=\pi^\sharp_i(\alpha_{i_1}\wedge \cdots \wedge \alpha_{i_{n-1}})$. Then we have $(X_3,X_2)\in TH$ and $(X_2,X_1)\in TN$, since $H,N$ are coisotropic. Thus, we obtain that $H\circ N$ is coisotropic since $(X_3,X_1)\in TH\circ TN=T(H\circ N)$.
\end{proof}

%\subsection{Applications to $n$-Lie algebroids}
Note that $H\subseteq E$ is an $n$-Lie subalgebroid if and only if $\{\sigma\in \Gamma(E)|\sigma|_N\in \Gamma(H)\}$ is an $n$-Lie subalgebra, with $\{\sigma\in \Gamma(E)|\sigma|_N=0\}$ as an $n$-Lie ideal, that is  $\rho(\wedge^{n-1}H)\subseteq TN$. Therefore, for the dual picture, we have
\begin{eqnarray*}
	\sigma|_N\in \Gamma(H)\hspace*{0.5em}&\Leftrightarrow& \phi_\sigma \hspace*{0.5em}\mathrm{vanishes}\hspace*{0.5em}\mathrm{on} \hspace*{0.5em}H^\circ \subseteq E^*|_N,
	\\
	\sigma|_N=0\hspace*{0.5em}&\Leftrightarrow& \phi_\sigma \hspace*{0.5em}\mathrm{vanishes}\hspace*{0.5em}\mathrm{on} \hspace*{0.5em}E^*|_N.
\end{eqnarray*}
\begin{proposition}\label{pro2.7}
	Let  $E\to M$ be an $n$-Lie algebroid and $H\to N$ be a vector bundle. Then $H$ is an $n$-Lie subalgebroid if and only if $H^\circ \subseteq E^*$ is a coisotropic submanifold.
\end{proposition}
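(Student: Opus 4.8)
The plan is to prove both implications through part (c) of the coisotropy criterion recalled in Subsection 6.2, namely that a submanifold is coisotropic exactly when the functions vanishing on it form an $n$-Lie subalgebra for the Nambu bracket. Since coisotropy is a local condition, I would work in a local frame $e_1,\ldots,e_r$ of $E$ adapted to $H$ along $N$ (so that $e_1,\ldots,e_s$ span $H$ over $N$) and in coordinates on $M$ adapted to $N$. In these coordinates $H^\circ$ is cut out by the vanishing of the fibre coordinates dual to $e_1,\ldots,e_s$ together with the coordinates on $M$ transverse to $N$, so the vanishing ideal $\mathcal{I}(H^\circ)\subseteq C^\infty(E^*)$ is generated by the two families: (i) pullbacks $p^*g$ with $g\in C^\infty(M)$, $g|_N=0$; and (ii) fibrewise-linear functions $\varphi_\sigma$ with $\sigma\in\Gamma(E)$, $\sigma|_N\in\Gamma(H)$. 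This identification is exactly the dual dictionary recorded just above: $\varphi_\sigma$ vanishes on $H^\circ$ iff $\sigma|_N\in\Gamma(H)$, and $\varphi_\sigma$ vanishes on $E^*|_N$ iff $\sigma|_N=0$.

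Next I would invoke the three structure relations of the linear Nambu-Poisson bracket on $E^*$ (the same ones underlying the proof of Theorem \ref{theo5.5}): $\{\varphi_{\sigma_1},\ldots,\varphi_{\sigma_n}\}=\varphi_{[\sigma_1,\ldots,\sigma_n]}$, then $\{\varphi_{\sigma_1},\ldots,\varphi_{\sigma_{n-1}},p^*f\}=p^*\big(\rho(\sigma_1\wedge\cdots\wedge\sigma_{n-1})f\big)$, and finally $\{\varphi_{\sigma_1},\ldots,\varphi_{\sigma_k},p^*f_{k+1},\ldots,p^*f_n\}=0$ for $k\le n-2$. Because $\mathcal{I}(H^\circ)$ is an ideal and the bracket is a derivation in each slot, a Leibniz expansion shows that in any bracket of elements of $\mathcal{I}(H^\circ)$ every resulting term either carries an explicit generator factor (hence lies in $\mathcal{I}(H^\circ)$ because it is an ideal) or is a bracket of generators times a smooth function. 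Thus $\mathcal{I}(H^\circ)$ is closed under the $n$-bracket iff every bracket of $n$ generators lies in $\mathcal{I}(H^\circ)$, and the verification reduces to the generators of types (i) and (ii).

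For the forward implication I would assume $H$ is an $n$-Lie subalgebroid and run the case analysis by the number of type-(i) generators present. If at least two generators are pullbacks, the third relation makes the bracket vanish. If exactly one is $p^*g$ and the remaining $n-1$ are $\varphi_{\sigma_i}$ with $\sigma_i|_N\in\Gamma(H)$, the second relation gives $p^*\big(\rho(\sigma_1\wedge\cdots\wedge\sigma_{n-1})g\big)$; condition (b) of Definition \ref{defini 5.6} yields $\rho(\sigma_1\wedge\cdots\wedge\sigma_{n-1})|_N\in TN$, and since $g|_N=0$ this function vanishes on $N$, so it lies in $\mathcal{I}(H^\circ)$. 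If all $n$ generators are $\varphi_{\sigma_i}$, the first relation gives $\varphi_{[\sigma_1,\ldots,\sigma_n]}$, which lies in $\mathcal{I}(H^\circ)$ precisely because condition (a) gives $[\sigma_1,\ldots,\sigma_n]|_N\in\Gamma(H)$. Hence $\mathcal{I}(H^\circ)$ is an $n$-Lie subalgebra and $H^\circ$ is coisotropic.

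For the converse I would run the same computations backwards: if $H^\circ$ is coisotropic then $\mathcal{I}(H^\circ)$ is closed under the bracket, so applying it to generators $\varphi_{\sigma_1},\ldots,\varphi_{\sigma_n}$ of type (ii) returns $\varphi_{[\sigma_1,\ldots,\sigma_n]}\in\mathcal{I}(H^\circ)$, i.e.\ condition (a), while applying it to $\varphi_{\sigma_1},\ldots,\varphi_{\sigma_{n-1}},p^*g$ forces $\rho(\sigma_1\wedge\cdots\wedge\sigma_{n-1})g$ to vanish on $N$ for every $g\in C^\infty(M)$ with $g|_N=0$; as a vector annihilating all such $g$ is tangent to $N$, this is exactly condition (b), so $H$ is an $n$-Lie subalgebroid. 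I expect the main obstacle to be the bookkeeping in the reduction to generators and the correct handling of the degeneracy relation for multiple pullbacks; once that is set up, the matching of the two nontrivial generator-cases with conditions (a) and (b) is immediate.
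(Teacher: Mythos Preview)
Your proposal is correct and follows essentially the same route as the paper: both directions are obtained from the characterization of coisotropy via the vanishing ideal being an $n$-Lie subalgebra, combined with the three structure relations of the linear Nambu--Poisson bracket on $E^*$, and the same case analysis on how many generators are pullbacks versus fibrewise-linear functions. The only difference is that you make the reduction-to-generators step (via the Leibniz expansion and the ideal property) more explicit than the paper does, which simply asserts that the listed functions generate the vanishing ideal and concludes.
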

\begin{proof}
	Let $H^\circ \subseteq E$ be coisotropic. If $\sigma_1|_N,\cdots,\sigma_{n-1}|_N\in\Gamma(H)$ and $f|_N=0$, then $\phi_{\sigma_1},\cdots,\phi_{\sigma_{n-1}}$ and $p^*f$ vanish on $H^\circ$, so we have
	$$\{\phi_{\sigma_1},\cdots,\phi_{\sigma_{n-1}},p^*f\}=p^*(\rho(\sigma_1\wedge \cdots \sigma_{n-1})f).$$
	Since $H^\circ \subseteq E$ is a coisotropic submanifold, we have $\rho(\sigma_1\wedge \cdots \sigma_{n-1})f=0$,  which implies that $\rho(\sigma_1\wedge \cdots \wedge\sigma_{n-1})$ is tangent to $N$. Therefore, $\rho(\wedge^{n-1}H)\subseteq TN$ since $\sigma_1,\cdots,\sigma_{n-1}$ are any sections restricting to  sections of $N$. Similar, If $\sigma_1|_N,\cdots,\sigma_n|_N\in\Gamma(H)$, then $\phi_{\sigma_1}|_{H^\circ}=0,\cdots,\phi_{\sigma_n}|_{H^\circ}=0$, hence we have
	$$\{\phi_{\sigma_1},\cdots,\phi_{\sigma_n}\}=\phi_{[\sigma_1,\cdots,\sigma_n]}$$
	which implies that $[\sigma_1,\cdots,\sigma_n]|_N\subseteq \Gamma(H)$. This shows that $H$ is an $n$-Lie subalgebroid.
	
	Conversely, if $H$ is an $n$-Lie subalgebroid, then for all $\sigma_1|_N, \cdots,\sigma_n|_N \in \Gamma(H)$, and all $f_1|_N,\cdots,f_n|_N=0$, the Nambu-Poisson bracket
	\begin{eqnarray*}
		\{\phi_{\sigma_1},\cdots,\phi_{\sigma_n}\}&=&\phi_{[\sigma_1,\cdots,\sigma_n]},
		\\
		\{\phi_{\sigma_1},\cdots,\phi_{\sigma_{n-1},p^*f_i}\}&=&p^*(\rho(\sigma_1\wedge \cdots \sigma_{n-1})f_i),
		\\
		\{\phi_{\sigma_1},\cdots,\sigma_k,p^*f_{k+1},\cdots,p^*f_n\}&=&0,\hspace*{0.5em}\forall\hspace*{0.5em} 0\leq k \leq n-2
	\end{eqnarray*}
	all restrict to 0 on $H^\circ$. Since these functions generate the vanishing $n$-Lie ideal of $H^\circ$
	inside $C^\infty(M)$, then we have this ideal is an $n$-Lie subalgebra; that is, $H^\circ$ is coisotropic.
\end{proof}
\begin{remark}
	In Poisson geometry, there exist a nice symmetry:
	\begin{enumerate}
		\item[i)] For a Poisson manifold $(M,\pi)$, we have that $N\subseteq M$ is a coisotropic submanifold if and only if $(TN)^\circ \subseteq T^*$ is a Lie subalgebroid.
		\item[ii)] For a Lie algebroid $E$,  a vector subbundle $H\subseteq E$ is a Lie subalgebroid if and only if
		$(H)^\circ \subseteq E^*$
		is a coisotropic submanifold.
	\end{enumerate}
	However, for Nambu-Poisson manifold $(M,\pi)$, where $\pi\in \mathfrak{X}^l(M),\hspace*{0.5em}l\geq3,$ the first description is not true since $T^*M$ is not an $n$-Lie algebroid.
\end{remark}
\begin{definition}
	We denote by $\mathcal{VB^\vee}_{Nambu}$ the category of vector bundles with linear Nambu-Poisson
	structures of rank $n$, with comorphisms that are Nambu-Poisson relations.
\end{definition}
\begin{theorem}\label{theo6.10}
	Given two $n$-Lie algebroids $E_1 \to M_1$ and $E_2 \to M_2$ of rank $n$. Then $\Phi_{E}: E_1 \to E_2$ is an $n$-Lie algebroid morphism if and only if the dual comorphism $\Phi_{E^*}: E_1^* \dashrightarrow E_2^*$ is a Nambu-Poisson relation. We conclude that there is an equivalence of categories between
	$\mathcal{VB}^\vee_{Nambu}$ and $\mathcal{LA}.$ 
\end{theorem}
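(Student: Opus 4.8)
The plan is to reduce the first assertion to Proposition \ref{pro2.7} by passing to graphs, and then to recognize the annihilator of the morphism graph as the graph of the dual comorphism. First I would recall that, by Definition \ref{defi5.8}, the vector bundle map $\Phi_E: E_1 \to E_2$ covering $\phi: M_1 \to M_2$ is an $n$-Lie algebroid morphism precisely when its graph $Gr(\Phi_E) = \{(\Phi_E(e), e) \mid e \in E_1\}$ is an $n$-Lie subalgebroid of $E_2 \times E_1^{-(n-1)}$ along $Gr(\phi) \subseteq M_2 \times M_1$. I would then identify the dual linear Nambu-Poisson manifold of the product algebroid $E_2 \times E_1^{-(n-1)}$ with $E_2^* \times (E_1^*)^{(-1)^{n-1}}$ carrying $\pi_2 \oplus (-1)^{n-1}\pi_1$: since the correspondence $\{\varphi_{\sigma_1},\cdots,\varphi_{\sigma_n}\} = \varphi_{[\sigma_1,\cdots,\sigma_n]}$ is linear in the bracket and anchor, multiplying both structures of $E_1$ by $(-1)^{n-1}$ multiplies the associated linear tensor $\pi_1$ by the same sign, which is exactly the twist $(-1)^{n-1}\pi_1$ appearing in the definition of a Nambu-Poisson relation.

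Next I would apply Proposition \ref{pro2.7} to the $n$-Lie algebroid $E_2 \times E_1^{-(n-1)}$ and its subbundle $Gr(\Phi_E)$: the subbundle is an $n$-Lie subalgebroid if and only if its annihilator $(Gr(\Phi_E))^\circ$ is a coisotropic submanifold of $E_2^* \times (E_1^*)^{(-1)^{n-1}}$. The key fibrewise computation, carried out exactly as in the proposition characterizing when $Gr(\Phi)$ is coisotropic, gives $(Gr(\Phi_E))^\circ = \{(\alpha, -\Phi_E^*\alpha) \mid \alpha \in E_2^*\}$ sitting over $Gr(\phi)$, which is precisely the graph of the dual comorphism $\Phi_{E^*}: E_1^* \dashrightarrow E_2^*$ (the sign in the second factor being absorbed by the $(-1)^{n-1}$ twist, as in the earlier graph computation). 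Since a coisotropic submanifold of $E_2^* \times (E_1^*)^{(-1)^{n-1}}$ is by definition a Nambu-Poisson relation from $E_1^*$ to $E_2^*$, chaining these equivalences yields the claim: $\Phi_E$ is an $n$-Lie algebroid morphism if and only if $\Phi_{E^*}$ is a Nambu-Poisson relation.

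For the category equivalence between $\mathcal{VB}^\vee_{Nambu}$ and $\mathcal{LA}$, I would define the functor sending each rank-$n$ $n$-Lie algebroid $E \to M$ to its dual bundle $E^* \to M$ endowed with the linear Nambu-Poisson structure of rank $n$, and each morphism $\Phi_E$ to the Nambu-Poisson relation $\Phi_{E^*}$. The equivalence just proved shows this assignment is a bijection on morphisms, so the functor is fully faithful; essential surjectivity follows from the one-to-one correspondence between rank-$n$ $n$-Lie algebroid structures on $E^*$ and linear Nambu-Poisson structures of rank $n$ on $E$ established in \cite{BC}. To finish I would verify functoriality, namely that the composite of two $n$-Lie algebroid morphisms dualizes to the composite of the associated relations; here I would invoke the proposition that the clean composition of two Nambu-Poisson relations is again a Nambu-Poisson relation, and match $Gr$ of the composed subalgebroid with the annihilator of the composite relation.

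The main obstacle I anticipate is the bookkeeping of duals and signs: establishing rigorously that the dual of the sign-twisted algebroid $E_1^{-(n-1)}$ carries exactly $(-1)^{n-1}\pi_1$, and that the annihilator of $Gr(\Phi_E)$ reproduces the graph of the dual comorphism with the correct sign in the second slot. A second delicate point is checking that the fibrewise coisotropy criterion of Proposition \ref{pro2.7} applies to the product algebroid taken over the graph $Gr(\phi)$ rather than over a single base, and that clean composition indeed holds for the relations arising from honest vector bundle morphisms, so that the functor is well defined on composites.
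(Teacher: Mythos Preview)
Your proposal is correct and follows essentially the same route as the paper: reduce to the graph via Definition~\ref{defi5.8}, apply Proposition~\ref{pro2.7} to pass from ``graph is an $n$-Lie subalgebroid'' to ``annihilator is coisotropic'', and recognize the latter as the Nambu-Poisson relation condition. The paper's proof is terser and leaves implicit the sign identification $(E_2\times E_1^{-(n-1)})^*\cong E_2^*\times (E_1^*)^{(-1)^{n-1}}$, the computation $(Gr(\Phi_E))^\circ=Gr(\Phi_{E^*})$, and the functoriality/essential surjectivity checks you spell out, but there is no substantive difference in strategy.
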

\begin{proof}
	By  the definition of $n$-Lie algebroid morphisms, we have that $\Phi_{E}$ is an $n$-Lie algebroid morphism if and only if its graph is an $n$-Lie subalgebroid. By Proposition \ref{pro2.7} and the definition of Nambu-Poisson relations, we conclude that  this is the case if and only if the dual comorphism $\Phi_{E^*}: E_1^* \dashrightarrow E_2^*$ is a Nambu-Poisson relation.
\end{proof}

\bibliographystyle{plain}

\begin{thebibliography}{50}	
	\bibitem{ref1}
	D. Alekseevsky and P. Guha,
	On decomposability of Nambu-Poisson tensor,
	\emph{Acta. Math. Univ. Comenian.} 65(1996), 1-9.
	
	\bibitem{ref2}
	J.~Arnlind, A. Makhlouf and S. Silvestrov,
	Construction of $n$-{Lie} algebras and $n$-ary {Hom}-{Nambu}-{Lie} algebras,
	\emph{J. Math. Phys.} 52(2011), 123502.	
	
	\bibitem{ref3}
	S.~Basu, S.~Basu, and A. Das, et al,
	Nambu structures and associated bialgebroids,
	\emph{Proc. Math. Sci.} 129(2019), 1-36.
	
	\bibitem{BC}
	Y.~Bi, Z.~C, and Z. C, et al,
	The geometric constraints on Filippov  algebroids,
	preprint, arXiv:2309.04180.	
	
	\bibitem{ref4}
	A. S. Cattaneo and M. Zambon,
	Coisotropic embeddings in {Poisson} manifolds,
	\emph{Trans. Amer. Math. Soc.} 361(7)(2009), 3721-3746.
	
	\bibitem{ref5}
	Z.~Chen and Z. J.~Liu,
	On (co-)morphisms of Lie pseudoalgebras and groupoids,
	\emph{J. Algebra} 316(1)(2007),  1-31.	
	
	\bibitem{ref6}
	M.~Crainic, R. L.~Fernande and L.~M\v{a}rcut,
	Lecture on Poisson {Geometry}, Graduate Studies in Mathematics Vol. 217,
	{Amer. Math. Soc.} 2021.	
	
	\bibitem{ref7}
	Y. L. Daletskii and L. A. Takhtajan,
	Leibniz and Lie algebra structures for {Nambu} algebra,
	\emph{Lett. Math. Phys.} 39(1997), 127-141.	
	\bibitem{ref8}
	A.~Das,
	Multiplicative {Nambu} structures on {Lie} groupoids,
	\emph{J. Ram. Math. Soc.} 35(3)(2020), 277-298
	\bibitem{ref9}
	V. T.~Filippov,
	$n$-Lie algebras,
	\emph{Siberian Math. J.} 26(1985), 879-891.	
	\bibitem{ref10}
	J.~Grabowski and G.~Marmo,
	On {Filippov} algebroids and multiplicative {Nambu}-{Poisson} structures,
	\emph{Differ. Geom. Appl.} 12(1)(2000), 35-50.	
	\bibitem{ref11}
	A. B.~Hassine, T.~Chtioui, M.~Elhamdadi and S. ~Mabrouk,
	Extensions and crossed modules of $n$-{Lie} {Rinehart} {algebras},
	\emph{Adv. Appl. Clifford Alg.} 32(2)(2022), 31.	
	\bibitem{ref12}
	L.~G. He, Z.~J. Liu and  D.~S. Zhong,
	Poisson actions and Lie bialgebroid morphisms,
	\emph{Contemp. Math.} 315(2002), 235-244.	
	\bibitem{ref13}
	R.~Hermann,
	\emph{Vector Bundles in Mathematical Physics}, New York: Addison Wesley Publishing Company, 1970.	
	\bibitem{ref14}
	P. J. Higgins and K.~Mackenzie,
	Algebraic constructions in the category of Lie algebroids,
	\emph{J. Algebra,} 129(1)(1990), 194-230.	
	\bibitem{ref15}
	P. J. Higgins and K.~Mackenzie,
	Duality for base-changing morphisms of vector bundles, modules, {Lie} algebroids and {Poisson} structures,
	\emph{Math. Proc. Cambridge Philos. Soc.} 114(3)(1993), 471-488.	
	\bibitem{ref16}
	J.~Huebschmann,
	Lie-Rinehart algebras, {Gerstenhaber} algebras and {Batalin}-{Vilkovisky} algebras,
	\emph{Annales de l'institut Fourier,} 48(2)(1998), 425-440.	
	\bibitem{ref17}
	J.~Huebschmann,
	Duality for {Lie}-{Rinehart} algebras and the modular class, \emph{J. reine angew. Math.} 510(1999), 103-159.	
	\bibitem{ref18}
	K.~Mackenzie,
	Lie algebroids and {Lie} pseudoalgebras,
	\emph{Bull. London Math. Soc.} 27(2)(1995), 97-147.	
	\bibitem{ref19}
	K.~Mackenzie,
	\emph{General theory of Lie groupoids and Lie algebroids}, Cambridge: Cambridge University Press, 2005.		
	\bibitem{ref20}
	G.~Marmo, G.~Vilasi and A. M.~Vinogradov,
	The local structure of $n$-{Poisson} and 	$n$-{Jacobi} manifolds,
	\emph{J. Geom. Phys.} 25(1-2)(1997), 141-182.
	\bibitem{ref21}
	E.~Meinrenken,
	Introduction to {Poisson} {Geometry}, lecture notes, 2017.	
	
	\bibitem{ref22}
	N.~Nakanishi,
	On {Nambu}-{Poisson} manifolds,
	\emph{Rev. Math. Phys.} 10(1998), 499-510.	
	\bibitem{ref23}
	Y. Sheng and C. Zhu,
	Higher extensions of {Lie} algebroids,
	\emph{Commun. Contemp. Math.} 19(3)(2017), 1650034.
	
	\bibitem{ref24}
	A.~Silva and A.~Weinstein,
	Geometric models for noncommutative algebras,
	Berkeley Mathematics Lecture Notes, Vol. 10.
	Amer. Math. Soc. 1999.
	
	\bibitem{ref25}
	L. Takhtajan,
	On foundation of the generalized {Nambu} mechanics,
	\emph{Commun. Math. Phys.} 160(1994), 295-315.	
	\bibitem{ref26}
	A.~Weinstein,
	Coisotropic calculus and {Poisson} groupoids,
	\emph{J. Math. Soc. Jpn.} 40(4)(1988), 705-727.			
\end{thebibliography}

\end{document}